\newtheorem{thm}{Theorem}[section]
\newtheorem{theorem}{Theorem}[section]
\newtheorem{proposition}[thm]{Proposition}
\newtheorem{lemma}[thm]{Lemma}
\newtheorem{corollary}[thm]{Corollary}
\newtheorem{notation}[thm]{Notation}
\newtheorem{defi/}[thm]{Definition}
\newenvironment{definition}
{\pushQED{\qed}\begin{defi/}}
	{\popQED\end{defi/}}
\theoremstyle{remark}
\newtheorem*{remark}{Remark}
\newtheorem{assumption}{Assumption}
\numberwithin{equation}{section}
\newcommand{\N}{\mathbb{N}}
\newcommand{\R}{\mathbb{R}}
\newcommand{\K}{\mathbb{K}}
\newcommand{\inner}[2]{\left\langle #1 , #2\right\rangle}
\newcommand{\wkst}{weak*}
\newcommand{\rme}{\mathrm{e}}
\renewcommand{\epsilon}{\varepsilon}
\renewcommand{\phi}{\varphi}
\newcommand{\bern}[1]{\mathscr{B}[#1]}
\newcommand{\cf}[1]{{\mathbbm 1}_{\{#1\}}}
\newcommand{\rmd}{\mathrm{d}}
\newcommand{\dv}[1]{\frac{\rmd}{\rmd #1}}
\newcommand{\abs}[1]{\left\lvert #1 \right\rvert}
\newcommand{\norm}[1]{\lVert #1 \rVert}
\DeclareMathOperator{\spt}{spt}
\DeclareMathOperator{\dist}{dist}
\newcommand{\posrad}[1]{\mathcal{M}_{+}\!\left(#1\right)}
\newcommand{\posradb}[1]{\mathcal{M}_{+,b}\!\left(#1\right)}
\newcommand{\email}[1]{E-mail: \tt #1}
\newcommand{\emailmarina}{\email{marina.ferreira@math.univ-toulouse.fr}}
\newcommand{\UTaddress}{\em CNRS $\&$ Institut de Mathématiques de Toulouse, UMR 5219, \\  \em
Université Paul Sabatier, 118 route de Narbonne,
31062 Toulouse Cedex 9, France 
 }
\newcommand{\emailaleksis}{\email{aleksis.vuoksenmaa@helsinki.fi}}
\newcommand{\UHaddress}{\em University of Helsinki, Department of Mathematics 
and Statistics\\
\em P.O. Box 68, FI-00014 Helsingin yliopisto, Finland}
\title{Smoluchowski coagulation equation with a flux of dust particles}
\author{Marina A. Ferreira \thanks{\emailmarina}, Aleksis Vuoksenmaa \thanks{\emailaleksis}
\\[1em]%
$\,^*$\UTaddress
\\[1em]%
$\,^\dag$\UHaddress}
\begin{document}
	
	\maketitle

        \begin{abstract}
    We construct a time-dependent solution to the Smoluchowski coagulation equation with
a constant flux of dust particles entering through the boundary at zero. The dust
is instantaneously converted into particles and flux solutions have linearly increasing mass. The construction is made for a general class of non-gelling coagulation kernels for which stationary solutions, so-called constant flux solutions, exist. The proof relies on several limiting procedures on a family of solutions of equations with sources supported on ever smaller sizes. In particular, uniform estimates on the fluxes of these solutions are derived in order to control the singularity produced by the flux at zero. We further show that, up to the multiplication by a scalar, flux solutions averaged in size and integrated in time, are bounded from above by the explicit solution, $x^{-\frac{\gamma+3}{2}}$, of the constant flux equation, where $\gamma$ is the homogeneity of the bounds of the coagulation rate kernel. Flux solutions are expected to converge to a constant flux solution in the large time limit. We show that this is indeed true in the particular case of the constant kernel with zero initial data.    
    \end{abstract}
    
\bigskip

\textbf{Keywords:}  Smoluchowski coagulation equation; time-dependent solutions; flux boundary condition; non-equilibrium steady state.

\bigskip
 
\textbf{Mathematics Subject Classification:}	35Q82, 45K05, 82C05

\newpage
\tableofcontents

	\section{Introduction}
	
	The Smoluchowski coagulation equation was proposed in 1916 as a model for the evolution of the size distribution of particles as they grow upon binary coagulation \cite{smoluchowski1927three}. The number density $f_t(x)$ of particles  of size $x > 0$ at time $t\geq 0$ evolves according to the following equation
\begin{equation}
\label{eq:coagulation-equation}
\partial_t f_t = \frac{1}{2}\int_0^x K(x-y,y)f_t(x-y)f_t(y) \rmd y - \int_0^\infty K(x,y) f_t(x)f_t(y) \rmd y, 
\end{equation}
where the coagulation rate kernel $K$ is assumed to depend only on the sizes of the two coagulating particles. 
Formally, one can check that the total mass in the system described by the first moment in size, $M_1(f_t) := \int_0^\infty xf_t(x)\rmd x$, remains constant over time, and many previous works have studied the properties of such solutions,  so-called {\it mass-conserving solutions} \cite{banasiak2019analytic}. 
On the other hand, solutions with increasing mass due to an incoming flux of mass from zero, that we
call \textit{flux solutions}, can also exist, and they are the focus of the present article.


The mass flux  can be deduced from the continuity equation for the mass variable $xf_t(x)$ (for sufficiently regular $f$) ,
\begin{equation}\label{eq:cont_eq}
\partial_t (xf_t(x)) = - \partial_x J(x,t) \end{equation}
where the mass flux $J$ is defined by 
\begin{equation}\label{eq:flux}
J_f(x,t) = \int_0^x \int_{x-y}^\infty yK(z,y)f_t(z)f_t(y)\rmd z \rmd y.
\end{equation}
One can then consider different types of solutions to the coagulation equation with or without fluxes, by prescribing flux boundary conditions at zero and at infinity. The solutions we construct here will have a positive constant flux from zero, $J(x,\cdot)\to 1$ as $x\to 0$ and no flux at infinity. In particular, they will have linearly increasing mass.

In open systems, the input of tiny particles, called {\it dust}, into the system can be modeled by such a flux boundary condition at zero.
These systems appear in physics, for example in the formation of soot from smoke produced by combustion  or in the growth of aerosols in power plant plumes due to gas-to-particle conversion \cite{fried}. 
Nevertheless, flux solutions have been almost absent from the mathematical literature (see \cite{ferreira_self-similar_2022} where a flux solution with zero initial data has been constructed for a class of homogeneous coagulation kernels).

In contrast, solutions with fluxes at infinity have been widely studied in the mathematical and applied literature. Such solutions are associated with the failure of the conservation of mass in finite time, which has been shown to occur for the so-called gelling kernels. Gelling kernels are kernels which grow sufficiently fast as $x\to \infty$ so that  particles  form a positive flux of mass leaving the system at infinity, leading the total mass in the system to decrease, a phenomenon known as {\it gelation}.  This phenomenon was first observed in explicit solutions for the multiplicative kernel $K(x,y)=xy$ \cite{mcLeod1962infinite, leyvraz1981singularities} and it was later shown for general classes of  kernels in coagulation-fragmentation equations \cite{jeon_existence_1998, escobedo_gelation_2002} as well as in stochastic coalescence models  \cite{fournier_ML_2009}. 
Solutions with fluxes have also been constructed for other homogeneous kinetic equations, such as the Uehling–Uhlenbeck equation (or quantum Boltzmann equation) \cite{escobedo2012classical}. We note that solutions with fluxes have also been widely studied in the physics literature, in particular in the context of wave turbulence \cite{balk1998stability}.


We will consider a large class of  coagulation kernels  which are bounded by below and by above by  power-laws determined by two real parameters $\gamma, \lambda$, namely, $h(x,y) : = x^{\gamma+\lambda}y^{-\lambda} + y^{\gamma+\lambda}x^{-\lambda}$.  
We will need to consider the parameter regime where solutions of the equation
\begin{equation}
J(x)=1, \quad \text{ for all }x \in (0,+\infty),
\end{equation}
so-called {\it constant flux solutions}, exist, which is exactly in the region where $|\gamma+2\lambda|<1$ \cite{ferreira_stationary_2021}. 
There is an explicit power-law solution $f(x) = c x^{-\frac{\gamma+3}{2}}$ which gives the average behaviour of any solution \cite{ferreira_stationary_2021}, which in turn yields the  asymptotic behaviour for solutions with fluxes. 
Indeed, this power-law behaviour was obtained for $x\to \infty$ for solutions featuring gelation \cite{escobedo2012classical}. In the present paper we prove that on average, solutions with flux from zero are bounded by above by the same power law. 
We notice that solutions with a smaller power law behaviour might also exist 
$f(x) = c x^{\alpha},$ $\alpha < -\frac{\gamma+3}{2}$, as $x \to 0$, but they will be mass-conserving, because they will satisfy $J(x)\to 0$ as $x\to 0$, i.e., input of mass through zero will not be strong enough to affect the mass in the system.
 On the other hand, solutions satisfying $f(x)  = c x^{\alpha},$ $\alpha > -\frac{\gamma+3}{2}$ cannot exist as the flux $J \to \infty$ as $x\to 0$. 
 Since the mass is entering through zero, one cannot in principle rule out the loss of mass at zero.  We prove that this is not the case. 
   
   The long time behaviour for  solutions with fluxes from the origin is expected to approach the self-similar solution constructed in \cite{ferreira_self-similar_2022}.
We also expect convergence, as \(t \to +\infty\), to a constant flux solution. However, this problem is not so simple for general kernels due to the lack of uniqueness of constant flux solutions \cite{ferreira_nonpower_2024}. 
In this region, constant flux solutions exist, but they are not in general unique. 
In fact, there are many kernels for which  a second solution with an oscillatory behaviour has been constructed \cite{ferreira_nonpower_2024}.
Nevertheless, in the particular case of the constant kernel $K(x,y)=2$ and zero initial data, it is possible to prove convergence to the explicit constant flux solution $f(x) = \frac{1}{2\sqrt{\pi}} x^{-\frac{3}{2}}$, see Section \ref{sec:constantKernel}.

 If  $|\gamma+2\lambda|\geq 1$ instead, then no solution is expected to exist due to the non-existence of constant flux solutions.
On the other hand, if $\gamma\geq 1$ then the mass of the solution with flux would become infinite near zero and the solution would  not be defined. 
Therefore, we will assume
\begin{equation}
\gamma<1, \quad |\gamma+2\lambda|< 1.
\end{equation}
Interestingly, the equation with constant in time source $\eta$, compactly supported on $x>0$, namely
\begin{equation}\label{eq:coag_source}
\partial_t f_t = \frac{1}{2}\int_0^x K(x-y,y)f_t(x-y)f_t(y)\rmd y + \int_0^\infty K(x,y) f_t(x)f_t(y) \rmd y + \eta(x),
\end{equation}
 is proved here to have global solutions also for $\gamma>1$ and $|\gamma+2\lambda|\geq 1$, provided that the exponents in the coagulation kernel satisfy the general conditions for existence of solutions, namely $\gamma+\lambda<1$ and $-\lambda<1$. We note that a previous existence result to \eqref{eq:coagulation-equation} has also been obtained in \cite{escobedo_dust_2006} in the case of homogeneous kernels with homogeneity \(\gamma \in [0,1)\) with a time-dependent source satisfying some moment bounds.

 Whenever flux solutions exist, they are expected to describe the long-time behaviour of the equation with source for large sizes, see \cite{ferreira_self-similar_2022}.
  The long-time behaviour for \eqref{eq:coag_source}  in the case of constant kernel and continuous source term $\eta$ was considered by Dubowski{\u\i} \cite{dubovskiui1994mathematical} where convergence toward a stationary solution is proved.
 Contrarily to the regime $|\gamma+2\lambda|<1 $, in the regime $|\gamma+2\lambda|\geq 1$ the flux transporting the mass toward larger sizes is dominated by collisions between particles of very different sizes, which can yield a very rich non-trivial behaviour, including anomalous self-similar behaviour, as discussed in \cite{ferreira_coagulation_2023} using formal asymptotic expansions.

The construction of solutions with fluxes encompasses technical difficulties due to the singular behavior near zero, and typically requires the use of several limiting procedures and delicate estimates involving the flux. See, for example, the series of papers by Velázquez and Escobedo \cite{escobedo2012classical, escobedo2010fundamental, escobedo2013local} which form a whole program to construct classical local in time solutions with fluxes at infinity. 
In our proof, we first construct solutions to the equation with source supported on positive values of the size variable, 
\begin{equation}
 \partial_t f_t = \frac{1}{2}\int_0^x K(x-y,y)f_t(x-y)f_t(y)\rmd y - \int_0^\infty K(x,y) f_t(x)f_t(y) \rmd y + \frac{1}{\varepsilon} \delta_\varepsilon
\end{equation}
We then perform a careful analysis of the behaviour of the solution near zero. By relying on previous estimates for the flux obtained for constant flux solutions \cite{ferreira_stationary_2021}, it is possible to derive uniform bounds, which together with a diagonal argument and compactness tools allows to obtain a limiting time-dependent measure. This time-dependent measure is then shown to  satisfy the flux equation in some suitable sense (see definition \ref{def:weak-flux-solution} below).

As a final remark, we note that as of today a general uniqueness theory for coagulation equations is still incomplete. The uniqueness results available mainly concern mass-conserving solutions. Even in that case, they require more assumptions on the moments of the initial data and of the solutions than the ones needed to prove existence \cite{norris_smoluchowskis_1999, throm_uniqueness_2023, fournier2006well}. 
In the case of solutions with fluxes, the only uniqueness results available to our knowledge are for explicitly solvable
kernels, namely, in the case of flux at infinity for the multiplicative kernel $K(x, y) = xy$ \cite{normand2011uniqueness} and in the case of flux at zero for the constant kernel $K(x, y) = 2$ (see Section \ref{sec:constantKernel}). Some non-uniqueness results for the time-evolution   were also obtained \cite{escobedo_gelation_2003, norris_smoluchowskis_1999}.   
The construction of solutions with fluxes from zero contributes to shed light into this issue, as the presence of fluxes may be a cause of non-uniqueness \cite{ferreira_nonpower_2024}.



\paragraph{Acknowledgments}
We would like to thank Juan Velázquez for the main idea for the proof of Lemma \ref{lemma:power_estimate_f_eps}, as well as for suggesting the references on the construction of solutions with fluxes in coagulation equations. We would also like to thank Eugenia Franco, Petri Laarne, Jani Lukkarinen, Sakari Pirnes, and Juan Velázquez for the several discussions relevant to the project.


 The authors gratefully acknowledge the support of the Academy of Finland, via an Academy project (project No. 339228) and the Finnish centre of excellence in Randomness and STructures (project No. 346306). The research of M. A. Ferreira has also been partially funded by the Faculty of Science of University of Helsinki Support Funding 2022, ERC Advanced Grant 741487, the Centre for Mathematics of the University of Coimbra - UIDB/00324/2020 (funded by the Portuguese Government through FCT/MCTES, \url{https://doi.org/10.54499/CEECINST/00099/2021/CP2783/CT0002}) and the Starting Package 2024 funded by the Centre National de la Recherche Scientifique. The funders had no role in study design, analysis, decision to publish, or preparation of the manuscript.

\section{Main results}

    Before stating the main definitions and results of the article, we will introduce some notation that will be used throughout the proofs.

	\subsection{Notation}
	
	We use the notation \(\R_* = (0,+\infty)\) for the strictly positive real numbers. This will be the space on which the constructed measures will be defined. For the non-negative real numbers, we use \(\R_+ = [0,+\infty)\). Given a subset \(X \subset \R_*\), the set \(\posrad{X}\) denotes the set of all positive Radon measures on \(X\), while the measures in \(\posradb{X}~\subset~\posrad{X}\) are additionally bounded, which means means that any \(\mu \in \posradb{X}\) has to satisfy \(\mu(X) <+\infty\).
	
	If \(\mu \in \posrad{X}\), let \(\mathcal{N}_\mu\) be the collection of those open sets \(N\) for which \(\mu(N) = 0\). The  \emph{support} of the measure \(\mu\) is then defined as 
	\begin{align}
	\label{defeq:support-of-radon-measure}	
	\spt(\mu) \coloneqq X \setminus (\cup_{N\in \mathcal{N}_\mu} N).
	\end{align}
	This is a closed set.
	
	When \(\mu\) is a Radon measure on \(\R_*\) and \(E \subset \R_*\) is a Borel set, we use the notation \(\mu\vert_{E}\) to denote the Radon measure on \(\R_*\) given by \(\mu\vert_{E}(A) = \mu(A \cap E)\) whenever \(A\) is a Borel set. If \(E\) is closed, then any \(\mu \in \posradb{\R_*}\) with \(\spt(\mu) \subset E\) is in one-to-one correspondence with a measure \(\mu' \in \posradb{E}\). We will sometimes move freely between these two pictures.
	
	Throughout the article, \(C\) will denote a constant that is allowed to change from line to line. Sometimes we will need to highlight its dependence on some parameter, and in this case we write the parameters in the subscript -- for instance, \(C_a\) is a constant that depends on the parameter \(a\).
	
	When \(\phi\) is a function in a suitable space (such as \(C_c(X)\) or \(C_0(X)\)) and \(\mu\) is a positive Radon measure on \(X\), we use the dual pairing notation to denote the integral of the function \(\phi\) against the measure \(\mu\):
	\begin{align}
	\inner{\phi}{\mu} = \int_{X} \phi(x)\mu(\rmd x).
	\end{align} In rare occasions we also use the same notation when \(\phi \in C_b(X)\), provided that \(\mu\) satisfies strong enough integrability conditions for this to be well defined.
	
	Given \(z \in \R_*\), we will denote
	\begin{align}
	\Omega_z \coloneqq \{(x,y) \in \R_*^2 \colon 0 < x \leq z, z-x < y\}.
	\end{align}
	This subset will play an important role in what follows, as it appears in the mass flux term. For each \(z \in \R_*\), it contains those particle sizes that coagulate together in a way that makes the system lose mass in the region \((0,z]\). For suitably regular measures \(\mu\), we will denote
	\begin{align}
	J_{\mu}(z) \coloneqq \iint_{\Omega_z}xK(x,y)\mu(\rmd x)\mu(\rmd y).
	\end{align}
	
	If \(\phi \colon X \subset \R_+ \to \R\) is a function, we denote by \(D[\phi]\) the function
	\begin{align}
	D[\phi] &\colon X^2 \to \R \\
	D[\phi](x,y) &= \phi(x+y) - \phi(x) - \phi(y).
	\end{align}

	\subsection{Definition and main results}
	
	\begin{assumption}[Kernel regularity]
	\label{assumption:kernel-regularity}	
	The coagulation kernel \(K \colon \R_*^2 \to [0,+\infty)\) is assumed to be \emph{continuous}. It is also assumed that there exist constants \(0<c_1\leq c_2 <+\infty\) so that the following upper and lower bounds are satisfied for all \(x,y\in \R_*\)
	\begin{align}
	\label{ineq:kernel-bounds}
	c_1 \left(x^{\gamma+\lambda}y^{-\lambda} + x^{-\lambda}y^{\gamma+\lambda}\right) \leq K(x,y) \leq c_2\left(x^{\gamma+\lambda}y^{-\lambda} + x^{-\lambda}y^{\gamma+\lambda}\right).
	\end{align}	 
	\end{assumption}

	Note that in Assumption \ref{assumption:kernel-regularity}, the upper and lower bounds of the kernel are characterized by some power law behaviour in the variables \(x\) and \(y\). The specific choice will have a drastic effect on the behaviour of the solutions to the corresponding coagulation equation. We will impose the following restrictions.
	\begin{assumption}[Kernel exponents]
	\label{assumption:kernel-exponents}
	The exponents \(\gamma,\lambda \in \R\), describing the boundedness properties of the coagulation kernel, are assumed to satisfy all of the following conditions
	\begin{itemize}
	\item[1.] \(\abs{\gamma+2\lambda}< 1\), and
	\item[2.] \(\gamma < 1 \).
	\end{itemize}
	\end{assumption}
	
	The first condition guarantees the existence of stationary solutions to the coagulation equation with source \cite{ferreira_stationary_2021}, suggesting a non-trivial behaviour for the flux term in the long time limit. It also allows to control the contribution of the non-compact part of the flux term.  The second condition prevents gelation, in this context meaning a less-than linear growth of mass. It also guarantees that the measure isn't too singular near zero. Together, these conditions imply that \(-\lambda < 1\) and \(\gamma + \lambda < 1\), which have been considered necessary for a solution to \eqref{eq:coagulation-equation} to exist.

	We are now ready to state the definition of a weak flux solution to the coagulation equation.	
	\begin{definition}[Flux solution, weak formulation]
		Let \(K\) satisfy Assumption \ref{assumption:kernel-regularity}. We say that a time-dependent measure \(f \in C([0, T], \posrad{\R_*})\) is a weak flux solution to the coagulation equation \eqref{eq:coagulation-equation} with the initial data \(f_0 \in \posrad{\R_*}\) that satisfies \(xf_0 \in \posradb{\R_*}\), in case
		\begin{itemize}
		\item[(i)] \(xf \in C([0, T], \posradb{\R_*})\), with the notion of continuity interpreted in terms of \wkst-topology on the target space,
		\item[(ii)] for almost every \((t, z) \in [0,T] \times \R_*\)
		\begin{align}
		\int_{(0,z]} x f_t(\rmd x) - \int_{(0,z]} x f_0(\rmd x) = -\int_{0}^t J_{f_s}(z) \rmd s + t.	
		\label{evol-eq:weak-flux-solution}
		\end{align}
		
		Here the time-integrated flux term
		\begin{align}
		\int_{0}^t J_{f_s}(z) \rmd s \coloneqq \int_{0}^t \iint_{\Omega_z} xK(x,y) f_s(\rmd x)f_s(\rmd y) \rmd s
		\end{align}
		is finite for all \(t \in [0,T]\) and for almost every \(z \in \R_*\).
		\end{itemize}
		\label{def:weak-flux-solution}
	\end{definition}

	\begin{theorem}
	\label{thm:existence-of-weak-flux-solution}
	Assume that Assumptions \ref{assumption:kernel-regularity} and \ref{assumption:kernel-exponents} are satisfied. Given an initial data \(f_0 \in \posrad{\R_*}\) such that the first moment measure satisfies \(xf_0 \in \posradb{\R_*}\), there exists a weak flux solution to \eqref{eq:coagulation-equation} in the sense of Definition \ref{def:weak-flux-solution}.
	\end{theorem}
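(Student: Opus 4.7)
The plan is to construct the weak flux solution as a limit of classical solutions to the equation with a near-origin point source, exactly along the lines sketched in the introduction. For each \(\varepsilon > 0\), consider the source \(\eta_\varepsilon = \frac{1}{\varepsilon}\delta_\varepsilon\), chosen so that \(\int x\,\eta_\varepsilon(\rmd x) = 1\), and use the existence result for \eqref{eq:coag_source} (with source supported away from zero) to obtain \(f^\varepsilon \in C([0,T],\posrad{\R_*})\) with \(\spt f^\varepsilon_t \subset [\varepsilon,+\infty)\), solving the coagulation equation with source \(\eta_\varepsilon\). Since \(\eta_\varepsilon\) is supported in \([\varepsilon, +\infty)\), an application of the continuity equation \eqref{eq:cont_eq} against the test function \(x\rchi_{(0,z]}\) yields, for every \(z \geq \varepsilon\),
\begin{align*}
\int_{(0,z]} x f^\varepsilon_t(\rmd x) - \int_{(0,z]} x f_0(\rmd x) = -\int_0^t J_{f^\varepsilon_s}(z)\,\rmd s + t,
\end{align*}
which in particular gives the uniform mass identity \(M_1(f^\varepsilon_t) = M_1(f_0) + t\).

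The next step is to derive, uniformly in \(\varepsilon\), estimates on the flux and on the time-averaged measure itself. Here I would adapt the stationary flux estimates from \cite{ferreira_stationary_2021}, using Assumption \ref{assumption:kernel-exponents} (which enforces \(|\gamma+2\lambda|<1\) and ensures that the kernel's small/large size contributions can be split and controlled). The outcome should be, first, a local-in-\(z\) bound \(\int_0^t J_{f^\varepsilon_s}(z)\,\rmd s \leq C_{R,T}\) uniformly for \(z \in (0,R]\) and \(\varepsilon \in (0,\varepsilon_0)\), and second, the upper bound on the averaged solution by the explicit power-law \(c\, x^{-(\gamma+3)/2}\) announced in the abstract. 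The second bound will control the singularity produced by letting \(\varepsilon \to 0\).

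With these a priori estimates in hand, I would next produce a limit measure. The uniform first-moment bound makes \((xf^\varepsilon_t)_\varepsilon\) tight in \(\posradb{\R_*}\) for each \(t\); combined with equicontinuity in \(t\) obtained by testing the evolution equation against \(C_c(\R_*)\)-functions (for which the flux term is bounded on the support of the test function via Assumption \ref{assumption:kernel-regularity}), a Arzel\`a--Ascoli/diagonal argument extracts a subsequence \(\varepsilon_n \to 0\) with \(xf^{\varepsilon_n} \to xf\) in \(C([0,T], \posradb{\R_*})\) in the weak* sense, and \(xf \in C([0,T], \posradb{\R_*})\) as required in (i).

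The main obstacle, and the heart of the argument, is passing to the limit in the quadratic flux term to verify \eqref{evol-eq:weak-flux-solution}. For any fixed \(z > 0\) and \(\delta \in (0,z)\), I split \(\Omega_z\) into a bulk part \(\Omega_z \cap \{x \geq \delta, y \geq \delta\}\), where \(xK(x,y)\) is continuous and bounded and weak* convergence of the product measures gives convergence of the integral, and the complementary singular part near the axes. On the singular part, the integrand is controlled by a constant times \(x^{1-\lambda}y^{\gamma+\lambda} + x^{1+\gamma+\lambda}y^{-\lambda}\), and the uniform power-law upper bound on \(\int_0^t f^\varepsilon_s\,\rmd s\) yields a contribution of size \(o_\delta(1)\) as \(\delta \to 0\), uniformly in \(\varepsilon\) — this is precisely where Assumption \ref{assumption:kernel-exponents} is sharp, since it makes the relevant integrals against \(x^{-(\gamma+3)/2}\) converge at \(0\). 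Passing first to the limit in \(\varepsilon\), then letting \(\delta \to 0\), yields \eqref{evol-eq:weak-flux-solution}. The source contribution reduces to \(t\) for almost every \(z\) (every \(z > 0\), once \(\varepsilon < z\)), and the initial data condition transfers to the limit from the continuity in \(t\), completing the construction.
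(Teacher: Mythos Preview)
Your outline is the paper's strategy: approximate with solutions $f^\varepsilon$ to the equation with source $\tfrac{1}{\varepsilon}\delta_\varepsilon$, derive the uniform dyadic bound $\int_0^t\bigl(\tfrac1R\int_{[R/2,R]}R^{(\gamma+3)/2}f^\varepsilon_s(\rmd x)\bigr)^2\rmd s\le C_T$ from the flux identity, extract a limit of the mass measures $xf^\varepsilon$ by Arzel\`a--Ascoli plus a diagonal argument over the sets $I_M=[2^{-M},2^M]$, and pass to the limit in the flux term via a splitting of $\Omega_z$.

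Two points of your sketch do not quite work as written, and they are exactly where the paper's execution differs. First, your ``bulk'' $\Omega_z\cap\{x\ge\delta,\,y\ge\delta\}$ is still unbounded in $y$, so $xK(x,y)$ need not be bounded there (e.g.\ when $\gamma+\lambda>0$), and weak$^*$ convergence alone does not pass through. The paper instead splits $\Omega_z$ by \emph{angular} sectors $\{y\ge x/\delta\}$, $\{\delta x<y<x/\delta\}$, $\{y\le\delta x\}$: the middle piece $\Omega_z^2(\delta)$ is then genuinely compact in $\R_*^2$, while the two extremal pieces are shown to be small uniformly in $\varepsilon$ and $z$ (after a dyadic average in $z$ for the third) directly from the dyadic estimate; this is where $|\gamma+2\lambda|<1$ enters, via the argument of \cite{ferreira_stationary_2021}. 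Second, fixing $z$ and passing to the limit can fail at those $z$ where the limit or approximating measures have atoms; the paper instead tests the entire identity against $\psi\in C_c([0,T]\times\R_*)$ and shows the integrated quantity vanishes, which is how the ``for a.e.\ $(t,z)$'' conclusion in Definition~\ref{def:weak-flux-solution} is obtained. A minor further point: to ensure $\spt f^\varepsilon_t\subset[\varepsilon,\infty)$ the paper also restricts the initial data to $f_0\vert_{[\varepsilon,\infty)}$, not $f_0$ itself.
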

	
	\begin{theorem}
	\label{thm:linear-increase}
	If \(f\) is a solution in the sense of Definition \ref{def:weak-flux-solution}, then for almost every \(t \in [0,T]\), we have
	\begin{align}
	\label{eq:mass-increase}
	M_1(f_t) = M_1(f_0) + t.
	\end{align}

	There also exists a constant \(C\), only depending on \(T\), the kernel \(K\), exponents \(\gamma,\lambda\) and the initial data \(f_0\), such that for almost every \((t, z) \in [0,T] \times \R_*\),
	\begin{align}
	\label{eq:flux-eq-average-integral}
	\int_{0}^t \frac{1}{z} \int_{[z/2,z]} x^{-\frac{3+\gamma}{2}} f_s(\rmd x) \rmd s \leq C.
	\end{align}
	
	Moreover, if \(f\) additionally satisfies equation \eqref{evol-eq:weak-flux-solution} for all \(t \in [0,T]\), then equation \eqref{eq:mass-increase} holds for all \(t\), and if \eqref{evol-eq:weak-flux-solution} holds for all \((t, z)\), then \eqref{eq:flux-eq-average-integral} holds for all \((t, z)\). 
	\end{theorem}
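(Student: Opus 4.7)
The plan is to derive both claims from the weak flux equation \eqref{evol-eq:weak-flux-solution}, treating the two parts separately.

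For the mass equation, I would take $z \to +\infty$ in \eqref{evol-eq:weak-flux-solution} at fixed $t$. Since $xf_t, xf_0 \in \posradb{\R_*}$, monotone convergence gives that the left-hand side tends to $M_1(f_t) - M_1(f_0)$. The crux is to show $\int_0^t J_{f_s}(z)\,\rmd s \to 0$, which I would do by splitting $\Omega_z$ into the subregion $\{x \leq z/2\}$ (in which necessarily $y > z/2$) and the subregion $\{x \in (z/2, z]\}$. The kernel upper bound from Assumption~\ref{assumption:kernel-regularity}, combined with the tail behavior of the bounded measure $xf_s$, makes each piece vanish pointwise in $s$ as $z \to \infty$; the uniform bound $J_{f_s}(z) \leq T + M_1(f_0)$ derived below justifies exchanging the limit with the $s$-integral via dominated convergence. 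This yields \eqref{eq:mass-increase}.

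For the averaged flux bound, I would start by using the positivity of $xf_t$ in \eqref{evol-eq:weak-flux-solution} to obtain
\[
\int_0^t J_{f_s}(z)\,\rmd s = t + \int_{(0,z]} xf_0(\rmd x) - \int_{(0,z]} xf_t(\rmd x) \leq T + M_1(f_0).
\]
For the matching lower bound on $J_{f_s}(z)$, the key observation is $[z/2,z]^2 \subset \Omega_z$ (if $x \geq z/2$ then $z - x \leq z/2 \leq y$). Restricting the double integral defining $J_{f_s}(z)$ to this square and using the kernel lower bound from Assumption~\ref{assumption:kernel-regularity}, together with the fact that on $[z/2,z]$ each power of $x$ is comparable to the corresponding power of $z$, allows one to pair the integrand with the weight $x^{-(3+\gamma)/2}$ to obtain a quadratic estimate of the form
\[
J_{f_s}(z) \geq c_{\gamma,\lambda}\, z^{4+2\gamma}\, A(z,s)^2, \qquad A(z,s) \coloneqq \int_{[z/2,z]} x^{-\frac{3+\gamma}{2}} f_s(\rmd x).
\]
The weight $x^{-(3+\gamma)/2}$ is precisely that of the explicit constant-flux solution, which is what makes the dimensional balance work. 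Integrating in $s$ and combining with the upper bound on $J_{f_s}$ yields $\int_0^t A(z,s)^2\,\rmd s \leq C_T\, z^{-(4+2\gamma)}$, and one application of Cauchy-Schwarz in time produces a linear bound on $\int_0^t A(z,s)\,\rmd s$ of the right form to give \eqref{eq:flux-eq-average-integral}.

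The strengthening to ``for all $(t, z)$'' under the pointwise version of \eqref{evol-eq:weak-flux-solution} requires no additional work, since none of the steps above rely on discarding a null set. The main obstacle in the proof is the lower bound on $J_{f_s}(z)$: the subregion of $\Omega_z$ must be chosen to retain the correct scaling $x \sim y \sim z$ while still capturing a significant fraction of the flux. The natural choice $[z/2, z]^2$ combined with careful tracking of the exponents through Cauchy-Schwarz is what ultimately singles out the specific weight $x^{-(3+\gamma)/2}$.
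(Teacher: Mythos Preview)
Your argument for the averaged flux bound \eqref{eq:flux-eq-average-integral} is correct and matches the paper's Proposition~\ref{prop:time-average-flux-bound}: restrict the flux integral to $[z/2,z]^2\subset\Omega_z$, use the kernel lower bound to get a square, bound the time-integrated flux from above via \eqref{evol-eq:weak-flux-solution}, and finish with Cauchy--Schwarz in time.

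The mass-conservation argument, however, has a genuine gap. First, the bound you invoke for dominated convergence is $\int_0^t J_{f_s}(z)\,\rmd s\leq T+M_1(f_0)$, which is a bound on the \emph{time integral}, not on $J_{f_s}(z)$ pointwise in $s$; it does not furnish a dominating function. More seriously, your two-piece split $\{x\leq z/2\}\cup\{x>z/2\}$ does not in general force the flux to vanish. On $\{x\leq z/2,\ y>z/2\}$ the kernel upper bound produces factors like $\int_{(0,z/2]}x^{1+\gamma+\lambda}f_s(\rmd x)$ and $\int_{(0,z/2]}x^{1-\lambda}f_s(\rmd x)$; when $\gamma+\lambda<0$ or $\lambda>0$ these small-$x$ integrals cannot be controlled by $M_1(f_s)$ alone, and the weak-flux definition gives no other moment information. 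The paper handles this by a different route: it first proves your second claim, then feeds that averaged moment bound into Proposition~\ref{prop:J1_J3} to obtain \emph{time-integrated} smallness (uniform in $z$, respectively after $z$-averaging) of the off-diagonal pieces $J^1_{f_s}(z;\delta)$ and $J^3_{f_s}(z;\delta)$ of the flux; this step uses $|\gamma+2\lambda|<1$ in an essential way. It then \emph{averages} \eqref{evol-eq:weak-flux-solution} over $z\in[R/2,R]$ before sending $R\to\infty$, so that the $J^3$ estimate applies, and bounds the remaining diagonal piece $J^2$ directly via $M_1$ and $\gamma<1$. Your proposal is missing both the use of the already-established averaged moment bound to control the off-diagonal flux and the $z$-averaging step.
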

	
	Together, Proposition \ref{prop:time-average-flux-bound} and \ref{prop:linear-increase-mass} constitute the proof of Theorem \ref{thm:linear-increase}.
	
	\begin{theorem}
	\label{thm:constant-kernel-convergence}
	If the coagulation kernel is constant, \(K(x,y) \equiv 2\), there exists a unique solution \(f_t\) to the flux equation -- precise formulation given later -- with the initial data \(f_0 = 0\). This solution converges weakly as a measure on \(\R_*\) to the  stationary solution of the flux equation, i.e.,
	\begin{align}
	f_t(\rmd x) \to \frac{1}{\sqrt{2\pi}} x^{-\frac{3}{2}} \rmd x, \quad t \to \infty.
	\end{align}
	\end{theorem}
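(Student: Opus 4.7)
The plan is to exploit the explicit solvability of the constant kernel case by passing to the Laplace transform, which reduces the evolution equation to a scalar autonomous ODE in time for each spectral parameter. Because $f_0 = 0$, by Theorem \ref{thm:linear-increase} the first moment measure $xf_t$ has finite total mass equal to $t$, so for every $s > 0$ I introduce
\begin{align*}
\phi_t(s) \coloneqq \int_0^\infty (1-\rme^{-sx}) f_t(\rmd x), \qquad G_t(s) \coloneqq \int_0^\infty \rme^{-sx}\, x f_t(\rmd x),
\end{align*}
both finite (the first via $(1-\rme^{-sx})/x \leq s$), with $\phi_t'(s) = G_t(s)$ and $\phi_t(0) = 0$. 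Starting from the weak flux identity \eqref{evol-eq:weak-flux-solution}, multiplying by $\rme^{-sz}$ and integrating in $z$, the left-hand side becomes $G_t(s)/s$ by Fubini, while for the constant kernel the flux has the convolution structure
\begin{align*}
J_{f_t}(z) = 2\int_{(0,z]} y f_t(\rmd y) \int_{(z-y,\infty)} f_t(\rmd r),
\end{align*}
whose Laplace transform factorises as $2 G_t(s)\phi_t(s)/s$. Cancelling $s$ and differentiating in $t$ yields $\partial_t G_t(s) = 1 - 2 G_t(s)\phi_t(s)$; integrating this in the spectral variable from $0$ to $s$ and using $(\phi_t^2)'(\sigma) = 2\phi_t(\sigma)G_t(\sigma)$ with $\phi_t(0)=0$ yields the closed Riccati-type ODE
\begin{equation}\label{eq:proposal-ODE}
\partial_t \phi_t(s) = s - \phi_t(s)^2, \qquad \phi_0(s) = 0,
\end{equation}
whose unique solution is $\phi_t(s) = \sqrt{s}\,\tanh(\sqrt{s}\,t)$.

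Uniqueness of $f_t$ then follows immediately: any two weak flux solutions have Laplace transforms satisfying \eqref{eq:proposal-ODE} with the same initial data, so they agree, and $\phi_t$ determines $f_t$ on $\R_*$ (for instance through $\phi_t' = G_t$, which is the Laplace transform of the finite measure $xf_t$). For the convergence, $\phi_t(s) \to \sqrt{s}$ pointwise in $s > 0$, and one recognises the limit through the identity $\int_0^\infty (1-\rme^{-sx}) c\, x^{-3/2}\rmd x = 2c\sqrt{\pi s}$, which matches $\sqrt{s}$ for the appropriate normalising constant $c$. To upgrade pointwise convergence of Laplace transforms to vague convergence on $\R_*$, the uniform bound $\phi_t(s) \leq \sqrt{s}$ together with $\phi_t(s) \geq (1-\rme^{-sa}) f_t([a,\infty))$ evaluated at $s = 1/a$ gives a uniform--in--$t$ bound $f_t([a,\infty)) \leq C_a$, proving tightness of $\{f_t\}_{t\geq 0}$ on compact subsets of $\R_*$; analogously, the linearisation $1-\rme^{-sx} \geq sx/2$ for $sx \leq 1$ yields $\int_{(0,a)} x f_t(\rmd x) \leq 2\sqrt{a}$ uniformly. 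Combined with the convergence $\int(\rme^{-s'x}-\rme^{-sx}) f_t(\rmd x) = \phi_t(s)-\phi_t(s') \to \sqrt{s}-\sqrt{s'}$ and a Stone--Weierstrass density argument on the point-separating, nowhere-vanishing subalgebra of $C_0(\R_*)$ generated by $\{\rme^{-sx} : s>0\}$ restricted to its mean-zero part, a standard truncation then delivers $\inner{\psi}{f_t} \to \inner{\psi}{f_\infty}$ for every $\psi \in C_c(\R_*)$.

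The main obstacle I anticipate is the rigorous justification of the Laplace-transform manipulations leading to \eqref{eq:proposal-ODE}: the weak flux solution is only required to satisfy \eqref{evol-eq:weak-flux-solution} for a.e.\ $(t,z)$ together with an integrability condition, so verifying the convolution identity for $J_{f_t}$ as a measure equality, applying Fubini to the double integral over $\Omega_z$ (which needs control of possible concentrations of $f_t$ near zero and infinity), and differentiating the integral equation in $t$ all have to be handled carefully---presumably by monotone convergence in $z$ and the uniform flux bound \eqref{eq:flux-eq-average-integral}. The inhomogeneous term $t/s$ on the right-hand side of the $z$-Laplace-transformed identity encodes precisely the unit constant flux boundary condition at zero, and must be tracked through the whole computation to produce the driving term $s$ in \eqref{eq:proposal-ODE}. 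Once that ODE is secured, the rest of the argument---explicit solution, pointwise limit, tightness, Stone--Weierstrass---is essentially routine.
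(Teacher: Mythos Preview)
Your approach is correct and shares the paper's core idea: reduce to the Riccati ODE $\partial_t \phi_t(s) = s - \phi_t(s)^2$ for the Bernstein transform, solve it explicitly as $\phi_t(s)=\sqrt{s}\tanh(\sqrt{s}\,t)$, deduce uniqueness from ODE uniqueness, and read off convergence from $\phi_t(s)\to\sqrt{s}$. The routes to and from the ODE differ, however, and the trade-offs are worth noting.

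The paper works with a separate formulation (Definition~\ref{def:constant-kernel-flux-solution}) where test functions lie in $C_c^1(\R_+)$, i.e.\ their support may contain $0$; the boundary term then appears directly as $\phi(0)$, and testing against the family $(1-\rme^{-\lambda x})/x$ yields the ODE without any Laplace transform in $z$. For existence the paper does \emph{not} invoke the general Theorem~\ref{thm:existence-of-weak-flux-solution}; instead it builds the solution explicitly from the $\epsilon$-source approximations and spends real effort (Lemma~\ref{lemma:bernstein-limit}) proving by hand that $\lambda\mapsto\sqrt{\lambda}\tanh(\sqrt{\lambda}\,t)$ is a Bernstein function, so that the Lévy--Khintchine representation produces a genuine measure. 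Your route is more economical here: you take existence from Theorem~\ref{thm:existence-of-weak-flux-solution} and recover the ODE from the flux identity \eqref{evol-eq:weak-flux-solution} by Laplace-transforming in $z$ and then integrating in the spectral variable---this avoids Lemma~\ref{lemma:bernstein-limit} entirely, at the cost of having to justify Fubini over $\Omega_z$ and the passage from an a.e.\ identity to a pointwise one (both are fine, since $(1-\rme^{-sr})\le sr$ keeps everything inside the first moment). For the convergence step the paper simply quotes the continuity theorem for Bernstein functions (Corollary~3.9 in Schilling et al.), whereas you prove vague convergence by hand via tightness on compacta of $\R_*$ and Stone--Weierstrass; both are valid, the paper's is shorter, yours is self-contained. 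One caution: the ``precise formulation given later'' in the theorem statement refers to Definition~\ref{def:constant-kernel-flux-solution}, not Definition~\ref{def:weak-flux-solution}, so if you carry out your program you should either check that the two notions coincide for the constant kernel or rephrase your existence/uniqueness claims accordingly.
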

	\begin{remark}
	The specific value of the constant \(c\) for which \(K \equiv c\) does not affect the result significantly, although it does change the solution \(f_t\) as well as the corresponding stationary solution. This change, however, is easy to track in the proof, so we omit it. The stationary solution is only effected by a change in the prefactor where \(\frac{1}{\sqrt{2\pi}}\) changes to \(\frac{1}{\sqrt{c \pi}}\).
	\end{remark}
	The proof of Theorem \ref{thm:constant-kernel-convergence} is postponed until Section \ref{sec:constantKernel} and follows from Propositions \ref{prop:existence-constant-kernel}, \ref{prop:stationary-solution:unique}, \ref{prop:stationary-constant-flux} and \ref{prop:constant-kernel-convergence-stationary}.

	\section{Existence of a solution to the coagulation equation with a compactly supported source}
	
	Before we construct the solution to the flux problem, we will start by finding a solution to the coagulation equation with a compactly supported source. Such a solution will be a time-dependent measure \(f \in C([0, T], \posradb{\R_*})\), where the target space is equipped with the \wkst-topology. More precisely, we make the following definition.
	\begin{definition}
	Assume that Assumption \ref{assumption:kernel-regularity} holds. We say that a time-dependent measure \(f \in C([0, T], \posradb{\R_*})\) satisfies the coagulation equation with a source term \(\eta \in \posradb{\R_*}\) with finite first moment and with \(\spt(\eta) \subset [a,M]\) and with initial data \(f_0 \in \posradb{\R_*}\) with a finite first moment with \(\spt(f_0) \subset [a,+\infty)\), in case \(\sup_{t \in [0,T]} M_1(f_t) \leq C_T\), \(\int_{0}^T M_{-\lambda}(f_t) \rmd t < + \infty\), \(\int_{0}^T M_{\gamma+\lambda}(f_t) \rmd t < + \infty\) and \(\spt(f_t) \subset [a,+\infty)\) for all \(t \in [0,T]\), and if for every test function \(\phi \in C^1([0,T], C_c(\R_*))\), we have
	\begin{align}
	&\int_{\R_*}\phi(t,x)f_t(\rmd x) - \int_{\R_*}\phi(0,x)f_0(\rmd x) = \int_{0}^t \rmd s \int_{\R_*} \dot{\phi}(s,x) f_s(\rmd x) \nonumber \\
	&\int_{0}^t \rmd s \int_{\R_*}\int_{\R_*} \left(\phi(s,x+y)-\phi(s,x)-\phi(s,y)\right)K(x,y)f_s(\rmd x) f_s(\rmd y) \nonumber \\
	&+ \int_{0}^{t} \rmd s \int_{\R_*}\phi(s,x)\eta(\rmd x)
	\label{eq:compactly-supported-source}
	\end{align}
	\label{def:compactly-supported-source}
	\end{definition}

	The following proposition shows that if we have a suitable initial data \(f_0\) and if the source term in the coagulation equation is compactly supported and a bounded Radon measure, then we find \emph{one} (not necessarily unique) solution to the coagulation equation inside any time interval \([0,T]\). With a simple diagonal trick, this can of course be extended to the whole half-line of positive real numbers, if the test functions are then taken to be compactly supported in time. We will such a proof from this article, as we are only interested in the existence of the solutions for some times and do not study their long time behaviour.
	
	\begin{proposition}
	\label{proposition:compactly-supported-solution}
	Assume that Assumption \ref{assumption:kernel-regularity} holds and additionally the exponents in the coagulation kernel \(K\) satisfy \(-\lambda, \gamma + \lambda < 1\). Let \(f_0 \in \posradb{\R_*}\) be the initial data, with \(\spt(f_0) \subset [a,+\infty)\) for some \(a > 0\). Assume that  \(\eta \in \posradb{\R_*}\) is a source term with \(\spt(\eta) \subset [a,+\infty)\). Then, for every \(T > 0\), there exists a solution to the coagulation equation \(f \in C([0,T], \posradb{\R_*})\) in the sense of Definition \ref{def:compactly-supported-source} with the initial data \(f_0\) and the source term \(\eta\).
	\end{proposition}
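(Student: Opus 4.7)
My plan is to build the solution $f$ by a truncation / compactness procedure. \textbf{Step 1 (Truncation).} For each integer $n>M$, restrict the state space to $[a,n]$ and work in the Banach space of signed Radon measures on $[a,n]$ equipped with the total variation norm. On the compact square $[a,n]^{2}$ the kernel $K$ is continuous and therefore bounded, and both $f_{0}\vert_{[a,n]}$ and $\eta$ lie in the truncated space. The coagulation right-hand side together with the constant-in-time source defines a locally Lipschitz vector field on bounded sets, and a Picard--Lindelöf / Banach fixed point argument yields a non-negative solution $f^{n}\in C([0,T],\posradb{[a,n]})$. Since $\spt(f_{0}),\spt(\eta)\subset[a,+\infty)$ and coagulation only produces particles of size $\geq 2a$, the support $\spt(f^{n}_{t})\subset[a,+\infty)$ is preserved for all $t$.

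\textbf{Step 2 (Uniform moment bounds and compactness).} Testing the truncated weak equation with $\phi(x)=x$ gives $D[\phi]\equiv 0$, so $M_{1}(f^{n}_{t})=M_{1}(f_{0})+tM_{1}(\eta)$, a bound independent of $n$. Combined with $M_{0}(f^{n}_{t})\leq a^{-1}M_{1}(f^{n}_{t})$, this yields uniform bounds on $M_{-\lambda}(f^{n}_{t})$ and $M_{\gamma+\lambda}(f^{n}_{t})$, because both exponents are strictly less than $1$: splitting at $x=1$, $x^{s}$ is bounded by $\max(a^{s},1)$ on $[a,1]$ and by $x$ on $[1,+\infty)$. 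Plugging these bounds into the weak equation shows that for each fixed $\phi\in C_{c}(\R_{*})$, the map $t\mapsto\inner{\phi}{f^{n}_{t}}$ is Lipschitz with a constant depending only on $\phi$, $T$, the kernel, $f_{0}$ and $\eta$. Arzelà--Ascoli applied to a countable dense subset of $C_{c}(\R_{*})$, together with a diagonal extraction, produces a subsequence (still denoted $f^{n}$) and a limit $f\in C([0,T],\posrad{\R_{*}})$ with \wkst-continuity, such that $\inner{\phi}{f^{n}_{t}}\to\inner{\phi}{f_{t}}$ uniformly on $[0,T]$ for every $\phi\in C_{c}(\R_{*})$. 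The moment bounds pass to the limit by lower semicontinuity and Fatou.

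\textbf{Step 3 (Passage to the limit in the equation).} Fix $\phi\in C^{1}([0,T],C_{c}(\R_{*}))$ with $\bigcup_{t}\spt\phi(t,\cdot)\subset[a,R]$. The terms $\inner{\phi(t,\cdot)}{f_{t}}$, $\inner{\phi(0,\cdot)}{f_{0}}$, $\int_{0}^{t}\inner{\dot\phi(s,\cdot)}{f_{s}}\rmd s$ and the source term pass to the limit directly by Step 2. For the bilinear coagulation term, decompose the integration region into a compact core $\{(x,y)\colon a\leq x,y\leq N\}$ and its complement. On the core, $(x,y)\mapsto D[\phi](s,\cdot)(x,y)K(x,y)$ is bounded continuous, so convergence of the integral against $f^{n}_{s}\otimes f^{n}_{s}$ follows from weak-* convergence of the product measures on this compact set (the latter itself being a consequence of Step 2). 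On the tail, $D[\phi]$ vanishes unless $x\leq R$ or $y\leq R$, and $|D[\phi]|\leq C_{R}$, so the tail integral is dominated by expressions of the form $C_{R}\int_{[a,R]}x^{-\lambda}f^{n}_{s}(\rmd x)\int_{\{y>N\}}y^{\gamma+\lambda}f^{n}_{s}(\rmd y)$ and its symmetric counterparts; these tend to zero as $N\to+\infty$ uniformly in $n$ and $s\in[0,T]$ by the moment bounds of Step 2. A final dominated convergence in $s\in[0,t]$ closes the argument.

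\textbf{Main obstacle.} The delicate point is precisely the tail control in Step 3: the state space is non-compact and $K$ has power-law growth, so uniform integrability at infinity must be extracted from the finite first moment together with the inequalities $-\lambda<1$ and $\gamma+\lambda<1$. The support bound $\spt(f^{n}_{t})\subset[a,+\infty)$ is equally essential, as it is what prevents the negative-exponent factors $x^{-\lambda},y^{-\lambda}$ in the kernel bound from producing a singularity at the origin. The interplay of these ingredients is what makes the coagulation integral converge through the weak-* limit and allows identification of $f$ as a solution in the sense of Definition \ref{def:compactly-supported-source}.
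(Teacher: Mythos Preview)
Your proof is correct and follows essentially the same route as the paper: truncate the problem to a compact size interval, obtain uniform-in-truncation moment bounds, extract a limit via Arzel\`a--Ascoli/weak-$*$ compactness, and pass to the limit in the weak formulation by splitting the bilinear term into a compact core and a tail controlled through the first moment together with $-\lambda,\gamma+\lambda<1$.

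Two small remarks. First, your equality $M_{1}(f^{n}_{t})=M_{1}(f_{0}\vert_{[a,n]})+tM_{1}(\eta)$ presupposes a mass-conserving truncation (forbidding coagulations whose product exceeds $n$); the paper instead cuts off the gain term with a bump $\zeta_{\mathcal{K}}$, which only yields the inequality $M_{1}(f^{\mathcal K}_{t})\le M_{1}(f_{0})+tM_{1}(\eta)$. Either choice suffices here. Second, you obtain the $M_{-\lambda}$ and $M_{\gamma+\lambda}$ bounds via $M_{0}\le a^{-1}M_{1}$, which introduces an $a$-dependent constant; the paper instead tests directly with $x^{p}$ for $p\in[0,1]$ and uses concavity to get $M_{p}(f_{t})\le M_{p}(f_{0})+TM_{p}(\eta)$, a bound that is independent of $a$. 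For this proposition $a$ is fixed so your version is fine, but the $a$-independence becomes relevant in the next step of the paper when $a=\epsilon\to 0$.
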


	This has the immediate consequence that there exist solutions to the coagulation equation with the initial data \(f_0^\epsilon\) and the source \(\eta_\epsilon = \frac{1}{\epsilon}\delta_\epsilon\).
	\begin{corollary}
	\label{corollary:epsilon-solution}
		Let \(f_0^\epsilon = f_0\vert_{[\epsilon,+\infty)}\), which can be identified with a measure in \(\posradb{\R_*}\). Then there exist a solution to the coagulation equation with the source \(\eta_\epsilon\) and initial data \(f_0^\epsilon\) in the sense of Definition \ref{def:compactly-supported-source}.
	\end{corollary}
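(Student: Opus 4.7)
My plan is to verify the hypotheses of Proposition \ref{proposition:compactly-supported-solution} for the specific choice of source $\eta = \eta_\epsilon = \frac{1}{\epsilon}\delta_\epsilon$ and initial data $f_0^\epsilon = f_0\vert_{[\epsilon,+\infty)}$, using the lower support bound $a = \epsilon$, and then invoke the proposition directly.

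First, I would verify that the source $\eta_\epsilon$ meets the proposition's requirements. It is a positive Radon measure on $\R_*$ of total mass $\eta_\epsilon(\R_*) = \frac{1}{\epsilon}$, hence $\eta_\epsilon \in \posradb{\R_*}$. Its support is the single point $\{\epsilon\} \subset [\epsilon,+\infty)$, and its first moment is $M_1(\eta_\epsilon) = \frac{1}{\epsilon}\cdot \epsilon = 1$, so both the support and first-moment hypotheses are satisfied (the point support is in particular compact, so the condition $\spt(\eta) \subset [a,M]$ of Definition \ref{def:compactly-supported-source} also holds with $a = M = \epsilon$).

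Next I would verify the initial data. The ambient assumption $xf_0 \in \posradb{\R_*}$, which is in force throughout this part of the paper, gives $M_1(f_0) < +\infty$. By definition of the restriction, $f_0^\epsilon$ has support in $[\epsilon,+\infty)$, and
\begin{align}
f_0^\epsilon(\R_*) = \int_{[\epsilon,+\infty)} f_0(\rmd x) \leq \frac{1}{\epsilon}\int_{[\epsilon,+\infty)} x\, f_0(\rmd x) \leq \frac{M_1(f_0)}{\epsilon} < +\infty,
\end{align}
so $f_0^\epsilon \in \posradb{\R_*}$. Its first moment satisfies $M_1(f_0^\epsilon) \leq M_1(f_0) < +\infty$ by monotonicity.

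With these bookkeeping checks in place, Proposition \ref{proposition:compactly-supported-solution} applies with $a = \epsilon$ and produces the desired solution $f^\epsilon \in C([0,T], \posradb{\R_*})$ in the sense of Definition \ref{def:compactly-supported-source}. There is no genuine obstacle here; the corollary is simply the specialization of the existence result to the family of $\epsilon$-concentrated sources that will serve as the approximating sequence in the subsequent construction of flux solutions.
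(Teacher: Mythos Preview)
Your proof is correct and takes essentially the same approach as the paper, which simply states that the corollary is an immediate consequence of Proposition~\ref{proposition:compactly-supported-solution} without spelling out the hypothesis checks. Your verification that $\eta_\epsilon$ and $f_0^\epsilon$ satisfy the boundedness, support, and first-moment requirements is exactly the bookkeeping the paper leaves implicit.
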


	To prove Proposition \ref{proposition:compactly-supported-solution}, we first prove the existence of solutions to a truncated problem, after which we will show that we will find a solution to \eqref{eq:compactly-supported-source} as a limit point of such truncated solutions.
		
	\begin{definition}[Truncated coagulation equation]
	\label{def:compactly-supported-source-truncated}
	Let \(a>0\) and \(\mathcal{K} \geq \max(2,2a)\). Let \(\zeta_{\mathcal{K}} \in C_c(\R_*)\) be a function with \(\zeta_{\mathcal{K}} \succ \cf{[a, {\mathcal{K}}]}\) and \(\cf{[a/2,2\mathcal{K}]} \succ \zeta_\mathcal{K}\).	
	We say that a time-dependent measure \(f \in C([0, T], \posradb{\R_*})\) satisfies a truncated coagulation equation with a source \(\eta \in \posradb{\R_*}\) with finite first moment and support inside \([a,2\mathcal{K}]\), with initial data \(f_0 \in \posradb{\R_*}\) with a finite first moment and support in \([a,2\mathcal{K}]\), in case for every \(\phi \in C^1([0,T], C_c(\R_*))\), we have
	\begin{align}
	\label{eq:truncated-coagulation-equation}
	&\int_{\R_*}\phi(t,x)f_t(\rmd x) - \int_{\R_*}\phi(0,x)f_0(\rmd x) = \int_{0}^t \rmd s \int_{\R_*} \dot{\phi}(s,x) f_s(\rmd x) \nonumber \\
	&\int_{0}^t \rmd s \int_{\R_*}\int_{\R_*} \left(\zeta_{\mathcal{K}}(x+y)\phi(s,x+y)-\phi(s,x)-\phi(s,y)\right)K(x,y)f_s(\rmd x) f_s(\rmd y) \nonumber \\
	&+ \int_{0}^{t} \rmd s \int_{\R_*}\phi(s,x)\eta(\rmd x)
	\end{align}
	\end{definition}
	
	We will sometimes use the shorthand notation \(D_{\mathcal{K}}[\phi]\) for the function
	\begin{align}
	D_{\mathcal{K}}[\phi] &\colon \R_*^2 \to \R \\
	D_{\mathcal{K}}[\phi](x,y) &= \zeta_{\mathcal{K}}(x+y)\phi(x+y)-\phi(x)-\phi(y)
	\end{align}
	
	The following theorem follows directly from Proposition 3.6 in \cite{ferreira_stationary_2021}, so we will omit the proof from here.
	\begin{proposition}
		Suppose that the Assumption \ref{assumption:kernel-regularity} holds and additionally the exponents in the coagulation kernel \(K\) satisfy \(-\lambda < 1\) and \(\gamma+\lambda < 1\). Let \(a>0\) and \(\mathcal{K}\geq \max(2,2a)\). Then, given an initial data \(f_0 \in \posradb{\R_*}\) with \(\spt(f_0) \subset [a,2\mathcal{K}]\) and a source term \(\eta \in \posradb{\R_*}\) with \(\spt(\eta) \subset [a,2\mathcal{K}]\). Then there exists a unique time-dependent solution \(f \in C^1([0,T], \posradb{\R_*})\) to the truncated coagulation equation in a classical sense. Moreover, this solution solves \eqref{eq:truncated-coagulation-equation} in the sense of Definition \ref{def:compactly-supported-source-truncated}.
		
		Additionally, the support of the solution stays inside the following compact set:
		\begin{align}
		\label{inclusion:support_ft_truncated}
		\spt(f_t) \subset [a,2\mathcal{K}], \quad t \in [0,T].
		\end{align}
	\end{proposition}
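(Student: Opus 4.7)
The plan is to recast the truncated equation as a Banach-space ordinary differential equation in $\radb{[a,2\mathcal{K}]}$, solve it via a Picard fixed-point argument, and then verify the support, regularity, and weak-formulation properties. Since $K$ is continuous on the compact set $[a,2\mathcal{K}]^2$, it is bounded there by some $C_K$, and $\|\zeta_{\mathcal{K}}\|_\infty \leq 1$. Therefore, for any $f \in \radb{[a,2\mathcal{K}]}$, the map $\phi \mapsto \iint D_{\mathcal{K}}[\phi](x,y)\, K(x,y)\, f(\rmd x) f(\rmd y)$ is a bounded linear functional on $C([a,2\mathcal{K}])$, which by the Riesz representation theorem defines a signed measure $Q_{\mathcal{K}}(f) \in \radb{[a,2\mathcal{K}]}$. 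The assignment $f \mapsto Q_{\mathcal{K}}(f)$ is bilinear with total-variation Lipschitz constant scaling as $C_K(\|f\|_{TV}+\|g\|_{TV})$ on bounded subsets.

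First I would prove local existence and uniqueness by applying Banach's contraction principle to the mild formulation
\begin{equation*}
f_t = E_t^0 f_0 + \int_0^t E_t^s \bigl[G_{\mathcal{K}}(f_s) + \eta\bigr]\, \rmd s,
\end{equation*}
where $Q_{\mathcal{K}} = G_{\mathcal{K}} - L$ is split into gain and multiplicative loss, and $E_t^s$ is the exponential propagator of $-L(f_\bullet)$; this form ensures positivity is preserved by non-negative iterates. To extend globally on $[0,T]$, I would test against a smooth cutoff equal to $x$ on $[a,2\mathcal{K}]$: because $\zeta_{\mathcal{K}}(x+y)(x+y) \leq x+y$, the bilinear contribution is non-positive, so $M_1(f_t) \leq M_1(f_0) + t\, M_1(\eta)$, and hence $\|f_t\|_{TV} \leq M_1(f_t)/a$ once the support is confined to $[a,+\infty)$. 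This uniform bound prevents blow-up, and the standard continuation argument yields a solution on all of $[0,T]$.

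The support inclusion \eqref{inclusion:support_ft_truncated} is then verified as follows. For the upper bound, testing against $\phi \in C_c((2\mathcal{K},+\infty))$ kills the source term (by $\spt(\eta) \subset [a,2\mathcal{K}]$) and also kills the gain term, since $\zeta_{\mathcal{K}}(x+y) = 0$ whenever $x+y > 2\mathcal{K}$; the restriction $f_t\vert_{(2\mathcal{K},+\infty)}$ then satisfies a homogeneous linear equation with vanishing initial data and hence is zero. The lower bound is even simpler: coagulation of particles of size $\geq a$ yields particles of size $\geq 2a \geq a$, so no mass is produced in $(0,a)$. Uniqueness and $C^1$-regularity in time follow immediately from the fixed-point construction, and the equivalence with \eqref{eq:truncated-coagulation-equation} is obtained by testing the pointwise identity against $\phi(t,\cdot) \in C_c(\R_*)$ and integrating by parts in time, using the product rule and the continuity of $t \mapsto f_t$. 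The main technical point requiring care is the contraction estimate in the measure-valued Picard iteration, namely verifying $\|Q_{\mathcal{K}}(f) - Q_{\mathcal{K}}(g)\|_{TV} \leq C_K(\|f\|_{TV}+\|g\|_{TV})\|f-g\|_{TV}$, which crucially relies on the boundedness of $K$ and $\zeta_{\mathcal{K}}$ on the compact support and explains why the truncation at size $2\mathcal{K}$ is introduced in the first place.
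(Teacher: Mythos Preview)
Your approach is correct and is the standard route for this kind of result. In fact, the paper does not give its own proof of this proposition: it states that the result follows directly from Proposition~3.6 in \cite{ferreira_stationary_2021} and omits the argument entirely. What you outline --- recasting the truncated equation as a Banach-space ODE on $\radb{[a,2\mathcal{K}]}$, using the boundedness of $K$ on the compact square to get local Lipschitz continuity of the bilinear operator, and running a Picard iteration in mild form to preserve positivity --- is precisely the content of that cited proposition.

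One small ordering issue: you invoke $\|f_t\|_{TV} \leq M_1(f_t)/a$ to rule out blow-up, but this requires $\spt(f_t)\subset [a,+\infty)$, which you only verify afterwards. The cleanest fix is either to establish the support property already at the level of the Picard iterates (the gain term, thanks to $\zeta_{\mathcal{K}}$ and the convolution structure, only deposits mass in $[2a,2\mathcal{K}]$, so each iterate stays supported in $[a,2\mathcal{K}]$), or simply to bound $M_0$ directly: testing with a cutoff equal to $1$ on $[a,2\mathcal{K}]$ gives $D_{\mathcal{K}}[1](x,y)=\zeta_{\mathcal{K}}(x+y)-2\leq 0$, hence $M_0(f_t)\leq M_0(f_0)+t\,M_0(\eta)$, which is the total-variation bound you need without any appeal to the support. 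Either route closes the continuation argument.
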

	
	
	\begin{lemma}
	\label{lemma:truncated-moment-bounds}	
	Any solution to \eqref{eq:truncated-coagulation-equation} in the sense of Definition \eqref{def:compactly-supported-source-truncated} satisfies the following bounds:
	\begin{align}
	\sup_{t \in [0,T] }M_{\gamma+\lambda}(f_t) \leq C <+\infty
	\end{align}
	\begin{align}
	\sup_{t \in [0,T] }M_{-\lambda}(f_t) \leq C <+\infty,
	\end{align}
	where the constant \(C\) may depend on \(f_0\), \(T\) and \(\eta\), as well as the exponents and constants of the coagulation kernel, but it does not depend on \(a\) nor \(\mathcal{K}\).
	
	\end{lemma}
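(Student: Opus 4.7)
The plan is to test the weak formulation \eqref{eq:truncated-coagulation-equation} with compactly-supported approximations of the monomials $x\mapsto x^{-\lambda}$ and $x\mapsto x^{\gamma+\lambda}$, both of whose exponents are strictly less than $1$ under the standing hypothesis $-\lambda<1$ and $\gamma+\lambda<1$. The cornerstone of the argument is an elementary sign inequality: $D_{\mathcal{K}}[x^p](x,y)\leq 0$ for every real $p<1$ and every $(x,y)\in\R_*^2$. I would establish this by a short case split. For $p\in[0,1)$, the concavity of $t\mapsto t^p$ on $[0,\infty)$ gives the sub-additivity $(x+y)^p\leq x^p+y^p$, and since $\zeta_{\mathcal{K}}$ takes values in $[0,1]$ the sign is preserved. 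For $p<0$, the monotonicity of $t\mapsto t^p$ forces $(x+y)^p<\min(x^p,y^p)$, whence $\zeta_{\mathcal{K}}(x+y)(x+y)^p\leq(x+y)^p<x^p$ and therefore $D_{\mathcal{K}}[x^p](x,y)<-y^p<0$.

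Armed with this sign property, I would pick a cutoff $\chi_n\in C_c(\R_*)$ identically equal to $1$ on an interval containing $[a/2,\,4\mathcal{K}]$, and use the time-independent test function $\phi_n(x)=x^p\chi_n(x)$ in \eqref{eq:truncated-coagulation-equation}. Because $\spt(f_s)\subset[a,2\mathcal{K}]$ by \eqref{inclusion:support_ft_truncated} for every $s\in[0,T]$ and $\spt(\zeta_{\mathcal{K}})\subset[a/2,2\mathcal{K}]$, the cutoff $\chi_n$ equals $1$ wherever the integrands appearing in \eqref{eq:truncated-coagulation-equation} are supported, so $\phi_n$ may be replaced by $x^p$ in each integral. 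Dropping the non-positive double-integral term then yields
\begin{align}
M_p(f_t)\leq M_p(f_0)+t\,M_p(\eta)
\end{align}
for every $t\in[0,T]$, where pointwise validity (rather than a.e.) follows from the weak-$*$ continuity of $f$ together with the bounded support \eqref{inclusion:support_ft_truncated}, which allows $x^p$ to be regarded as a bounded continuous function on $\spt(f_t)$.

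Specialising to $p=-\lambda$ and $p=\gamma+\lambda$ produces the two announced bounds, with the constant essentially $M_p(f_0)+T\,M_p(\eta)$; this depends on $f_0$, $\eta$, $T$, and the kernel exponents, but not on the regularisation parameters $a$ and $\mathcal{K}$ themselves. I do not anticipate a serious obstacle: the only step deserving genuine attention is the justification that the auxiliary cutoff $\chi_n$ can be removed without altering either side of the equation, which is exactly where the support inclusion \eqref{inclusion:support_ft_truncated} is essential.
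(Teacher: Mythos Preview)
Your proposal is correct and follows essentially the same route as the paper: test the weak formulation with a compactly supported cutoff of $x^p$, observe that the coagulation term is nonpositive because $D_{\mathcal{K}}[x^p]\le 0$, and drop it to obtain $M_p(f_t)\le M_p(f_0)+t\,M_p(\eta)$. Your explicit treatment of the case $p<0$ is in fact slightly more careful than the paper's, which writes the argument only for $p\in[0,1]$.
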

	\begin{proof}
	Let \(\psi(x)=x^p\phi(x)\), where the exponent satisfies \(p \in [0,1]\) and the function \(\phi \colon \R_* \to \R\) is defined piecewise as follows:
	\begin{align}
	\phi(x) = \begin{cases}
	0, \quad &0 < x < \frac{a}{2}\\ 
	\frac{2}{a}x, \quad &\frac{a}{2} \leq x < a\\
	1, \quad &a \leq x < 2\mathcal{K}\\
	((2\mathcal{K}+1)-x), \quad &2\mathcal{K} \leq x < 2\mathcal{K}+1\\
	0, \quad  &2\mathcal{K}+1 \leq x < +\infty.
	\end{cases}
	\end{align} Since \(\phi \in C_c(\R_*)\), then also \(\psi \in C_c(\R_*)\). It is therefore a valid test function. Applying the weak formulation \eqref{eq:truncated-coagulation-equation}, we obtain
	\begin{align}
	\int_{\R_*} x^p f_t(\rmd x) &= \int_{\R_*} x^p\phi(x)f_t(\rmd x)\ \nonumber \\
	&\leq \int_{\R_*}x^p f_0(\rmd x) + T\int_{[a,+\infty)}x^p\eta(\rmd x).
	\end{align}
	From this, we obtain the following bound for the zeroeth moment of the measures \(f_t\)
	\begin{align}
	\label{ineq:zeroeth-moment-bound-for-truncated}
	&\sup_{t\in [0,T]} M_0(f_t) \leq M_0(f_0) + TM_{0}(\eta) 
	\end{align}
	as well as the \(\gamma+\lambda\) and \(-\lambda\) moments
	\begin{align}
	&\sup_{t\in [0,T]} M_{\gamma+\lambda}(f_t) \leq M_{\gamma+\lambda}(f_0) + TM_{\gamma+\lambda}(\eta) \\
	&\sup_{t\in [0,T]} M_{-\lambda}(f_t) \leq M_{-\lambda}(f_0) + TM_{-\lambda}(\eta).
	\end{align}
	\end{proof}
	\begin{remark}
	We actually have a tighter bound for all \(p \in [0,1]\), given by solving the upper bound for the differential inequality
	\begin{align}
	\dv{t}\int_{[a,+\infty)} x^p f_t(\rmd x) \leq (2^{p}-2)a^p c_1 M_{\gamma+\lambda}(f_t)M_{-\lambda}(f_t) + M_p(\eta).
	\end{align}
	\end{remark}

	\begin{lemma}
	\label{lemma:coagulation-part-estimate}
	Let \(f\) be a solution to the coagulation equation with a compactly supported source according to Definition \ref{def:compactly-supported-source-truncated}. Then it will satisfy the following bound for  all \(t \in [0,T]\)
	\begin{align}
		\int_{\R_*}\int_{\R_*}K(x,y)f_t(\rmd x)f_t(\rmd y) \leq 2 c_2 M_{\gamma+\lambda}(f_t)M_{-\lambda}(f_t).
		\label{ineq:coagulation-kernel-bound-time-slice}
	\end{align}
	\end{lemma}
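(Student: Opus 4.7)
The proof is a direct application of the upper bound in Assumption \ref{assumption:kernel-regularity} combined with Tonelli's theorem, since the integrand is non-negative. My plan is as follows.

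First, I would invoke the upper bound of the kernel from \eqref{ineq:kernel-bounds}, yielding
\begin{align}
\iint_{\R_*^2} K(x,y) f_t(\rmd x) f_t(\rmd y) \leq c_2 \iint_{\R_*^2}\bigl(x^{\gamma+\lambda}y^{-\lambda} + x^{-\lambda}y^{\gamma+\lambda}\bigr) f_t(\rmd x) f_t(\rmd y).
\end{align}
Since the integrand is non-negative and measurable, Tonelli's theorem applies and the double integral of each term factors into a product of one-dimensional integrals. After recognizing these as moments,
\begin{align}
\iint_{\R_*^2}\bigl(x^{\gamma+\lambda}y^{-\lambda} + x^{-\lambda}y^{\gamma+\lambda}\bigr) f_t(\rmd x) f_t(\rmd y) = 2 M_{\gamma+\lambda}(f_t) M_{-\lambda}(f_t),
\end{align}
which delivers the claimed inequality upon multiplication by \(c_2\).

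To ensure this manipulation is justified, I need the moments \(M_{\gamma+\lambda}(f_t)\) and \(M_{-\lambda}(f_t)\) to be finite for every \(t \in [0,T]\). For the truncated setting of Definition \ref{def:compactly-supported-source-truncated}, this is automatic from the support inclusion \(\spt(f_t) \subset [a,2\mathcal{K}]\) together with \(f_t \in \posradb{\R_*}\), since the power functions \(x^{\gamma+\lambda}\) and \(x^{-\lambda}\) are continuous and bounded on the compact set \([a,2\mathcal{K}]\). Alternatively, the uniform moment bounds established in Lemma \ref{lemma:truncated-moment-bounds} provide the required finiteness with constants independent of \(a\) and \(\mathcal{K}\), which is what will be needed later when passing to the limit.

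There is no real obstacle in this proof; it is a one-line computation. The only subtlety worth mentioning explicitly is that the bound \eqref{ineq:coagulation-kernel-bound-time-slice} is stated pointwise in \(t\) (not merely in an integrated sense), so one should apply the argument separately at each fixed time, using that at each such time \(f_t\) is a bounded Radon measure with compact support inside \([a,2\mathcal{K}]\).
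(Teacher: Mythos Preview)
Your proposal is correct and matches the paper's own proof, which simply says the claim follows immediately from the upper bound in \eqref{ineq:kernel-bounds} together with Lemma \ref{lemma:truncated-moment-bounds}. Your additional remarks about Tonelli and the compact support of \(f_t\) are fine elaborations but not needed beyond what the paper states.
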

	\begin{proof}
	The claim follows immediately from the upper bound in \eqref{ineq:kernel-bounds} together with Lemma \ref{lemma:truncated-moment-bounds}.
	\end{proof}
	
	We are now ready to prove Proposition \ref{proposition:compactly-supported-solution}.
	
	\begin{proof}[Proof of  Proposition \ref{proposition:compactly-supported-solution}.]
	Proposition 3.6 in \cite{ferreira_stationary_2021} shows that there exists a unique solution \(f^{\eta, \mathcal{K}} \in C^1([0, T], \posradb{\R_*})\) that satisfies Definition \ref{def:compactly-supported-source-truncated}. Here the target space \(\posradb{\R_*}\) is equipped with the total variation norm.
	
	Letting the truncation parameter \(\mathcal{K}\) vary produces a family of time-dependent measures, \(\mathfrak{F}_{\eta} \coloneqq \{f^{\eta,\mathcal{K}}\}_{\mathcal{K}\geq 2}\). We then consider the following subset of test functions. Let \(P_\eta\) be given by 
	\begin{align}
	P_\eta \coloneqq \{\phi \in C_c(\R_*) \colon \norm{\phi}_\infty \leq \frac{1}{4(C_{\eta,T} + T\tilde{C})}\},
	\end{align}
	where \(\tilde{C}=2c_2 \sup_{t\in [0,T]}M_{\gamma+\lambda}(f_t)M_{-\lambda}(f_t)<+\infty\).  On the other hand, the constant \(C_{\eta,T}\) is obtained from the uniform--in--\(\mathcal{K}\) bound for the zeroeth moments of \(f^{\eta,\mathcal{K}}\). This is possible, since \(f^{\eta,\mathcal{K}}\) is supported away from the origin.
 	
 	We can consider the set of those positive, bounded Radon measures that are supported inside \([a,+\infty)\) denote this by \[\mathscr{X}_a \coloneqq \{\mu \in \posradb{\R_*} \colon \spt(\mu) \subset [a,+\infty)\}.\] The set \(\mathscr{X}_a\) is a closed subset of \(\posradb{\R_*}\) when the latter is equipped with the \wkst-topology.
 	
 	We also define the set
	\begin{align}
	\mathscr{Y}_{T,\eta} \coloneqq \{\mu \in \posradb{\R_*} \colon \abs{\inner{\phi}{\mu}} \leq 1 \text{ for all } \phi \in P_\eta\}.
	\end{align} 
	
	The collection \(\mathfrak{F}_\eta\) satisfies the inclusion \(\mathfrak{F}_\eta \subset C([0, T], \mathscr{Y}_{T,\eta})\). This follows from Lemmas \ref{lemma:truncated-moment-bounds} and \ref{lemma:coagulation-part-estimate}.
 	
	We want to show that for any \(f_0 \in \mathscr{Y}_{T,\eta} \cap \mathscr{X}_a\), the solution to \eqref{eq:truncated-coagulation-equation} with the initial data \(f_0 \vert_{[a,2\mathcal{K}]}\) stays inside \(\mathscr{Y}_{T,\eta} \cap \mathscr{X}_a\) for all times \(t\in [0,T]\). To this end, let \(\mathcal{K} \geq 1\). By \eqref{inclusion:support_ft_truncated}, we have \(f_t \in \mathscr{X}_a\).  Pick \(\phi \in P_\eta\). Now by positivity of the measure \(f_t^{\eta,\mathcal{K}}\) and the estimate \eqref{ineq:zeroeth-moment-bound-for-truncated}, we obtain 
	\begin{align}
	\abs{\inner{\phi}{f_t^{\eta,\mathcal{K}}}} &\leq \norm{\phi}_\infty \int_{\R_*}\abs{f^{\eta,\mathcal{K}}_t}(\rmd x) \\
	&= \norm{\phi}_\infty \int_{\R_*}f^{\eta,\mathcal{K}}_t(\rmd x) \leq 1.
\end{align}	
So for all times \(t \in [0,T]\), we have \(f_t^{\eta,\mathcal{K}} \in \mathscr{Y}_{T,\eta}\). Since \(f^{\eta,\mathcal{K}}\) is additionally continuous to \(\posradb{\R_*}\) when the target space is equipped with the \wkst-topology, and especially to \(\mathscr{Y}_{T,\eta}\), it follows that \(f^{\eta,\mathcal{K}} \in C([0,T], \mathscr{Y}_{T,\eta})\).

	Equipped with the \wkst-topology, the set \(\mathscr{Y}_{T,\eta}\) becomes a complete metric space. This property is then extended to the space \(C([0,T], \mathscr{Y}_{T,\eta})\) when equipped with the uniform metric. Moreover, since the collection of time-dependent measures \(\overline{\mathfrak{F}_\eta} \subset C([0,T], \mathscr{Y}_{T,\eta})\) is equicontinuous and the time-slices 
	\begin{align}
	\overline{\mathfrak{F}_\eta}[t] \coloneqq \{f(t) \colon f \in \overline{\mathfrak{F}_\eta}\}
	\end{align}
	have compact closure, it follows by the Arzela--Ascoli theorem for metrizable spaces that the closure \(\overline{\mathfrak{F}_\eta}\) is sequentially compact. 
	
	Since for every \(C > 0\), the set
	\begin{align}
	\{\mu \in \mathscr{X}_a \colon \int_{\R_*} x \mu(\rmd x) \leq C\} \subset \posradb{\R_*}
	\end{align}
	is closed in the \wkst-topology, it follows that any \(f \in \overline{\mathfrak{F}_\eta}\) satisfies the following bound for the first moment
	\begin{align}
	\label{ineq:truncation-limit-satisfies-moment-bound}
	\int_{\R_*} x f_t(\rmd x) \leq \int_{\R_*} x f_0(\rmd x) + t \int_{\R_*} x\eta(\rmd x), \quad t \in [0,T].
	\end{align}
	Thus, for later use, we also have the estimate
	\begin{align}
	\sup_{t \in [0,T]} \int_{\R_*} x f_t(\rmd x) \leq C_T.
	\end{align}

	By sequential compactness, we can find sequence of natural numbers \((\mathcal{K}_n)_{n\in \N}\) with \(\mathcal{K}_n \to \infty\) and a limit point 
	\begin{align}
	f^\eta \in \overline{\mathfrak{F}_\eta} \subset C([0,T], \mathscr{Y}_{T,\eta}),
	\end{align} such that the following convergence holds
	\begin{align}
	f^{\eta, \mathcal{K}_n} \to f^{\eta}, \quad \text{in } C([0,T], \mathscr{Y}_{T, \eta}).
	\end{align}

	The next step is verifying that the limit measure satisfies the coagulation equation in the sense of Definition \ref{def:compactly-supported-source}. 
    Fix \(\phi \in C^1([0,T], C_c(\R_*))\). Then \(\phi = x \psi\), where \(\psi \in C^1([0,T], C_c(\R_*))\). Then let
	\begin{align}
	c_s \coloneqq \max\spt(\psi(s,\cdot)) \\
	c \coloneqq \max_{s\in [0,T]} c_s <+\infty.
	\end{align}
	
	For any \(L \geq c\), we have
	\begin{align}
	\label{eq:in-out}
	&\frac{1}{2}\iint_{\R_*^2\setminus (0,L]^2} D_{\mathcal{K}_n}[\phi_s](x,y)K(x,y)f_s^{\eta,{\mathcal{K}_n}}(\rmd x)f_s^{\eta,{\mathcal{K}_n}}(\rmd y) \nonumber \\
	&= -\iint_{(0,L] \times [L,+\infty)}x\psi(s,x)K(x,y)f_s^{\eta,\mathcal{K}_{n}}(\rmd x)f_s^{\eta,\mathcal{K}_{n}}(\rmd y).
	\end{align}
	
	Since \(-\lambda, \gamma+\lambda < 1\), then in the region \(y \in [L,+\infty)\) we can estimate \(y^{-\lambda}\) and \(y^{\gamma+\lambda}\) from above by \(L^{-\lambda-1}y\) and \(L^{\gamma+\lambda -1}y\), respectively. Therefore, the modulus of the integral on the right hand side of \eqref{eq:in-out} can be bounded from above, uniformly in \(\mathcal{K}_n\), by
	\begin{align}
	\label{expression:upper-bound-for-coagulation-tails}
	\sup_{x \in (0,c]}\abs{x^{\gamma+\lambda}\psi(s,x)} C(T)^2 L^{-\lambda-1}+ \sup_{x \in (0,c]}\abs{x^{-\lambda}\psi(s,x)} C(T)^2 L^{\gamma+\lambda-1}.
	\end{align}
	This can be made small by picking \(L\) large enough. Here, the constant \(C(T)\) has the form
	\begin{align}
	C(T) = c_2(M_1(f_0) + T),
	\end{align}	so it depends on the initial data \(f_0\), the final time \(T\) and the properties of the coagulation operator through the upper bound in \eqref{ineq:kernel-bounds}. Since \(L \geq c\) and \(x \mapsto \psi(s,x)\) is supported in \((0,c]\), we can take the supremum over this set instead of the larger set \((0,L]\). Moreover, since \(\min_{s\in [0,T]} \min(\spt(\psi(s,\cdot))) \geq b > 0\), the two suprema in \eqref{expression:upper-bound-for-coagulation-tails} are finite. 

	By \eqref{ineq:truncation-limit-satisfies-moment-bound}, similar bounds are satisfied by the limit measure \(f^{\eta}_s\).
	
	On the other hand, once \(\mathcal{K}_n \geq 2L\), the integral
	\begin{align}
	g_{\mathcal{K}_n}[\phi;f^{\eta, \mathcal{K}_n}](t) \coloneqq \frac{1}{2}\int_{0}^t \iint_{(0,L]^2} D_{\mathcal{K}_n}[\phi_s](x,y)K(x,y)f_s^{\eta,\mathcal{K}_n}(\rmd x)f_s^{\eta,\mathcal{K}_n}(\rmd y)
	\end{align}
	agrees with the following integral, where the truncated \(D_{\mathcal{K}_n}[\phi_s]\) is replaced by the limit operator \(D[\phi_s]\)
	\begin{align}
	g_{\infty}[\phi;f^{\eta,\mathcal{K}_n}](t) \coloneqq \frac{1}{2}\int_{0}^t \iint_{(0,L]^2} D[\phi_s](x,y)K(x,y)f_s^{\eta,\mathcal{K}_n}(\rmd x)f_s^{\eta,\mathcal{K}_n}(\rmd y),
	\end{align}
	and since the integrand can be approximated arbitrarily well (up to an \(\bar{\epsilon}\) of error) by a \(C_c(\R_*)\) function, it follows by the \wkst-convergence of \(f^{\eta,\mathcal{K}_{n}} \otimes f^{\eta, \mathcal{K}_n}\) towards \(f^{\eta} \otimes f^{\eta}\) together with dominated convergence  -- recall that the support of the measures has a positive distance to \(0\) -- that the following upper bound is satisfied
	\begin{align}
	&\limsup_{n\to \infty}\abs{g_{\infty}[\phi;f^{\eta,\mathcal{K}_n}](t)-g_{\infty}[\phi;f^{\eta}](t)} \leq \bar{\epsilon},
	\end{align}
	
	Since the other terms also converge
	\begin{align}
	\int_{\R_*}\phi(t,x)f_t^{\eta,\mathcal{K}_n}(\rmd x)  \to \int_{\R_*}\phi(t,x)f_t^{\eta}(\rmd x),
	\end{align}
	\begin{align}
		\int_{\R_*}\phi(0,x)f_0^{\eta,\mathcal{K}_n}(\rmd x)  \to \int_{\R_*}\phi(0,x)f_0(\rmd x),  
	\end{align}	
	and
	\begin{align}
	\int_0^{t}\int_{\R_*}\dot{\phi}(s,x)f_s^{\eta,\mathcal{K}_n}(\rmd x) \rmd s \to \int_0^{t}\int_{\R_*}\dot{\phi}(s,x)f_s^{\eta}(\rmd x) \rmd s,
	\end{align}
	it follows that the limit time-dependent measure \(f^\eta\) is a solution according to Definition \ref{def:compactly-supported-source}. In conclusion, it follows that for each \(\bar{\epsilon} > 0\), there exists \(L \geq 1\) and \(N_0 = N_0(\bar{\epsilon},L) \in \N\) such that for all \(n \geq N_0\)
	\begin{align}
	\abs{\frac{1}{2}\iint_{\R_*} D_{\mathcal{K}_n}[\phi_s](x,y)K(x,y)f^{\eta,\mathcal{K}_n}_s(\rmd x)f_s(\rmd y)-\frac{1}{2}\iint_{\R_*} D[\phi_s](x,y)K(x,y)f^{\eta}_s(\rmd x)f_s(\rmd y)} \leq 2\bar{\epsilon}
	\end{align}
	and
	\begin{align}
	\abs{\int_{\R_*}\phi(t,x)f_t^{\eta,\mathcal{K}_n}(\rmd x)-\int_{\R_*}\phi(t,x)f_t^\eta(\rmd x)} \leq \bar{\epsilon}\nonumber \\
	\abs{\int_{\R_*} \phi(0,x)f_0^{\eta,\mathcal{K}_n}(\rmd x)- \int_{\R_*}\phi(0,x)f_0^{\eta}(\rmd x)} \leq \bar{\epsilon} \nonumber \\
	\abs{\int_0^{t}\int_{\R_*}\dot{\phi}(s,x)f_s^{\eta,\mathcal{K}_n}(\rmd x) \rmd s - \int_0^{t}\int_{\R_*}\dot{\phi}(s,x)f_s^{\eta}(\rmd x) \rmd s} \leq \bar{\epsilon}.
	\end{align}
	Since \(\bar{\epsilon} \geq 0\) was arbitrary and since each \(f^{\eta,\mathcal{K}_n}\) satisfies \eqref{eq:truncated-coagulation-equation}, it follows that  the limit point \(f^{\eta}\) satisfies \eqref{eq:compactly-supported-source}.
	\end{proof}

	\section{Solution to the flux equation}
	This section is dedicated to finding a time-dependent measure that solves the flux equation in the sense of Definition \ref{def:weak-flux-solution}. We begin by outlining the main steps of the proof.
	
	\subsection{Strategy of the proof}
	Proposition \ref{proposition:compactly-supported-solution} guarantees that for any compactly supported \(\eta \in \posradb{\R_*}\), we have a time-dependent measure \(f^\eta \in C([0,T], \mathscr{Y}_{T,\eta})\) that solves the coagulation equation with a compactly supported source in the sense of Definition \ref{def:compactly-supported-source}. We can thus consider a family of such solutions, each obtained from the coagulation equation with a different source term, but all sharing compatible initial data. These source terms are taken as \(\frac{1}{\epsilon}\delta_{\epsilon}\), which means that they are supported at the point \(\epsilon \in \R_*\) and their mass is normalized. Our aim is then to find a time-dependent measure as a limit point of such a family of solutions. In order to find a bounded Radon measure in the limit, we have to regularize the solutions by multiplying them by the mass of the particles. The resulting mass measure then satisfies the weak formulation \eqref{evol-eq:weak-flux-solution} of the flux equation.
	
	Roughly, the proof can be divided into the following several steps:
	\begin{itemize}
	\item[1.] The initial data is a measure \(f_0 \in \posradb{\R_*}\), which additionally satifies the moment condition \(xf_0 \in \posradb{\R_*}\).
	\item[2.] For each \(\epsilon \in (0,1)\), consider the coagulation equation with the compactly supported source \(\frac{1}{\epsilon}\delta_\epsilon\) and the initial data \(f_0\vert_{[\epsilon,+\infty)}\). Pick one solution to the initial value problem in the sense of Definition \ref{def:compactly-supported-source}. This is possible, since Proposition \ref{proposition:compactly-supported-solution} guarantees the existence of at least one solution.
	\item[3.] For each such solution \(f^{\epsilon}\), consider the corresponding time-dependent mass measure, which we denote by \(xf^{\epsilon} \in C([0,T], \posradb{\R_*})\). The collection of these solutions \(\{x f^{\epsilon}\}_{\epsilon \in (0,1)}\) is a subset of \(\posradb{\R_*}\).
	\item[4.] For each \(M \in \N_1\), consider the family of the solutions restricted to the closed interval \(I_M = [2^{-M},2^{M}]\). Denote this by \(\mathfrak{F}_M = \{xf^\epsilon\vert_{I_M}\}_{\epsilon \in (0,1)}\).
	\item[5.] Let \(M=1\), by compactness we find a limit point  \(F^{M} \in C([0,T], \posradb{\R_*})\) and a sequence \((\epsilon^{1}_i)_{i=1}^\infty\) such that \(xf^{\epsilon^1_i}\vert_{I_1}\to F^1\). Iterate then the following procedure. On the previous step, we have found a limit point \(F^M\) and a sequence \((\epsilon^M)_{i=1}^\infty\) such that \(xf^{\epsilon^M_i}\to F^M\). Consider then the subfamily \(\{xf^{\epsilon^M_i}\vert_{I_{M+1}}\}_{i\in \N}\). By compactness, we find a limit point \(F^{M+1}\) and a subsequence \((\epsilon^{M+1}_i)_{i=1}^\infty\) such that \(xf^{\epsilon_i^{M}}\vert_{I_{M+1}} \to F^{M+1}\). Moreover, \(F^{M+1}\vert_{I_M} = F^{M}\).
	\item[6.] Take the diagonal subsequence \((\epsilon(i))_{i=1}^\infty = (\epsilon^i_i)_{i=1}^\infty\). Define a candidate limit measure  first pointwise in time by setting
	\begin{align}
		\label{eq:candidate-measure-limit-assignment}
		\inner{\phi}{F_t} = \lim_{i \to \infty} \inner{\phi}{xf^{\epsilon(i)}}, \quad \phi \in C_c(\R_*)
	\end{align}	 
	If \(M\) is such that \(\spt(\phi) \in [2^{-M},2^{M}]\), then for all \(i\) satisfying \(\epsilon(i) < 2^{-M}\), we have
	\begin{align}
	\inner{\phi}{xf^{\epsilon(i)}} = \inner{\phi}{xf^{\epsilon(i)}\vert_{I_M}},
	\end{align}
	and these we know to converge to a limit as \(i \to \infty\). The assignment \eqref{eq:candidate-measure-limit-assignment} then defines a positive, linear, bounded functional on \(C_c(\R_*)\), and therefore corresponds to a Radon measure \(F_t \in C_c(\R_*)\). As a mapping from \([0,T]\) to \(\posradb{\R_*}\), \(F\) is continuous, so we have a canditate solution \(F \in C([0,T],\posradb{\R_*})\), which we can factor into \(xf \in C([0,T],\posradb{\R_*})\).
	\item[7.] Verify that the solutions \(\{xf^\epsilon\}\) to the coagulation equation with a source in the sense of Definition \ref{def:compactly-supported-source} satisfy certain uniform boundedness properties for the time-averaged weighted dyadic moments and that these estimate transfer to any limit point.  
	\item[8.] The last step is to verify that \(f \in C([0,T],\posrad{\R_*})\) verifies the flux equation in the sense of Definition \ref{def:weak-flux-solution}. 
	\end{itemize}
	
	Steps 3--6 are the construction steps, and in the next subsection we will elaborate on the details. In the subsection after that, we obtain the estimates needed for Step 7, and finally in the last subsection we finish the proof of Theorem \ref{thm:existence-of-weak-flux-solution} by completing step 8.

	\subsection{Finding the candidate solution as a limit point}
		
	For each \(M \in \N_1\), define \(I_M \coloneqq [2^{-M},2^M]\). These form an increasing sequence of sets that satisfy
	\(\R_* = \cup_{M \geq 1} I_M\).	
	
	We find the time-dependent measure in \(C([0,T], \posradb{\R_*})\) by doing the diagonal trick for time-dependent measures in \(C([0,T], \posradb{I_M})\), i.e. by finding a sequence of measures \((xf^{\epsilon(n)})_{n=1}^\infty\) and a limit point \(F \in \posradb{\R_*}\) such that for every \(M \in \N\),  we have
	\begin{align}
	\sup_{s\in [0,T]}d(xf^{\epsilon(n)}(s), F(s)) \to 0.
	\end{align}
	Here the metric \(d = d_M\) is one that metrizes the topology on the relevant compact subset of \(\posradb{\R_*}\).
	
	Since for every \(\phi \in C_c(\R_*)\), there exists \(M \in \N_1\) such that \(\phi \in C_c(I_M)\), it follows that for all \(\phi \in C_c(\R_*)\).
	\begin{align}
		\sup_{s \in [0,T]} \abs{\inner{\phi}{xf^{\epsilon(n)}(s)\vert_{I_M}-F(s)\vert_{I_M}}} \to 0.
	\end{align}	
	
	\begin{notation}
	Here and later on we will use the following notation. If \(g \in C([0,T], \posradb{\R_*})\) and \(K \subset \R_*\), we denote by \(g\vert_{K}\) the time-dependent measure \(g\vert_{K} \in C([0,T], \posradb{\R_*})\), given pointwise in time by \(g\vert_{K}(t) = g(t)\vert_{K}\).
	\end{notation}

	\begin{definition}
	We define \(\mathscr{X}_{T,M}\) as
	\begin{align}
	\mathscr{X}_{T,M} \coloneqq \{\mu \in \posradb{I_M} \colon \abs{\inner{\phi}{\mu}} \leq 1 \text{ for all } \phi \in P_{T,M}\},
	\end{align}
	where \(P_{T,M} \coloneqq \{\phi \in C_c(I_M) \colon  \norm{\phi}_\infty \leq \frac{1}{2C_T}\}\).
	
	We then define \(\mathfrak{F}_M \coloneqq \{xf_\epsilon \vert_{I_M}\}_{0<\epsilon<1} \subset C([0,T], \mathscr{X}_{T,M})\)
	\end{definition}

	It follows from the Banach--Alaoglu theorem that the set \(\mathscr{X}\) is \wkst-compact. Moreover, the following topological result follows from the properties of the \(C_c(\R_*)\) \citep[See Thm 3.6]{rudin_functional_1974}:
	\begin{lemma}
	The \wkst-topology on the set \(\mathscr{X}_{T,M} \subset \posradb{I_M} \subset \posradb{\R_*}\) is metrizable.
	\end{lemma}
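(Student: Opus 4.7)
The plan is to deduce this from the classical fact that the \wkst-topology on a norm-bounded subset of the dual of a separable Banach space is metrizable. Since \(I_M = [2^{-M},2^M]\) is a compact subset of \(\R_*\), the space \(C(I_M)\) with the supremum norm is a separable Banach space (separability follows from the Stone–Weierstrass theorem applied to, e.g., polynomials with rational coefficients). By the Riesz–Markov representation theorem, \(\posradb{I_M}\) embeds into the dual \(C(I_M)^*\) as a subset of the positive cone, and the \wkst-topology on \(\posradb{I_M}\) coming from the pairing with \(C_c(\R_*)\) agrees, on measures supported in \(I_M\), with the topology inherited from \(C(I_M)^*\).

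First I would verify that \(\mathscr{X}_{T,M}\) is norm-bounded in \(C(I_M)^*\). The defining property \(\abs{\inner{\phi}{\mu}} \leq 1\) for every \(\phi \in P_{T,M}\) combined with homogeneity of the pairing yields \(\norm{\mu}_{C(I_M)^*} \leq 2 C_T\) for every \(\mu \in \mathscr{X}_{T,M}\). Hence \(\mathscr{X}_{T,M}\) sits inside the closed ball of radius \(2C_T\) in \(C(I_M)^*\).

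Next I would produce an explicit compatible metric. Choosing a countable dense sequence \((\phi_n)_{n=1}^\infty\) in the unit ball of \(C(I_M)\), define
\begin{align}
d(\mu,\nu) := \sum_{n=1}^\infty 2^{-n}\,\abs{\inner{\phi_n}{\mu-\nu}}.
\end{align}
The series converges on \(\mathscr{X}_{T,M}\) because each summand is bounded by \(2^{-n}\cdot 4 C_T\), and \(d\) is a metric: the only nonobvious point, positive definiteness, uses the density of \((\phi_n)\) in \(C(I_M)\) together with the fact that a Radon measure vanishing against all \(\phi_n\) must be zero. To see that \(d\) induces the \wkst-topology on \(\mathscr{X}_{T,M}\): if \(\mu_k \to \mu\) in the \wkst-sense, then \(\abs{\inner{\phi_n}{\mu_k-\mu}} \to 0\) for every \(n\), and since these terms are uniformly bounded by \(2\cdot 2C_T\), the dominated convergence theorem for series gives \(d(\mu_k,\mu) \to 0\). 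Conversely, if \(d(\mu_k,\mu) \to 0\), then for each \(n\) we have \(\inner{\phi_n}{\mu_k} \to \inner{\phi_n}{\mu}\); a \(3\epsilon\)-argument using density of \((\phi_n)\) in the unit ball of \(C(I_M)\) together with the uniform bound \(\norm{\mu_k-\mu}_{C(I_M)^*} \leq 4C_T\) then upgrades this to \(\inner{\phi}{\mu_k} \to \inner{\phi}{\mu}\) for every \(\phi \in C(I_M)\), which is the \wkst-convergence of interest.

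The only conceptual subtlety, which is really just a bookkeeping step, is reconciling the \wkst-topology defined on \(\posradb{\R_*}\) through pairings with \(C_c(\R_*)\) with the one on \(C(I_M)^*\). Since elements of \(\mathscr{X}_{T,M}\) are supported in the compact set \(I_M\), any \(\phi \in C(I_M)\) can be extended by zero outside \(I_M\) and approximated, in a manner that is controlled against these measures, by \(C_c(\R_*)\) functions, so the two topologies agree on this set. Consequently \(d\) metrizes the \wkst-topology on \(\mathscr{X}_{T,M}\), completing the proof.
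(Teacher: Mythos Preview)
Your proof is correct and takes essentially the same route as the paper: both exploit separability of \(C(I_M)\) (equivalently \(C_c(I_M)\), since \(I_M\) is compact) to build the explicit metric \(d(\mu,\nu)=\sum_n 2^{-n}\abs{\inner{\phi_n}{\mu-\nu}}\) on the norm-bounded set \(\mathscr{X}_{T,M}\). The only cosmetic difference is that the paper dispatches the implication ``\(d\)-convergence \(\Rightarrow\) \wkst-convergence'' by invoking compactness of \(\mathscr{X}_{T,M}\) (a continuous bijection from a compact space to a Hausdorff space is a homeomorphism), whereas you give the direct \(3\epsilon\) argument.
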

	\begin{proof}
		The space \(I_M\) is second countable, (locally) compact and Hausdorff. Therefore the space normed space \(C_c(I_M)\), equipped with the topology induced by the supremum norm, is separable. The proof of the claim now follows along the lines of the proof of Theorem 3.6 in \cite{rudin_functional_1974}. We include it here for completeness, as it also gives recipe for constructing a metric that verifies the metrizability.
		
		We can pick a dense set \(\{\phi_n\}_{n\in \N}\) in \(C_c(I_M)\). Define 
		\begin{align}
		&f_n \colon (C_c(I_M))^* \to \R \nonumber \\
		&f_n(\mu) = \inner{\phi_n}{\mu}.
		\end{align} These are all continuous with respect to the \wkst-topology on the domain. The set \(\{f_n\}_{n\in \N}\) is a countable family of continuous functions that separates points on \((C_c(I_M))^*\).
		
		Define a metric on \(\mathscr{X}_{T,M}\) by setting
		\begin{align}
		\label{defeq:metric-on-X}
		d(\mu, \nu) = \sum_{n=1}^\infty 2^{-n}\abs{f_n(\mu)-f_n(\nu)}.
		\end{align} 
		This is a metric, since the collection \(\{f_n\}_{n\in \N}\) separates points on \(\posradb{I_M}\). Since every \(f_n\) is \wkst-continuous and the series converges uniformly on \(\mathscr{X}_{T,M} \times \mathscr{X}_{T,M}\), it follows that 
		\[d\colon \mathscr{X}_{T,M} \times \mathscr{X}_{T,M} \to [0,+\infty)\] is continuous with respect to the \wkst-topology. The other direction follows from compactness.
	\end{proof}

	\begin{proposition}
	For each \(t \in [0, T]\), the family \(\mathfrak{F}[t]\) has compact closure in \(\mathscr{X}_{T,M}\)
	\end{proposition}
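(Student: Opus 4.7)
The plan is to reduce the claim to Banach--Alaoglu by verifying the single inclusion $\mathfrak{F}_M[t] \subset \mathscr{X}_{T,M}$. Once that inclusion is known, the proposition is immediate: $I_M$ is compact, so $C_c(I_M) = C(I_M)$ is a (separable) Banach space, and $(C(I_M))^*$ is the space of finite signed Radon measures on $I_M$. Taking a nonnegative $\phi \in P_{T,M}$ that approximates the constant function $\tfrac{1}{2C_T}$ on $I_M$ gives, for positive $\mu$, the bound $\|\mu\|_{TV} \leq 2C_T$; hence $\mathscr{X}_{T,M}$ embeds into the closed ball of radius $2C_T$ in $(C(I_M))^*$. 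This ball is \wkst-compact by Banach--Alaoglu, and $\mathscr{X}_{T,M}$ is the intersection of this ball with the \wkst-closed positive cone and with the \wkst-closed sets $\{\mu : |\inner{\phi}{\mu}| \leq 1\}$ for $\phi \in P_{T,M}$; therefore $\mathscr{X}_{T,M}$ itself is \wkst-compact. The closure of any subset is then automatically compact.

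It remains to check $\mathfrak{F}_M[t] \subset \mathscr{X}_{T,M}$. The key input is the uniform-in-$\epsilon$ first-moment estimate already established inside the proof of Proposition \ref{proposition:compactly-supported-solution}. Specifically, the source $\eta_\epsilon = \tfrac{1}{\epsilon}\delta_\epsilon$ has first moment $M_1(\eta_\epsilon) = 1$, independently of $\epsilon \in (0,1)$, so inequality \eqref{ineq:truncation-limit-satisfies-moment-bound} applied to each $f^\epsilon$ yields
\begin{align}
\sup_{\epsilon \in (0,1)} \sup_{t \in [0,T]} M_1(f^\epsilon_t) \leq M_1(f_0) + T \eqqcolon C_T < +\infty.
\end{align}
Now fix $t \in [0,T]$, $\epsilon \in (0,1)$ and $\phi \in P_{T,M}$. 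Because $\phi$ is supported inside $I_M$ and $xf^\epsilon_t$ is a positive measure,
\begin{align}
\bigl|\inner{\phi}{xf^\epsilon(t)\vert_{I_M}}\bigr| \leq \|\phi\|_\infty \int_{I_M} x\, f^\epsilon_t(\rmd x) \leq \frac{1}{2C_T}\, M_1(f^\epsilon_t) \leq \frac{1}{2} \leq 1,
\end{align}
which is precisely the defining condition of $\mathscr{X}_{T,M}$. Hence $xf^\epsilon(t)\vert_{I_M} \in \mathscr{X}_{T,M}$ for every $\epsilon \in (0,1)$, so $\mathfrak{F}_M[t] \subset \mathscr{X}_{T,M}$, and the compactness statement follows.

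There is no genuine obstacle; the only point requiring care is that the first-moment bound must be uniform in the approximation parameter $\epsilon$, which is why it is important that $M_1(\eta_\epsilon) = 1$ rather than blowing up as $\epsilon \to 0^+$. This uniformity is precisely what makes the multiplication by $x$ (i.e.\ passing to the mass measure) the right regularization: without this step, the total mass of $f^\epsilon_t$ itself could diverge as $\epsilon \to 0$, and no analogous uniform control would be available.
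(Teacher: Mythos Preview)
Your proof is correct and follows the same approach as the paper: show $\mathfrak{F}_M[t] \subset \mathscr{X}_{T,M}$ and invoke Banach--Alaoglu to conclude $\mathscr{X}_{T,M}$ is \wkst-compact. The paper's own proof is a one-liner that takes both the inclusion (already asserted in the definition of $\mathfrak{F}_M$) and the compactness of $\mathscr{X}_{T,M}$ as understood, whereas you spell out the uniform-in-$\epsilon$ first-moment bound and the embedding of $\mathscr{X}_{T,M}$ into a norm ball explicitly.
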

	\begin{proof}
	This follows immediately, since the closure of \(\mathfrak{F}[t]\) in the \wkst-topology is a subset of \(\mathscr{X}_{T,M}\) and by the Banach--Alaoglu theorem the latter is a \wkst-compact set.
	\end{proof}
	
	\begin{proposition}
		The closure satisfies \(\overline{\mathfrak{F}} \subset C([0,T], \mathscr{X}_{T,M})\).
	\end{proposition}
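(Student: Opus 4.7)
The plan is to establish that $\mathfrak{F}$ is an equicontinuous family of maps from $[0,T]$ into the metric space $(\mathscr{X}_{T,M}, d)$, with $d$ as in \eqref{defeq:metric-on-X}. Since each individual element $xf^\epsilon\vert_{I_M} \in \mathfrak{F}$ is continuous --- it is obtained by restricting $xf^\epsilon \in C([0,T], \posradb{\R_*})$, which is guaranteed by Proposition \ref{proposition:compactly-supported-solution} --- and since equicontinuity is preserved under pointwise limits in a metric space, any limit point $g \in \overline{\mathfrak{F}}$ inherits a common modulus of continuity and is therefore itself continuous. This places $\overline{\mathfrak{F}}$ inside $C([0,T], \mathscr{X}_{T,M})$, as desired.

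Equicontinuity in $d$ reduces to equicontinuity of the scalar-valued functionals that define it. Since $d(\mu,\nu) = \sum_{n \geq 1} 2^{-n}|\inner{\phi_n}{\mu} - \inner{\phi_n}{\nu}|$ for a countable dense family $\{\phi_n\} \subset C_c(I_M)$, for any $\bar\epsilon > 0$ one can truncate the series at some $N$ at arbitrarily small cost, so it suffices to show, for each $n \leq N$ separately, that the scalar family $\{t \mapsto \inner{\phi_n}{xf^\epsilon_t\vert_{I_M}}\}_{\epsilon \in (0,1)}$ is equicontinuous in $t$ uniformly in $\epsilon$.

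Fix such a $\phi_n$ with $\spt(\phi_n) \subset [a_n, b_n] \subset I_M$ and $a_n > 0$, and set $\psi_n(x) := x\phi_n(x) \in C_c(\R_*)$. For $0 < \epsilon < a_n$, the source contribution $\inner{\psi_n}{\eta_\epsilon} = \psi_n(\epsilon)/\epsilon = \phi_n(\epsilon) = 0$ vanishes, so the weak formulation \eqref{eq:compactly-supported-source} applied to $\psi_n$ gives
\begin{align*}
\inner{\phi_n}{xf^\epsilon_t\vert_{I_M}} - \inner{\phi_n}{xf^\epsilon_s\vert_{I_M}} = \int_s^t \frac{1}{2}\iint D[\psi_n](x,y) K(x,y) f^\epsilon_r(\rmd x) f^\epsilon_r(\rmd y)\, \rmd r.
\end{align*}
Values $\epsilon \in [a_n, 1)$ do not matter for limit points as $\epsilon \to 0$; there the individual continuity of $xf^\epsilon\vert_{I_M}$ is enough. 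Using the identity $D[\psi_n](x,y) = x(\phi_n(x+y) - \phi_n(x)) + y(\phi_n(x+y) - \phi_n(y))$ combined with the symmetry $x \leftrightarrow y$, one can recast the integrand in terms of the mass measure $xf^\epsilon$, which has uniformly bounded total mass $M_0(xf^\epsilon_r) = M_1(f^\epsilon_r) \leq M_1(f_0) + T$.

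The main obstacle is obtaining a bound on the coagulation integral that is uniform in $\epsilon$, given that $f^\epsilon$ itself does not enjoy uniform $M_{\gamma+\lambda}$ or $M_{-\lambda}$ bounds (the source moments $M_p(\eta_\epsilon) = \epsilon^{p-1}$ blow up as $\epsilon \to 0$ whenever $p<1$). The strategy is a regional split of $\R_*^2$: (i) on regions where $x$ or $y$ lies in $[a_n,b_n]$, the integration in that variable sees a compact set bounded away from zero, and $f^\epsilon$ there is uniformly controlled by the mass measure; (ii) on the region $x \in (0,a_n/2)$ with $x+y \in [a_n,b_n]$, the constraint forces $y$ into a fixed compact set bounded below, so the kernel estimate $K(x,y) \leq c_2(x^{\gamma+\lambda} y^{-\lambda} + x^{-\lambda}y^{\gamma+\lambda})$ reduces to a $y$-bounded constant times a power of $x$, and the Lipschitz regularity of $\phi_n$ (which can be assumed via density in $C_c(I_M)$) supplies an extra factor of $x$ through $|D[\psi_n](x,y)| \leq C_n x$ that allows the resulting $x$-integral to be absorbed into the uniformly bounded mass measure; (iii) on the tail $y \gg b_n$, the exponent constraints $\gamma+\lambda < 1$ and $-\lambda < 1$ let one bound the kernel by a constant times $y$, so the contribution is controlled by the uniformly bounded $M_1(f^\epsilon_r)$. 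Summing the three contributions produces $|\inner{\phi_n}{xf^\epsilon_t\vert_{I_M}} - \inner{\phi_n}{xf^\epsilon_s\vert_{I_M}}| \leq C_n|t-s|$ with $C_n$ independent of $\epsilon$, which together with the truncation in the metric $d$ yields the required equicontinuity.
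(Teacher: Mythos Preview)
Your approach is quite different from the paper's. The paper's argument is almost trivial: it fixes a time $t$, a closure element $\overline{F}$, and a test function $\phi \in P_{T,M}$, picks $F_\phi \in \mathfrak{F}$ with $|\langle\phi,\overline{F}_t-F_\phi\rangle| < 1/2$, and concludes $|\langle\phi,\overline{F}_t\rangle| \leq C_T\|\phi\|_\infty + 1/2 \leq 1$. That is, the paper only verifies that time-slices of closure elements lie in $\mathscr{X}_{T,M}$; continuity in $t$ is taken for granted, because the closure is understood inside the complete metric space $C([0,T],\mathscr{X}_{T,M})$ itself. You instead set out to prove equicontinuity of $\mathfrak{F}$, which is more than the proposition asks but is precisely the missing ingredient for the Arzel\`a--Ascoli step that follows.

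However, your argument has a genuine gap in region (ii). You claim that for $x \in (0,a_n/2)$ and $y$ forced into a compact set near $[a_n/2,a_n]$, the Lipschitz bound $|D[\psi_n](x,y)| \leq C_n x$ lets the $x$-integral ``be absorbed into the uniformly bounded mass measure''. Concretely this means bounding
\[
\int_{(0,a_n/2)} x\,K(x,y)\,f^\epsilon_r(\rmd x)
\;\leq\; C_n \int_{(0,a_n/2)} \bigl(x^{\gamma+\lambda}+x^{-\lambda}\bigr)\,xf^\epsilon_r(\rmd x)
\]
uniformly in $\epsilon$, using only $\int x f^\epsilon_r(\rmd x) \leq M_1(f_0)+T$. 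But this requires $x^{\gamma+\lambda}+x^{-\lambda}$ to stay bounded as $x\to 0$, i.e.\ $\gamma+\lambda \geq 0$ and $-\lambda \geq 0$. Assumption~\ref{assumption:kernel-exponents} does \emph{not} guarantee either sign: for instance $\gamma=0,\ \lambda=0.4$ satisfies $|\gamma+2\lambda|<1$ and $\gamma<1$, yet $x^{-\lambda}=x^{-0.4}$ blows up. In such cases the extra factor of $x$ is not enough to compensate the kernel singularity, and the bound you obtain degenerates like $\epsilon^{\gamma+\lambda}$ or $\epsilon^{-\lambda}$ as $\epsilon\to 0$. Your regions (i) and (iii) are fine; the trouble is exactly the small-$x$/moderate-$y$ interaction. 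To close this gap one would need additional control near zero---essentially the time-integrated flux estimates of Section~\ref{subsect:flux-based-estimates}---which yield only a $(t-s)^{1/2}$-type modulus rather than the Lipschitz bound you aim for.
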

	\begin{proof}
	Let \(t \in [0,T]\). Fix \(\overline{F} \in \overline{\mathfrak{F}}\), and a test-function \(\phi \in C_c(\R_*)\). There exists \(F_\phi \in \mathfrak{F}\) such that
	\begin{align}
		\abs{\inner{\phi}{\overline{F}-F_\phi}} < 1/2.
	\end{align}
	
	It follows that
	\begin{align}
	\abs{\inner{\phi}{\overline{F}}} = \abs{\inner{\phi}{F_\phi} + \inner{\phi}{\overline{F}-F_\phi}} \leq C_T\norm{\phi}_\infty + \frac{1}{2}
	\end{align}
	\end{proof}
	
	\begin{corollary}
	If \(g_n \to g\) in the space \(C([0,T], \mathscr{X}_{T,M})\), then
	for all \(\phi \in C_c(\R_*)\) we have
	\begin{align}
	\lim_{n\to +\infty}\sup_{s \in [0,T]} \abs{\inner{\phi}{g_n(s)-g(s)}} = 0.
	\end{align}
	\end{corollary}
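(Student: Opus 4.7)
The plan is to unpack the definition of convergence in \(C([0,T], \mathscr{X}_{T,M})\) via the metric \(d\) of \eqref{defeq:metric-on-X}, and to combine the one-coordinate-at-a-time uniform control this produces with a density approximation in \(C_c(I_M)\), exploiting the uniform dual-norm bound inherited from \(\mathscr{X}_{T,M}\).

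First I would note that since \(I_M = [2^{-M},2^M]\) is compact, restriction turns any \(\phi \in C_c(\R_*)\) into an element of \(C(I_M) = C_c(I_M)\), and the pairing \(\inner{\phi}{\mu}\) only sees this restriction because every \(\mu \in \mathscr{X}_{T,M}\) is supported in \(I_M\). Hence it suffices to prove the claim for \(\phi \in C_c(I_M)\). The hypothesis \(g_n \to g\) in \(C([0,T], \mathscr{X}_{T,M})\) means exactly that \(\sup_{s \in [0,T]} d(g_n(s), g(s)) \to 0\), and since the series defining \(d\) has non-negative terms, for each fixed \(k \in \N\) I can read off the single-coordinate bound
\begin{align*}
\sup_{s \in [0,T]} \abs{\inner{\phi_k}{g_n(s)-g(s)}} \leq 2^{k} \sup_{s \in [0,T]} d(g_n(s), g(s)) \longrightarrow 0.
\end{align*}
So the desired uniform-in-\(s\) convergence already holds whenever the test function lies in the countable dense family \((\phi_k)_{k \in \N} \subset C_c(I_M)\) used to build \(d\).

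Next I would extract the uniform dual-norm bound implied by membership in \(\mathscr{X}_{T,M}\): by the very definition together with homogeneity of the pairing, \(\abs{\inner{\psi}{\mu}} \leq 2 C_T \norm{\psi}_\infty\) for every \(\psi \in C_c(I_M)\) and every \(\mu \in \mathscr{X}_{T,M}\). Given \(\varepsilon > 0\) and a general \(\phi \in C_c(I_M)\), density furnishes \(\phi_{k_0}\) with \(\norm{\phi - \phi_{k_0}}_\infty < \varepsilon\); splitting
\begin{align*}
\inner{\phi}{g_n(s)-g(s)} = \inner{\phi - \phi_{k_0}}{g_n(s)-g(s)} + \inner{\phi_{k_0}}{g_n(s)-g(s)},
\end{align*}
the first term is bounded by \(4 C_T \varepsilon\) uniformly in \((s,n)\) via the dual-norm bound applied to both \(g_n(s)\) and \(g(s)\), while the supremum in \(s\) of the second term tends to \(0\) as \(n \to \infty\) by the previous step. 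Sending first \(n \to \infty\) and then \(\varepsilon \to 0\) finishes the argument.

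The proof is essentially bookkeeping, and I do not expect a real obstacle. The only point worth double-checking is that the family \((\phi_k)\) chosen in the metrizability lemma is genuinely dense in \(C_c(I_M)\), so that an arbitrary restricted test function can be approximated in sup-norm; but this is exactly how that family was selected, so nothing new has to be established.
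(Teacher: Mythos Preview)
Your proof is correct and follows essentially the same approach as the paper: first extract uniform-in-$s$ convergence for the countable dense family $(\phi_k)$ directly from the metric $d$, then extend to arbitrary test functions by a sup-norm density argument combined with the uniform dual-norm bound $\abs{\inner{\psi}{\mu}} \leq 2C_T\norm{\psi}_\infty$ coming from the definition of $\mathscr{X}_{T,M}$. Your write-up is in fact a bit more explicit about the reduction to $C_c(I_M)$ and the exact constant in the dual-norm bound than the paper's, but the argument is the same.
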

	\begin{proof}
	The convergence \(g_n \to g\) in is equivalent to 
	\begin{align}
	\sup_{s\in [0,T]} d(g_n(s),g(s)) \to 0,
	\end{align}
	where \(d\) is defined according to \eqref{defeq:metric-on-X}. It follows that for the countable dense collection \(\mathscr{S} \subset C_c(\R_*)\) of test-functions that defines the metric \(d \colon \mathscr{X}_{T,M} \times \mathscr{X}_{T,M} \to \R_+\), we have that 
	\begin{align}
	\sup_{s\in [0,T]} \abs{\inner{\phi}{g_n(s)-g(s)}} \leq 2^{C_\phi}\sup_{s\in [0,T]}d(g_n(s),g(s))\to 0, \quad \text{ as } n \to +\infty.
	\end{align}
	On the other hand, if \(\psi \in C_c(\R_*)\), then for every \(\epsilon > 0\), we can find an \(m\) such that \(\norm{\psi-\phi_m}_\infty \leq \epsilon\), whereby
	\begin{align}
	\sup_{s\in [0,T]}\abs{\inner{\psi}{g_n(s)-g(s)}} \leq T\epsilon C + \sup_{s\in [0,T]} \abs{\inner{\phi_m}{g_n(s)-g(s)}}, 
	\end{align}
	and so
	\begin{align}
	\limsup_{n \to \infty}\sup_{s\in [0,T]} \abs{\inner{\psi}{g_n(s)-g(s)}} \leq T \epsilon C.
	\end{align}
	Since \(\epsilon\) was arbitrary, we obtain the desired result.
	\end{proof}
		
	\begin{proposition}
	There exists a time-dependent measure \(F \in C([0,T], \mathscr{X}_{T,M})\), and a monotonely decreasing sequence \((\epsilon_{n})_{n=1}^\infty\) with \(\epsilon_n \to 0\), such that
	\begin{align}
	xf^{\epsilon_{n}} \to F, \quad \text{ in } \quad C([0,T], \mathscr{X}_{T,M}),
	\end{align}
	when the target space is equipped with the \wkst-topology.
	\end{proposition}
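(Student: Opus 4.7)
The strategy is to invoke the Arzelà--Ascoli theorem for continuous maps into the metrizable compact space \(\mathscr{X}_{T,M}\), and then extract the required monotone subsequence by a standard diagonal argument. Pointwise--in--time compactness of \(\mathfrak{F}_M\) and metrizability of \(\mathscr{X}_{T,M}\) have already been established in the preceding statements, so the one missing ingredient is \emph{equicontinuity} in \(t\) of the family \(\{xf^{\epsilon}\vert_{I_M}\}_{0<\epsilon<1}\) with respect to the metric \(d\) of \eqref{defeq:metric-on-X}.

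To obtain equicontinuity, fix \(\phi \in C_c(\R_*)\) with \(\spt(\phi) \subset I_M\) and test the weak formulation of Definition \ref{def:compactly-supported-source} against \(\psi(x) = x\phi(x)\), which again lies in \(C_c(\R_*)\). For any \(0 \leq s \leq t \leq T\) this yields
\begin{align*}
\inner{\phi}{xf^{\epsilon}_t\vert_{I_M}} - \inner{\phi}{xf^{\epsilon}_s\vert_{I_M}} = \frac{1}{2}\int_s^t \iint D[\psi](x,y)K(x,y) f^{\epsilon}_r(\rmd x) f^{\epsilon}_r(\rmd y)\, \rmd r + (t-s)\phi(\epsilon).
\end{align*}
The source contribution is bounded by \((t-s)\norm{\phi}_\infty\) uniformly in \(\epsilon\). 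For the coagulation term, boundedness of \(\psi\) together with the kernel estimate \eqref{ineq:kernel-bounds} and the uniform--in--\(\epsilon\) moment bounds on \(M_{\gamma+\lambda}(f^{\epsilon}_r)\) and \(M_{-\lambda}(f^{\epsilon}_r)\) (inherited from the truncated family through Lemma \ref{lemma:truncated-moment-bounds} and the weak-\(*\) lower semicontinuity that underlies the construction in Proposition \ref{proposition:compactly-supported-solution}) give a bound \(C_{\phi,M}\) independent of \(\epsilon\) and \(r\). Consequently \(t \mapsto \inner{\phi}{xf^{\epsilon}_t\vert_{I_M}}\) is Lipschitz in \(t\) with a constant depending only on \(\phi\) and \(M\).

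Converting this pointwise-in-\(\phi\) Lipschitz bound into equicontinuity for \(d\) is routine: since \(d\) is an absolutely convergent weighted sum of evaluations \(f_n = \inner{\phi_n}{\cdot}\) against a countable dense collection of test functions in \(C_c(I_M)\), and since each term is bounded by \(2\) on \(\mathscr{X}_{T,M}\times\mathscr{X}_{T,M}\), given \(\delta>0\) one chooses \(N\) so that \(\sum_{n>N} 2^{-n}\cdot 2 < \delta/2\), and then \(|t-s|\) small enough so that the first \(N\) terms, each controlled by the above Lipschitz estimate applied to \(\phi_1,\dots,\phi_N\), contribute less than \(\delta/2\). The Arzelà--Ascoli theorem for continuous maps into a compact metric space then gives that \(\overline{\mathfrak{F}_M}\) is sequentially compact in \(C([0,T],\mathscr{X}_{T,M})\).

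Applying this compactness to the sequence \(\{xf^{1/n}\vert_{I_M}\}_{n \in \N}\) yields a subsequence \(xf^{\epsilon_n}\vert_{I_M} \to F\) with \(\epsilon_n\) of the form \(1/n_k\), hence automatically monotonically decreasing to zero, and a limit \(F \in C([0,T], \mathscr{X}_{T,M})\). The main technical step is the uniform equicontinuity estimate, which in turn hinges on transferring the \(M_{\gamma+\lambda}\) and \(M_{-\lambda}\) bounds from the truncated solutions to the \(f^\epsilon\); once those bounds are in hand, the rest is a standard application of the Arzelà--Ascoli theorem in a metrizable setting combined with a subsequence extraction.
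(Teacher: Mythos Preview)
Your overall structure --- pointwise compactness plus equicontinuity plus Arzel\`a--Ascoli, followed by subsequence extraction --- matches the paper's approach. However, your equicontinuity argument contains a genuine gap.

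You claim that the moment bounds \(M_{\gamma+\lambda}(f^{\epsilon}_r)\) and \(M_{-\lambda}(f^{\epsilon}_r)\) are uniform in \(\epsilon\), inherited from Lemma~\ref{lemma:truncated-moment-bounds}. But the constant in that lemma depends explicitly on \(\eta\) through \(M_{\gamma+\lambda}(\eta)\) and \(M_{-\lambda}(\eta)\). For the sources \(\eta_\epsilon = \frac{1}{\epsilon}\delta_\epsilon\) one computes
\[
M_{\gamma+\lambda}(\eta_\epsilon) = \epsilon^{\gamma+\lambda-1}, \qquad M_{-\lambda}(\eta_\epsilon) = \epsilon^{-\lambda-1},
\]
and since Assumption~\ref{assumption:kernel-exponents} forces \(\gamma+\lambda<1\) and \(-\lambda<1\), both blow up as \(\epsilon \to 0\). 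The same issue afflicts the initial data: \(f_0\) is only assumed to have finite first moment, and \(M_{\gamma+\lambda}(f_0\vert_{[\epsilon,\infty)}) \leq \epsilon^{\gamma+\lambda-1} M_1(f_0)\) is the best available bound. Consequently your Lipschitz constant \(C_{\phi,M}\) is not independent of \(\epsilon\), and the argument does not yield uniform equicontinuity of \(\mathfrak{F}_M\). The only moment that \emph{is} uniform in \(\epsilon\) is the first moment \(M_1(f^\epsilon_t)\leq M_1(f_0)+t\); any correct equicontinuity argument has to be built on that, together with the support restriction to \(I_M\) and the flux estimates of Section~\ref{subsect:flux-based-estimates}, rather than on the \((\gamma+\lambda)\) and \((-\lambda)\) moments. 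The paper itself is terse at this step, simply invoking Arzel\`a--Ascoli, but your attempt to supply the missing estimate goes through quantities that are not uniformly bounded.

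A smaller point: the paper's proof does more than the proposition literally states --- it runs a diagonal argument over the scales \(M\in\N\) to produce a single subsequence \((\epsilon^n(n))_n\) and a global limit \(F\) on all of \(\R_*\) satisfying \(F\vert_{I_M}=F^M\) for every \(M\). Your argument treats only a single fixed \(M\); this suffices for the statement as written but not for the use made of the proposition immediately afterwards.
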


	\begin{proof}
	Note that \(\mathscr{X}_{T,M}\) is closed in the \wkst-topology. By Arzela--Ascoli, it follows that the closure of \(\mathfrak{F}_M\) is sequentially compact. Therefore, there exists a sequence \((\epsilon_n)_{n=1}^\infty\) and a limit point \(F^{M} \in C([0,T], \mathscr{X}_{T,M})\) such that
	\begin{align}
	xf_{\epsilon_n}\vert_{I_M} \to F^M, \quad \text{ in } \quad C([0,T], \mathscr{X}_{T,M}).
	\end{align}
	
	Start from \(M = 1\) and let \(s^1 = (\epsilon^1(n))_{n\in \N}\) be the sequence along which we converge to the limit. We can then consider \(M=2\) and find a subsequence \(s^2 = (\epsilon^2(n))_{n\in \N}\) such that \(xf_{\epsilon^2(n)}\vert_{I_2}\) converges to a limit, which we denote by \(F^2 \in C([0,T], \posradb{I_2}) \subset C([0,T], \posradb{\R_*})\). Moreover, \(F^2 \vert_{I_1} = F^1\).
	
	Following this procedure, we find a diagonal sequence \((\epsilon'_n)_{n=1}^\infty = (\epsilon^n(n))_{n=1}^\infty\) and a family of time-dependent measures \(\{F^M\}_{M=1}^\infty\) such that \[xf_{\epsilon'_n}\vert_{I_M} \to F^M\] for all \(M\), and \(F^M\vert_{I_k} = F^k\) for all \(k \leq M\).	
	
	Now define
	\begin{align}
	F \colon [0,T] \to \posradb{\R_*}
	\end{align}
	by setting, for each \(\phi \in C_c(\R_*)\)
	\begin{align}
	\label{defeq:definition-of-diagonal-limit-measure}
	\inner{F_t}{\phi} = \lim_{M\to \infty}\inner{F^M_t}{\phi}.
	\end{align}
	For each \(t \in [0,T]\), \(\phi \in C_c(\R_*)\), the sequence \((\inner{F^M_t}{\phi})_{M\geq 1}\) is eventually constant, so the limit exists.
	
	By the Riesz--Markov--Kakutani theorem, the assingment \eqref{defeq:definition-of-diagonal-limit-measure} defines a positive Radon measure pointwise in time. Moreover, for each \(\phi\), there exists \(M_0 \in \N_1\) such that for all \(t\in [0,T]\), \(\inner{F_t}{\phi} = \inner{F^M_t}{\phi}\) whenever \(M \geq M_0\). Therefore, picking \(M \geq M_0(\phi)\), we get 
	\begin{align}
	\inner{F_t-F_s}{\phi} = \inner{F_t}{\phi}-\inner{F_s}{\phi} =  \inner{F_t^M}{\phi}-\inner{F_s^M}{\phi} = \inner{F_t^M-F_s^M}{\phi}.
	\end{align}
	
	The term on the right hand side of the above chain of equations satisfies the continuity at the point \(t \in [0,T]\):
	\begin{align}
	\lim_{s\to t} \inner{F_t^M-F_s^M}{\phi} = 0.
	\end{align}
	This establishes the continuity in time of \(F\) from \([0,T]\) to the \wkst-topology of \(\posradb{\R_*}\).
	\end{proof}
	\begin{remark}
	The convergence result implies that for every \(\phi \in C_c(\R_*)\), we have
	\begin{align}
	\label{convergence-eq:epsilon-limit}
	\sup_{t \in [0,T]} \abs{\inner{\phi}{F_t-xf^{\epsilon_n}_t}} \to 0, \quad n \to \infty.
	\end{align}
	\end{remark}
	
	From this point on, we will denote by \(f\) the time-dependent measure \(f \in C([0,T], \posrad{\R_*})\) that satisfies for all \(t \in [0,T]\)
	\begin{align}
	\int_{\R_*} \phi(x) F_t(\rmd x) = \int_{\R_*} x\phi(x)f_t(\rmd x).
	\end{align}
	In other words, \(xf_t = F_t\).

	\subsection{Flux estimates}
	\label{subsect:flux-based-estimates}
	
	In this section, we will use the a priori estimates coming from the flux formulation to control certain averaged and time-integrated moments of a solution.
	Recall that in what follows \(\Omega_z = \{(x,y) \in \R_*^2 \colon x < z,\quad y\geq z-x\}\), and that for a measure \(\mu\) supported in \([a,+\infty)\) with \(a>0\) and with a finite first moment, the flux of this measure at point \(z \in \R_*\) is given by
	\begin{align}
	J_{\mu}(z) \coloneqq \iint_{\Omega_z}xK(x,y) \mu(\rmd x)\mu(\rmd y).
	\end{align}

	The measures \(xf_t^\epsilon\) have first moments that are bounded uniformly in \(t \in [0,T]\). Also the supports lie inside \([\epsilon,+\infty)\). Thus, the flux corresponding flux terms \(J_{f_t^\epsilon}(z)\) are finite for all \(t\) and all \(z\). However, for fixed \(t\) and \(z\), the supremum over \(\epsilon\) might be infinite.
	
	The following result shows that solutions to the coagulation equation with a compact support, whose existence is obtained via compactness arguments, satisfies a flux equation in the following stronger sense.
	\begin{proposition}
	For each \(\epsilon > 0\), the time-dependent measure \(f^\epsilon \in C([0,T], \posradb{\R_*})\) satisfies for every \(t \in [0,T]\), and every \(z \in \R_*\) the following equation.
	\begin{align}
	\label{eq:compactly-supported-flux-eq}
	\int_{(0,z]} xf_t^\epsilon(\rmd x) - \int_{(0,z]} xf_0^\epsilon(\rmd x) = -\int_{0}^t J_{f_s^\epsilon}(z) \rmd s + t\cf{z \geq \epsilon}
	\end{align}
	\end{proposition}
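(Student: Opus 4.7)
The plan is to apply the weak formulation of Definition \ref{def:compactly-supported-source} with a time-independent test function that approximates \(x\cf{(0,z]}(x)\), and then pass to the limit. For each small \(\delta>0\), I take a continuous piecewise-linear cutoff \(\chi_\delta \in C_c(\R_*)\) with \(\chi_\delta \equiv 1\) on \([\epsilon/2,z]\) and \(\chi_\delta \equiv 0\) outside \([\epsilon/4,z+\delta]\), and set \(\phi_\delta(x) = x\chi_\delta(x)\). Since \(\phi_\delta \in C_c(\R_*)\) and is time-independent, it is admissible in \eqref{eq:compactly-supported-source}.

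For the mass terms on the left-hand side, \(\chi_\delta \to \cf{(0,z]}\) pointwise on the support \([\epsilon,+\infty)\) of \(f^\epsilon_t\), and since \(xf^\epsilon_t \in \posradb{\R_*}\) with uniformly bounded total mass on \([0,T]\), dominated convergence yields \(\int_{\R_*} x\chi_\delta f^\epsilon_t(\rmd x) \to \int_{(0,z]} xf^\epsilon_t(\rmd x)\), and similarly for the initial data. For the source, \(\int_0^t \int \phi_\delta \eta_\epsilon\, \rmd s = \int_0^t \chi_\delta(\epsilon)\, \rmd s\); this equals \(t\) once \(\delta<z-\epsilon\) in the case \(z \geq \epsilon\), and equals \(0\) once \(\delta<\epsilon-z\) in the case \(z<\epsilon\), so it converges to \(t\cf{z \geq \epsilon}\).

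For the coagulation term, a direct case analysis of \(D[x\cf{(0,z]}](x,y) = (x+y)\cf{(0,z]}(x+y)-x\cf{(0,z]}(x)-y\cf{(0,z]}(y)\) shows it vanishes on \(\{x+y\leq z\}\) and \(\{x,y>z\}\), equals \(-(x+y)\) on \(S_1 = \{x,y\leq z,\ x+y>z\}\), equals \(-x\) on \(S_2 = \{x\leq z,\ y>z\}\), and equals \(-y\) on \(S_3 = \{x>z,\ y\leq z\}\). Using the symmetry of \(K(x,y)f^\epsilon_s(\rmd x)f^\epsilon_s(\rmd y)\), one has \(\iint_{S_3} yKff = \iint_{S_2} xKff\) and \(\iint_{S_1}(x+y)Kff = 2\iint_{S_1}xKff\), so the half-integral collapses to \(-\iint_{S_1 \cup S_2} xKff = -\iint_{\Omega_z}xK(x,y)f^\epsilon_s(\rmd x)f^\epsilon_s(\rmd y) = -J_{f^\epsilon_s}(z)\), since \(S_1 \cup S_2 = \{x\leq z,\ y>z-x\} = \Omega_z\).

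The main obstacle is securing dominated convergence as \(\delta \to 0^+\) for the coagulation term, both in space and in time. I use the uniform pointwise bound \(\abs{D[\phi_\delta]} \leq 3(z+\delta)\) together with the key estimate \(K(x,y) \leq C_{\epsilon,z}(1+y)\) for \((x,y) \in [\epsilon,z+\delta]\times [\epsilon,+\infty)\), which is available because the assumptions \(-\lambda<1\) and \(\gamma+\lambda<1\) make both \(y^{-\lambda}\) and \(y^{\gamma+\lambda}\) dominated by \(C_\epsilon(1+y)\) on \([\epsilon,+\infty)\), while \(x^{\pm(\gamma+\lambda)}\) and \(x^{\pm\lambda}\) are bounded on the compact set \([\epsilon,z+\delta]\). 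The resulting dominating function \((1+y)f^\epsilon_s(\rmd y)\) is integrable thanks to the uniform zeroth- and first-moment bounds, and the time integrability uses the moment conditions \(\int_0^T M_{-\lambda}(f^\epsilon_s)\rmd s, \int_0^T M_{\gamma+\lambda}(f^\epsilon_s)\rmd s < +\infty\) built into Definition \ref{def:compactly-supported-source}, combined with Lemma \ref{lemma:coagulation-part-estimate}. Passing to the limit \(\delta \to 0^+\) term by term then yields \eqref{eq:compactly-supported-flux-eq} for every \(t \in [0,T]\) and every \(z \in \R_*\).
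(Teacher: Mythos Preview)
Your proof is correct and follows essentially the same strategy as the paper: approximate \(x\cf{(0,z]}\) by a compactly supported test function, plug into the weak formulation \eqref{eq:compactly-supported-source}, and pass to the limit via dominated convergence, using that \(\spt(f^\epsilon_t)\subset[\epsilon,+\infty)\). Your argument is in fact more explicit than the paper's on two points---the case analysis identifying \(\tfrac12\iint D[x\cf{(0,z]}]\,K\,f^\epsilon_s f^\epsilon_s\) with \(-J_{f^\epsilon_s}(z)\) via the decomposition \(\Omega_z=S_1\cup S_2\), and the construction of a \(\delta\)-independent dominating function from the bound \(K(x,y)\le C_{\epsilon,z}(1+y)\) on \([\epsilon,z+\delta]\times[\epsilon,+\infty)\) together with the uniform \(M_0,M_1\) bounds---whereas the paper records only the coarser bound \(\abs{D[\phi^{\epsilon,z}_\delta]}\le \phi^{\epsilon,z}_\delta(x+y)+\phi^{\epsilon,z}_\delta(x)+\phi^{\epsilon,z}_\delta(y)\) and asserts the pointwise limit directly.
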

	\begin{proof}
	
	We will use the weak formulation \eqref{eq:compactly-supported-source}, which the time-dependent measure \(f^\epsilon\) satisfies. In order to do this, we need to approximate the function \(x \mapsto \cf{(0,z]}(x)\) by a continuous, compactly supported function. But since the time-evolved measure satisfies \(\spt(f_t^\epsilon) \subset [\epsilon,+\infty)\) for all \(t \in [0,T]\), we only need to approximate \(x \mapsto \cf{[\epsilon,z]}(x)\). Moreover, we may assume that \(\epsilon < z\), since otherwise all of the involved integrals vanish.
	
	 To this end, define \(\phi \colon \R_* \to \R\) by 
	 \begin{align}
	 \phi^{\epsilon,z}_\delta(x) = \begin{cases}
	 0, \quad &x \leq \epsilon-\delta \\
	 \delta^{-1}(x-(\epsilon-\delta)), \quad &\epsilon-\delta < x \leq \epsilon\\
	 1, \quad &\epsilon < x \leq z \\
	 1-\delta^{-1}(x-z), \quad &z < x \leq z+\delta \\
	 0, \quad &x > z+\delta. 
	 \end{cases}
	 \end{align} Now \(\phi^{\epsilon,z}_\delta \in C_c(\R_*)\), so we may apply the weak formulation \eqref{eq:compactly-supported-source} to it. This yields: 
	\begin{align}
	\int_{\R_*} \phi^{\epsilon,z}_\delta(x) f_t^\epsilon(\rmd x) - \int_{\R_*} \phi^{\epsilon,z}_\delta(x)f_0^\epsilon(\rmd x) &= \frac{1}{2}\int_{0}^t\iint_{\R_*^2} D[\phi^{\epsilon,z}_\delta](x,y)K(x,y)f_s^\epsilon(\rmd x)f_s^\epsilon(\rmd y) \nonumber \\
	&+ \int_{0}^t \int_{\R_*} \phi^{\epsilon,z}_\delta(x)\eta_\epsilon(\rmd x)
	\end{align}
	
	Since we have the pointwise estimate \(\abs{D[\phi^{\epsilon,z}_\delta](x,y)} \leq \phi^{\epsilon,z}_\delta(x+y) + \phi^{\epsilon,z}_\delta(x) + \phi^{\epsilon,z}_\delta(x)\) with this upper bound satisfying the integrability condition
		\begin{align} &\int_{0}^t \!\!\iint_{\R_*^2} \phi^{\epsilon,z}_\delta(x+y)K(x,y)f_s^\epsilon(\rmd x)f_s^\epsilon(\rmd y)
	+ 2\int_{0}^t \!\!\iint_{\R_*^2}\phi^{\epsilon,z}_\delta(x)K(x,y)f_s^\epsilon(\rmd x)f_s^\epsilon(\rmd y) 	\leq C_{T,\epsilon},
	\end{align}
	and the pointwise convergence \(D[\phi^{\epsilon,z}_\delta](x,y) \to -x\cf{(x,y) \in \Omega_z}\) holds for all \((x,y) \in [\epsilon,+\infty)^2\),
	dominated convergence theorem allows us to conclude that
	\begin{align}
	\int_{\R_*} x\cf{(0,z]} f_t^\epsilon(\rmd x) - \int_{\R_*}x\cf{(0,z]}f_0^\epsilon(\rmd x) = -\int_{0}^t \rmd s J_{f_s^\epsilon}(z)  + \cf{z\geq \epsilon}t.
	\end{align}
	
	\end{proof}
	
	\begin{lemma}
	\label{lemma:power_estimate_f_eps}
	Let \(f_t^\epsilon \in C([0,T], \posradb{\R_*})\) be a solution to \eqref{eq:compactly-supported-source} according to Definition \ref{def:compactly-supported-source}. Then there exists a constant \(C_T\) that along \(T\) and depends on the first moment on \(f_0\), but does not depend on \(R\) or \(\epsilon\), such that the following time-integrated avarage moment bound holds for all \(R,\epsilon > 0\)
	\begin{align}
	\label{ineq:epsilon-time-average-weighted-R-integral}
	\int_{0}^t \left(\frac{1}{R}\int_{[R/2,R]} R^{\frac{\gamma+3}{2}} f^\epsilon_s(\rmd x)\right)^2 \rmd s \leq C
	\end{align}
	\end{lemma}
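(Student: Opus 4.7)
The plan is to squeeze the quantity on the left of \eqref{ineq:epsilon-time-average-weighted-R-integral} between two instances of the mass flux \(J_{f_s^\epsilon}(R)\): an upper bound coming from the flux formulation \eqref{eq:compactly-supported-flux-eq}, and a lower bound obtained by isolating the diagonal dyadic block of the flux integrand. First I would observe that if \(R < \epsilon\) the estimate is trivial, since \(\spt(f_s^\epsilon) \subset [\epsilon,+\infty)\) forces \(\int_{[R/2,R]} f_s^\epsilon(\rmd x) = 0\). So from now on assume \(R \geq \epsilon\).

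The upper bound comes straight from \eqref{eq:compactly-supported-flux-eq}: since \(x f_t^\epsilon \geq 0\) and \(\spt(f_0^\epsilon) \subset [\epsilon,+\infty)\), solving the equation for the time-integrated flux yields
\begin{align}
\int_0^t J_{f_s^\epsilon}(R)\rmd s \;=\; t\,\cf{R \geq \epsilon} + \int_{(0,R]} x f_0^\epsilon(\rmd x) - \int_{(0,R]} x f_t^\epsilon(\rmd x) \;\leq\; T + M_1(f_0),
\end{align}
a bound which is uniform in \(R \geq \epsilon\) and in \(\epsilon \in (0,1)\).

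For the lower bound, I would restrict the flux double integral to the block \(B_R \coloneqq [R/2,R]^2\) and check two things. First, \(B_R \subset \Omega_R\): indeed, for \((x,y) \in B_R\), \(x \leq R\) and \(y \geq R/2 \geq R-x\), so \((x,y) \in \Omega_R\) up to a set of measure zero on its boundary. Second, uniformly on \(B_R\) we have \(x K(x,y) \geq c\, R^{\gamma+1}\) for some constant \(c = c(c_1,\gamma,\lambda)\). This is a short case analysis on the signs of \(\gamma+\lambda\) and \(-\lambda\): in all cases the lower bound in \eqref{ineq:kernel-bounds} gives \(K(x,y) \geq c_1'(R^{\gamma+\lambda} R^{-\lambda} + R^{-\lambda} R^{\gamma+\lambda}) = 2 c_1' R^{\gamma}\) after replacing each factor by either \(R\) or \(R/2\) as dictated by the sign of its exponent, and then the prefactor \(x \geq R/2\) produces the extra power of \(R\). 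Together these give
\begin{align}
J_{f_s^\epsilon}(R) \;\geq\; \iint_{B_R} x K(x,y) f_s^\epsilon(\rmd x) f_s^\epsilon(\rmd y) \;\geq\; c\, R^{\gamma+1} \left(\int_{[R/2,R]} f_s^\epsilon(\rmd x)\right)^{\!2}.
\end{align}

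Integrating this inequality in \(s \in [0,t]\) and combining with the flux upper bound yields
\begin{align}
\int_0^t R^{\gamma+1}\left(\int_{[R/2,R]} f_s^\epsilon(\rmd x)\right)^{\!2} \rmd s \;\leq\; \frac{T + M_1(f_0)}{c},
\end{align}
and the left-hand side is exactly the quantity in \eqref{ineq:epsilon-time-average-weighted-R-integral} after recognising \(R^{\gamma+1} = R^{-2} \cdot R^{\gamma+3}\) and pulling the powers of \(R\) inside. The main obstacle is not any single step but the bookkeeping for the kernel lower bound: one has to handle the four sign combinations of \(\gamma+\lambda\) and \(-\lambda\) to guarantee that the constant \(c\) depends only on \(c_1, \gamma, \lambda\) and not on \(R\) or \(\epsilon\). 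Everything else is automatic from \eqref{eq:compactly-supported-flux-eq} and the support property of \(f^\epsilon\).
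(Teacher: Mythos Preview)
Your proposal is correct and follows essentially the same argument as the paper: bound the time-integrated flux from above using \eqref{eq:compactly-supported-flux-eq} and positivity of \(xf_t^\epsilon\), bound the flux from below by restricting to the diagonal block \([R/2,R]^2\) and invoking the kernel lower bound to extract \(R^{\gamma+1}\), then combine. Your write-up is in fact slightly more careful than the paper's, which omits the trivial case \(R<\epsilon\), the verification that \([R/2,R]^2 \subset \Omega_R\) (up to boundary), and the sign bookkeeping for the kernel exponents.
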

	\begin{remark}
	Up to a loss of a multiplicative constant, we may also say that
	\begin{align}
	\sup_{R>0}\int_{0}^t \left(\frac{1}{R}\int_{[R/2,R]} x^{\frac{\gamma+3}{2}}f^\epsilon_s(\rmd x)\right)^{2} \rmd s \leq C
	\end{align}
	\end{remark}
	\begin{proof}
		
	From \eqref{eq:compactly-supported-flux-eq}, it follows that for every \(R>0\) and every \(t\), we have
	\begin{align}
	\label{eq:compactly-supported-flux-eq-R}
	\int_{(0,R]} xf^\epsilon_t(\rmd x) - \int_{(0,R]} x f_0(\rmd x) &= -\int_{0}^t \rmd s J_{f^\epsilon_s}(R) + \cf{R \geq \epsilon} t.
	\end{align}
	Since the constructed measure \(f_t^\epsilon\) is a positive measure, we can estimate the LHS of \eqref{eq:compactly-supported-flux-eq-R} from below by \(-\int_{(0,R]}xf_0(\rmd x)\), whereby
	\begin{align}
	\int_{0}^t \rmd s J_{f^\epsilon_s}(R) \leq \cf{R\geq \epsilon}t + \int_{(0,R]}x f_0(\rmd x).
	\end{align}
	
	Since
	\begin{align}
	J_{f^\epsilon_s}(R) \geq C' \int_{[R/2,R]} \int_{[R/2,R]} R^{\gamma+1} f^\epsilon_s(\rmd x)f_s(\rmd y) &= C' \left(\int_{[R/2,R]} R^{\frac{\gamma+1}{2}} f^\epsilon_s(\rmd x)\right)^2 \\
	&= C' \left(\frac{1}{R} \int_{[R/2,R]} R^{\frac{\gamma+3}{2}} f^\epsilon_s(\rmd x)\right)^2,
	\end{align}
	it follows that
	\begin{align}
	\label{eq:avg-in-terms-of-M1}
	C' \int_{0}^t \left(\frac{1}{R}\int_{[R/2, R]} R^{\frac{\gamma+3}{2}}f^\epsilon_s(\rmd x)\right)^2 \rmd s \leq (t + M_1(f_0)). 
	\end{align}
	\end{proof}
	\begin{remark}
	By Hölder's inequality, it also follows that for all \(\epsilon>0\), for all \(t \in [0,T]\) and \(R > 0\), the following estimate holds.
	\begin{align}
	\int_{0}^t \frac{1}{R} \int_{[R/2,R]}R^{\frac{\gamma+3}{2}} f_s^\epsilon(\rmd x) \rmd s \leq C_T
	\end{align}
	with the constant \(C_T\) defined as 
	\begin{align}
	\label{eqdef:CT}
	C_T \coloneqq \left(\frac{T+M_1(f_0)}{C'}\right)^{1/2}.
	\end{align}
	\end{remark}

	The average moment estimate shows that -- after integration over time -- the contribution of small values in the measures \(xf^{\epsilon}_s\) vanishes. This is the content of the next lemma.
	\begin{lemma}
	\label{lemma:f_eps_near0}
	There exists a constant \(\bar{C}_T\), depending also on the homogeneity \(\gamma\) and the initial data \(f_0\) through its first moment, such that for all \(\epsilon>0\) and all \(t \in [0,T]\) and all \(x_0 \in \R_*\), the following estimate is satisfied 
	\begin{align}
	\int_{0}^t \int_{(0,x_0]} x f^\epsilon_s(\rmd x)\rmd s \leq \bar{C}_T x_0^{\frac{1-\gamma}{2}}.
	\end{align}
	\end{lemma}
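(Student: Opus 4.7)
The plan is to decompose the interval $(0, x_0]$ into dyadic shells and apply the (Hölder-corollary of the) previous lemma on each shell, then sum the resulting geometric series, which converges because $\gamma<1$.

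More concretely, set $R_n = 2^{-n} x_0$ for $n \geq 0$, so that $(0,x_0] = \bigcup_{n\geq 0} (R_n/2, R_n]$ (disjoint union). The remark following Lemma \ref{lemma:power_estimate_f_eps} gives, uniformly in $R>0$ and $\epsilon>0$,
\begin{align}
\int_0^t \int_{[R/2,R]} f_s^\epsilon(\rmd x)\, \rmd s \leq C_T R^{-\frac{\gamma+1}{2}},
\end{align}
where $C_T$ is the constant defined in \eqref{eqdef:CT}. Since on the shell $[R_n/2, R_n]$ we have $x \leq R_n$, I would estimate
\begin{align}
\int_0^t \int_{(R_n/2, R_n]} x f_s^\epsilon(\rmd x)\, \rmd s \leq R_n \int_0^t \int_{[R_n/2, R_n]} f_s^\epsilon(\rmd x)\, \rmd s \leq C_T R_n^{1-\frac{\gamma+1}{2}} = C_T R_n^{\frac{1-\gamma}{2}}.
\end{align}

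Summing over the dyadic shells and using $R_n^{(1-\gamma)/2} = 2^{-n(1-\gamma)/2} x_0^{(1-\gamma)/2}$,
\begin{align}
\int_0^t \int_{(0,x_0]} x f_s^\epsilon(\rmd x)\, \rmd s \leq C_T x_0^{\frac{1-\gamma}{2}} \sum_{n=0}^\infty 2^{-n \frac{1-\gamma}{2}}.
\end{align}
Because $\gamma < 1$ by Assumption \ref{assumption:kernel-exponents}, the exponent $(1-\gamma)/2$ is strictly positive and the geometric series converges, yielding the claim with
\begin{align}
\bar{C}_T = \frac{C_T}{1 - 2^{-(1-\gamma)/2}}.
\end{align}

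There is no serious obstacle: the only thing to be careful about is that the constant from the Hölder corollary is uniform in both $R$ and $\epsilon$ (which it is, since $C_T$ in \eqref{eqdef:CT} depends only on $T$, $M_1(f_0)$, and the kernel constants), and that the dyadic tail at $n\to\infty$ indeed covers $(0, x_0]$ without missing any mass — this is automatic for a Radon measure on $\R_*$ since $\{0\}$ is excluded from the domain. The condition $\gamma<1$ is precisely what is needed to make the sum converge, so the argument sharply uses Assumption \ref{assumption:kernel-exponents}.
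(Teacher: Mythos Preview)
Your proof is correct and follows essentially the same strategy as the paper: dyadic decomposition of $(0,x_0]$ followed by summation of the resulting geometric series, which converges precisely because $\gamma<1$. The only difference is cosmetic: the paper first applies H\"older in time and then Minkowski's inequality on the dyadic sum inside the $L^2$ norm before invoking the squared estimate \eqref{ineq:epsilon-time-average-weighted-R-integral}, whereas you invoke the $L^1$ corollary from the remark directly on each shell, which is a slightly cleaner route to the same bound and the same constant (up to the harmless $T^{1/2}$ factor already absorbed into $C_T$).
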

	
	\begin{proof}
	Hölder's inequality gives us
	
	\begin{align}
	\int_{0}^t \int_{(0,x_0]} xf_s^\epsilon(\rmd x)\rmd s \leq t^{1/2} \left( \int_{0}^t \left( \int_{(0,x_0]} x f_s^{\epsilon}(\rmd x)\right)^2 \rmd s\right)^{1/2}
	\end{align}
	
	On the other hand,
	\begin{align}
	\int_{(0,x_0]} x f_s^\epsilon(\rmd x) = \sum_{n=0}^\infty \int_{(2^{-(n+1)}x_0, 2^{-n}x_0]} xf_s^\epsilon(\rmd x)
	\end{align}
	and
	\begin{align}
	\int_{(2^{-(n+1)}x_0, 2^{-n}x_0]} xf_s^\epsilon(\rmd x) \leq 2^{-n}x_0 \int_{(2^{-(n+1)}x_0, 2^{-n}x_0]}f_s^\epsilon(\rmd x).
	\end{align}
	By triangle inequality, we have
	\begin{align}
	\left(\int_{0}^t \left(\int_{(0,x_0]} x f_s^\epsilon(\rmd x)\right)^2 \rmd s \right)^{1/2} & \leq \sum_{n=0}^\infty \left(\int_{0}^t \left(2^{-n}x_0\right)^2 \left(\int_{(2^{-(n+1)}x_0, 2^{-n}x_0]} f_s^\epsilon(\rmd x)\right)^2 \rmd s\right)^{1/2}
	\end{align}
	Upon rewriting, for each \(n \geq 0\), the squared dyadic integral as
	\begin{align}
	\left(\int_{(2^{-(n+1)x_0}, 2^{-n}x_0]} f_s^\epsilon(\rmd x)\right)^2 = \left( (2^{-n}x_0)^{1-(3+\gamma)/2} (2^{-n}x_0)^{-1}\int_{(2^{-(n+1)}x_0, 2^{-n}x_0]} (2^{-n}x_0)^{\frac{3+\gamma}{2}}f_s^\epsilon(\rmd x)\right)^2
	\end{align}	and using the boundedness given by \eqref{eq:avg-in-terms-of-M1}, we get
	\begin{align}
	&\sum_{n=0}^\infty \left(\int_{0}^t \left(2^{-n}x_0\right)^2 \left(\int_{(2^{-(n+1)}x_0, 2^{-n}x_0]} f_s^\epsilon(\rmd x)\right)^2 \rmd s \right)^{1/2} \nonumber \\
	&\leq \sum_{n=0}^\infty  \Big(\int_{0}^t \left(2^{-n}x_0\right)^2  (2^{-n}x_0)^{2-(3+\gamma)} \frac{t+M_1(f_0)}{C'}\Big)^{1/2} \nonumber \\
	&\leq \sum_{n=0}^\infty\left( (2^{-n}x_0)^{1-\gamma}\frac{1}{C'}\left(t+M_1(f_0)\right)\right)^{1/2} \leq C_{t,f_0} x_0^{\frac{1-\gamma}{2}}.
	\end{align}
	
	Consequently,
	\begin{align}
	\int_{0}^t \int_{(0,x_0]} x f_s^\epsilon(\rmd x) \rmd s \leq t^{1/2}\left(\int_{0}^t \left(\int_{(0,x_0]}xf_s^\epsilon(\rmd x)\right)^2 \rmd s\right)^{1/2} \rmd s \leq C_{t,f_0}x_0^{(1-\gamma)/2}
	\end{align}
	The constants \(C_{t,f_0}\) are given by
	\begin{align}
	C_{t,f_0} = t^{1/2}\sum_{n=0}^{\infty}(2^{(1-\gamma)/2})^{-n} \left(\frac{t+M_1(f_0)}{C'}\right)^{1/2} = \frac{t^{1/2}}{1-2^{(1-\gamma)/2}}\left(\frac{t+M_1(f_0)}{C'}\right)^{1/2}
	\end{align} are increasing in \(t\), so we may pick 
	\begin{align}
	\label{eq:CT_def}
	\bar{C}_T = \frac{T^{1/2}}{1-\sqrt{2}^{1-\gamma}} C_T = \frac{T^{1/2}}{1-\sqrt{2}^{1-\gamma}}\left(\frac{T+M_1(f_0)}{C'}\right)^{1/2}
	\end{align}	
	\end{proof}
	
	For the statement of the next proposition to make sense, we note that by  \(f \in C([0,T], \posrad{\R_*})\), we denote the measure which is defined pointwise in time as \(\int_{\R_*}\phi(x)F_s(\rmd x) = \int_{\R_*} \phi(x)xf_s(\rmd x)\) for all \(\phi \in C_c(\R_*)\). Essentially, this is the measure \(f_s = F_s/x\).
	
	\begin{proposition}
	There exists a constant \(C'_T > 0\) defined by \eqref{eqdef:CT} bounds the limit time-dependent measure \(f \in C([0, T], \posrad{\R_*})\) in the following sense. For every \(R > 0\), we have
	\begin{align}
	\int_{0}^t \left(\frac{1}{R}\int_{[R/2,R]} R^{\frac{\gamma+3}{2}}f_s(\rmd x)\right)^2 \rmd s \leq C'_T
	\end{align}
	holds for all \(t \in [0,T]\)
	\end{proposition}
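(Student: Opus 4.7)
The plan is to transfer the uniform bound of Lemma \ref{lemma:power_estimate_f_eps} from the sequence $\{f^{\epsilon_n}\}$ to the limit measure $f$ by testing against outer continuous approximations of the indicator $\chi_{[R/2, R]}$ and exploiting the uniform-in-time weak-$*$ convergence recorded in \eqref{convergence-eq:epsilon-limit}. Direct insertion of $\chi_{[R/2, R]}$ is not allowed since it is discontinuous, and the Portmanteau-type inequality for closed sets runs in the wrong direction for obtaining an upper bound on $f_s([R/2, R])$. The remedy is to approximate from outside and absorb the small extra mass into the constant.

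Fix $R > 0$ and choose $M \in \N_1$ large enough that $[R/4, 2R] \subset I_M$. For each $k \in \N_1$ with $1/k < R/4$, construct $\phi^k \in C_c(\R_*)$ satisfying $\phi^k \equiv 1$ on $[R/2, R]$, $0 \leq \phi^k \leq 1$, $\spt(\phi^k) \subset [R/2 - 1/k, R + 1/k] \subset I_M$, and $\phi^k \downarrow \chi_{[R/2, R]}$ pointwise as $k \to \infty$. Since $x^{-1}$ is continuous on $\spt(\phi^k)$, the function $\psi^k(x) := R^{\frac{\gamma+3}{2}} x^{-1} \phi^k(x)$ lies in $C_c(\R_*)$. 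Testing against $xf^{\epsilon_n}$ and invoking \eqref{convergence-eq:epsilon-limit}, the scalar function $s \mapsto \int \phi^k(x) R^{\frac{\gamma+3}{2}} f^{\epsilon_n}_s(\rmd x) = \inner{\psi^k}{xf^{\epsilon_n}_s}$ converges uniformly on $[0, T]$ to $s \mapsto \int \phi^k(x) R^{\frac{\gamma+3}{2}} f_s(\rmd x)$. Being uniformly bounded in $s$, this convergence transfers to the squares and to the $\rmd s$-integral on $[0, t]$, so the $k$-th approximation of the desired quantity for $f$ equals the limit as $n \to \infty$ of the same quantity for $f^{\epsilon_n}$.

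For the right-hand side I use the pointwise inequality $\phi^k \leq \chi_{[R/4, R/2]} + \chi_{[R/2, R]} + \chi_{[R, 2R]}$ and split into three dyadic contributions indexed by $R' \in \{R/2, R, 2R\}$. The $L^2([0, t])$ triangle inequality followed by Lemma \ref{lemma:power_estimate_f_eps} applied at each such $R'$ yields a bound of the form $C_T \cdot \bigl(2^{(\gamma+1)/2} + 1 + 2^{-(\gamma+1)/2}\bigr)^2$, independent of $n$ and $k$. Finally, letting $k \to \infty$, the monotone decreasing convergence $\int \phi^k f_s \downarrow f_s([R/2, R])$ holds pointwise in $s$ by continuity-from-above of the Radon measure $f_s$ on the compact set $[R/2 - 1, R + 1]$, and dominated convergence in $s$ --- with the integrable majorant provided by the uniform-in-$s$ bound on $\int \phi^1 f_s$, itself a consequence of the first moment estimate for $xf_s$ --- yields the claimed inequality for $f$, with $C'_T$ absorbing the bounded multiplicative factor.

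The main technical obstacle is the combined effect of the non-continuity of $\chi_{[R/2, R]}$ and the potential presence of boundary atoms of $f_s$ at $R/2$ or $R$; both are resolved in one stroke by the outer approximation together with the three-dyadic-interval decomposition, at the modest price of an absolute constant that is harmlessly absorbed into $C'_T$.
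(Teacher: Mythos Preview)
Your proof is correct and follows essentially the same approach as the paper: an outer continuous approximation of $\chi_{[R/2,R]}$ supported in $[R/4,2R]$, followed by the three-dyadic-interval decomposition and an application of Lemma~\ref{lemma:power_estimate_f_eps} at each scale $R' \in \{R/2,R,2R\}$. The only cosmetic differences are that the paper passes to the limit via Fatou's lemma rather than your uniform-convergence argument, and does not bother taking $k \to \infty$ since any single outer approximation already majorises $\chi_{[R/2,R]}$; your extra limit is harmless but unnecessary.
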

	\begin{proof}
	From Lemma \ref{lemma:power_estimate_f_eps}, we know that for every \(R\) the upper bound
	\begin{align}
	\sup_{\epsilon >0 }\int_{0}^t \left(\frac{1}{R}\int_{[R/2,R]} R^{\frac{\gamma+3}{2}} f^{\epsilon}_s(\rmd x)\right)^2 \rmd s \leq C_T 
	\end{align}
	is satisfied.
	
	We now fix \(R > 0\) and \(0< \delta < \frac{R}{4}\). Then we pick a smooth, positive function that vanishes outside \([R/4, 2R]\) and is constant \(1\) in \([R/2,R]\). Denote this function by \(\phi_{R,\delta}\).  Now, by positivity of the measures,
	\begin{align}
	\int_{0}^t \left(\int_{[R/2,R]} R^{\frac{\gamma+1}{2}}f_s(\rmd x)\right)^2 \!\!\rmd s \leq \int_{0}^t \left(\int_{\R_*} \phi_{R,\delta}(x)R^{\frac{\gamma+1}{2}}f_s(\rmd x)\right)^2 \rmd s
	\end{align}
	
	On the other hand, by Fatou's lemma
	\begin{align}
	\int_{0}^t \left(\int_{\R_*} \phi_{R,\delta}(x)R^{\frac{\gamma+1}{2}}f_s(\rmd x)\right)^2 \rmd s &\leq \liminf_{n\to \infty} \int_{0}^t \left(\int_{\R_*} \phi_{R,\delta}(x)R^{\frac{\gamma+1}{2}}f^{\epsilon(n)}_s(\rmd x)\right)^2 \rmd s.
	\end{align}
	
	Since \(\phi_{R,\delta}\) is supported in \([R/4,2R]\), we have
	\begin{align}
	\liminf_{n\to \infty} \int_{0}^t \left(\int_{[2^{-K}, 2^K]} \phi_{R,\delta}(x)R^{\frac{\gamma+1}{2}}f^{\epsilon(n)}_s(\rmd x)\right)^2 \rmd s &\leq \liminf_{n\to \infty} \int_{0}^t \left(\int_{[R/4,2R]}R^{\frac{\gamma+1}{2}}f^{\epsilon(n)}_s(\rmd x)\right)^2 \rmd s.
	\end{align}
	
	The integral can be expressed as
	\begin{align}
	\int_{[R/4, 2R]} R^{\frac{\gamma+1}{2}}f_s^{\epsilon(n)}(\rmd x) = \sum_{i=0}^{2} \int_{[2^{-i}R,2^{-i+1}R]} R^{\frac{\gamma+1}{2}}f_s^{\epsilon(n)}(\rmd x) 
	\end{align}
	
	Thus, picking \(C'_T = C_{\gamma,T} = \left(2^{\gamma+1} + 1 + \left(\frac{1}{2}\right)^{\gamma+1}\right) C_T\), we obtain
	\begin{align}
	\int_{0}^t \left(\frac{1}{R}\int_{[R/2,R]} R^{\frac{\gamma+3}{2}}f_s(\rmd x)\right)^2 \rmd s \leq C'_T.
	\end{align}
	\end{proof}
	
	The time-integrated flux of a time-dependent measure can be split into three distinct integrals, depending on what kind of collisions we are considering. To this end, we introduce the following notatio for each \(\delta \in (0,1)\)
	\begin{align}
	\Sigma_1(\delta) &\coloneqq \{(x,y) \in \R_*^2 \colon y \geq \frac{1}{\delta}x\}\\
	\Sigma_2(\delta) &\coloneqq \{(x,y) \in \R_*^2 \colon \delta x < y < \frac{1}{\delta}x\} \\
	\Sigma_3(\delta) &\coloneqq \{(x,y) \in \R_*^2 \colon y \leq \delta x\}.
	\end{align}
	These sets partition the open top right quarter \(\R_*^2\).
	
	Corresponding to each \(\Sigma_i(\delta)\), we write the corresponding contribution to the flux integration domain as
	\begin{align}
	\label{eqdef:flux-term-regions}
	\Omega_z^i(\delta) = \Omega_z \cap \Sigma_i(\delta), \quad i =1,2,3.
	\end{align}
			
	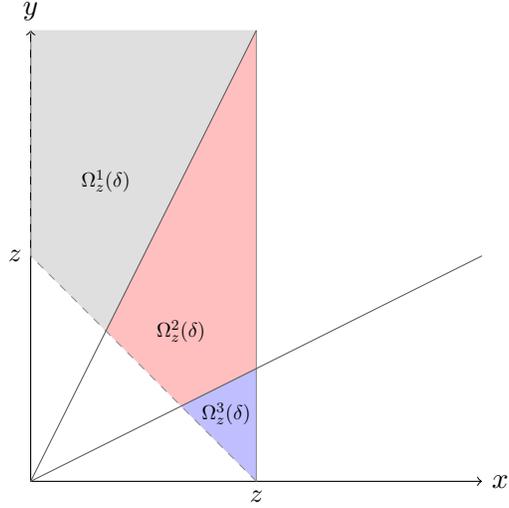
\begin{figure}[ht!]
	\label{figure:flux-term-regions}	
	\centering
	\begin{tikzpicture}
	\fill[red!25] (3,0) -- (0,3) -- (0,6) -- (3,6) |-  cycle;
	\fill[blue!25] (2, 1) -- (3,0) -- (3, 3/2);
	\fill[gray!25] (1, 2) -- (0,3) -- (0, 6) -- (3,6);	
	
		\draw[->] (0,0) -- (6,0) node[right] {$x$};
		\draw[->] (0,0) -- (0,6) node[above] {$y$};
		
		\draw[]
			node[below, scale=0.9] at (3,0) {$z$}
			node[left, scale=0.9] at (0,3) {$z$};
		
		\draw[-, color=black!45] (3,0) -- (3,6);
		\draw[dashed, color=black!45] (3,0) -- (0,3);
		\draw[dashed, color=black!45] (0,3) -- (0,6);
		
		\draw[-, color=black!65] (0,0) -- (6,3);
		\draw[-, color=black!65] (0,0) -- (3,6);
		
		\draw[]
			node[scale=0.7] at (1,4) {$\Omega_z^1(\delta)$}
			node[scale=0.7] at (2,2) {$\Omega_z^2(\delta)$}
			node[scale=0.7] at (2.6,0.9) {$\Omega_z^3(\delta)$}		
			;		
		
	\end{tikzpicture}
	\caption{Illustration of the area of integration \(\Omega_z\) appearing in the flux term, as well as the partition into \(\{\Omega_z \cap \Sigma_i(\delta)\}_{i=1,2,3}\), with \(\delta = 1/2\).}
	\end{figure}
	
	The next proposition tells us that the contributions to the flux term coming from particles in \(\Omega_z^1(\delta)\)	and \(\Omega_z^3(\delta)\) are in some sense small in the limit \(\delta \to 0\).
	
	\begin{proposition}
	\label{prop:J1_J3}
	Suppose that a time-dependent measure \(\mu \in C([0,T], \posrad{\R_*})\) satisfies the following bound for all \(t \in [0,T]\)
	\begin{align}
	\sup_{R>0}\int_{0}^t \left(\frac{1}{R} \int_{[R/2,R]} R^{\frac{\gamma+3}{2}}\mu_s(\rmd x)\right) \rmd s \leq C_T.
	\end{align}
    Assume further that the kernel \(K\), which is used to define the flux terms \(J^i_{\mu_s}\), is continuous and that the exponents of its bounds satisfy  \(\abs{\gamma + 2\lambda}<1\).
    
	Then, there exists a constant \(\tilde{C}_T\) such that  for every \(\epsilon > 0\), there exists \(\delta_\epsilon >0\), such that for every \(\delta < \delta_\epsilon\), and all \(t \in [0,T]\), we have
	\begin{align}
	\label{ineq:flux-smallness-region-one}
	\sup_{z>0}\int_{0}^t J_{\mu_s}^1(z;\delta)\rmd s \leq \epsilon \tilde{C}_T
	\end{align}
	and
	\begin{align}
	\label{ineq:flux-smallness-region-three}
	\sup_{R > 0} \frac{1}{R}\int_{[R/2,R]} \int_{0}^t J_{\mu_s}^3(z;\delta) \rmd s \rmd z \leq \epsilon \tilde{C}_T
	\end{align}		
	\end{proposition}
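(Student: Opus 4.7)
The strategy is a dyadic decomposition of both flux integrals combined with a symmetric factorization of the kernel. From Assumption~\ref{assumption:kernel-regularity} one can write
\begin{align*}
xK(x,y) \leq c_2\bigl[(x/y)^{\alpha} + (x/y)^{\beta}\bigr](xy)^{(1+\gamma)/2},
\end{align*}
with $\alpha = (1+\gamma+2\lambda)/2$ and $\beta = (1-\gamma-2\lambda)/2$; by Assumption~\ref{assumption:kernel-exponents}, both $\alpha,\beta \in (0,1)$ and $\alpha+\beta=1$. This factorization isolates the ratio $x/y$, whose smallness in $\Sigma_1(\delta)$ (respectively of $y/x$ in $\Sigma_3(\delta)$) is precisely the source of the $\delta$-smallness. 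Writing $I_j = [2^{j},2^{j+1}]$ and $m_j(s) \coloneqq 2^{j(\gamma+1)/2}\mu_s(I_j)$, the hypothesis yields $\mu_s(I_j) = 2^{-j(\gamma+1)/2}m_j(s)$ with $\int_0^t m_j(s)\,\rmd s \leq C C_T$. Combined with the squared-in-time moment bound satisfied by the measures to which the proposition is applied (the limit measure $f$ from the preceding proposition), Cauchy--Schwarz gives the uniform bilinear estimate $\int_0^t m_j(s)m_k(s)\,\rmd s \leq C C_T$ for all $j,k$.

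For \eqref{ineq:flux-smallness-region-one}, I decompose $\Omega_z^1(\delta)$ dyadically. On $I_j\times I_k \cap \Sigma_1(\delta)$ one has $k-j \geq N_\delta \coloneqq \lfloor\log_2(1/\delta)\rfloor$, $(x/y)^{\alpha} \leq C\,2^{-(k-j)\alpha}$, and $(xy)^{(1+\gamma)/2} \leq C\,2^{(j+k)(1+\gamma)/2}$. The exponents align with the moment estimates to give
\begin{align*}
\int_0^t\iint_{I_j\times I_k} xK(x,y)\,\mu_s(\rmd x)\mu_s(\rmd y)\,\rmd s \leq C\,C_T\bigl(2^{-(k-j)\alpha} + 2^{-(k-j)\beta}\bigr).
\end{align*}
The constraint $(x,y) \in \Omega_z$ restricts $j \leq \lfloor\log_2 z\rfloor + O(1)$ and $k \geq \lfloor\log_2 z\rfloor - O(1)$, so for each fixed $m = k-j \geq N_\delta$ the admissible $(j,k)$ form an interval of $O(m+1)$ indices. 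Summing gives
\begin{align*}
\sup_{z>0}\int_0^t J_{\mu_s}^1(z;\delta)\,\rmd s \leq C\,C_T \sum_{m\geq N_\delta}(m+1)(2^{-m\alpha}+2^{-m\beta}) \leq C\,C_T\,(N_\delta+1)(\delta^\alpha + \delta^\beta),
\end{align*}
which vanishes as $\delta \to 0$ and yields \eqref{ineq:flux-smallness-region-one}.

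For \eqref{ineq:flux-smallness-region-three}, the $z$-averaging is essential. For fixed $(x,y)$, the condition $(x,y) \in \Omega_z^3(\delta)$ is equivalent to $y \leq \delta x$ and $z \in [x,x+y]$, so by Fubini
\begin{align*}
\int_{[R/2,R]} \mathbf{1}_{(x,y)\in\Omega_z^3(\delta)}\,\rmd z \leq \bigl\lvert [x,x+y]\cap [R/2,R]\bigr\rvert \leq y,
\end{align*}
and this is nonzero only for $x \in [R(1-2\delta)/2,\,R]$. The extra factor $y$ upgrades the kernel bound to $y\cdot xK(x,y) \leq c_2(x^{1+\gamma+\lambda}y^{1-\lambda} + x^{1-\lambda}y^{1+\gamma+\lambda})$. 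Dyadically, on $I_j\times I_k$ with $k \leq j - N_\delta + O(1)$, the moment estimates give
\begin{align*}
\int_0^t \iint_{I_j\times I_k} y\cdot xK(x,y)\,\mu_s(\rmd x)\mu_s(\rmd y)\,\rmd s \leq C\,C_T\bigl(2^{j\alpha}2^{k\beta}+2^{j\beta}2^{k\alpha}\bigr).
\end{align*}
Since the localization pins $2^j \asymp R$ to an $O(1)$ set of indices, and the geometric tails $\sum_{k\leq j-N_\delta} 2^{k\beta} \leq C\,2^{j\beta}\delta^\beta$ (and similarly for $\alpha$) combine with $\alpha+\beta = 1$ to give $2^{j(\alpha+\beta)} = 2^j \asymp R$, dividing by $R$ produces the bound $C\,C_T(\delta^\alpha + \delta^\beta)$ uniformly in $R$, which gives \eqref{ineq:flux-smallness-region-three}.

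The main obstacle is the careful dyadic bookkeeping: identifying precisely which $(j,k)$ contribute under the $\Omega_z$ constraints (for $J^1$) and under the Fubini computation with the indicator (for $J^3$), and verifying that the resulting sums keep the $\delta$-smallness despite the linear multiplicity in $m$ in the $J^1$ case. A secondary subtlety is that the uniform bilinear control $\int_0^t m_j m_k\,\rmd s \leq C\,C_T$ requires the squared-in-time moment bound rather than the bare $L^1$-in-time one; this bound is available for the measures to which the proposition is applied in our construction.
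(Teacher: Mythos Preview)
Your approach is correct and is essentially the one the paper defers to: the paper's proof simply cites Lemma~6.1 of \cite{ferreira_stationary_2021}, whose argument is precisely the dyadic decomposition and kernel factorization $xK(x,y)\le c_2[(x/y)^\alpha+(x/y)^\beta](xy)^{(1+\gamma)/2}$ that you describe, adapted here by carrying along the time integral.

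You have also correctly identified a genuine subtlety in the \emph{statement} of the proposition: the hypothesis as written is only the $L^1$-in-time dyadic bound, but the bilinear control $\int_0^t m_j(s)m_k(s)\,\rmd s \le C$ that the argument needs comes via Cauchy--Schwarz from the $L^2$-in-time bound of Lemma~\ref{lemma:power_estimate_f_eps} and the proposition following it. The paper glosses over this, but in every application within the paper (to $f^\epsilon$ and to the limit $f$) the $L^2$ bound is indeed available, so your observation is a sharpening of the hypotheses rather than an obstruction to the proof.
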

	\begin{remark}
	In the context of our constructed solution \(xf_t\), it follows that the corresponding time-averaged flux is finite for almost every \(z \in \R_*\).
	\end{remark}
	\begin{proof}
	This follows the same steps as the proof of Lemma 6.1 in \cite{ferreira_stationary_2021}, with the only difference in the inclusion of the time-integral and that the upper bound constant depends on \(T\), the endpoint of the time-integral.

	\end{proof}
	
	\begin{proposition}
	The time-dependent measure \(F\) satisfies, for all times \(t \in [0,T]\)
	\begin{align}
    \label{eq:near-zero-L2-estimate}
	\int_{0}^t F_s((0,z])^2 \ \rmd s  \leq C_t(f_0)z^{(1-\gamma)}. 
	\end{align}
	\end{proposition}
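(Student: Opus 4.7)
The approach is to transfer an $L^2$-type bound on the approximating sequence $xf^{\epsilon(n)}$ (which is already implicit inside the proof of Lemma \ref{lemma:f_eps_near0}) to the diagonal limit $F$, by combining weak-$*$ convergence with two successive applications of Fatou's lemma. The main subtlety is that $\cf{(0,z]}$ is not continuous, so it is not directly admissible as a test function for the weak-$*$ convergence $xf_s^{\epsilon(n)} \to F_s$ on $C_c(\R_*)$; the passage to the limit therefore has to be split into two stages, first in $n$ against a continuous cutoff and then in the cutoff parameter.

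The first ingredient is the stronger estimate
\begin{align*}
\int_0^t \left(\int_{(0,z]} x f_s^\epsilon(\rmd x)\right)^2 \rmd s \leq C_{t,f_0}^2 \, z^{1-\gamma},
\end{align*}
which is exactly what comes out of the dyadic Minkowski argument inside the proof of Lemma \ref{lemma:f_eps_near0} before Hölder's inequality is applied: one stops the proof one line earlier and squares. The constant $C_{t,f_0}$ is the one already appearing there, so $C_t(f_0) = C_{t,f_0}^2$ works for the statement.

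The second ingredient is a smooth cutoff. I would take $\phi_{z,\delta} \in C_c(\R_*)$ equal to $1$ on $[\delta, z]$, affine on $[0,\delta]$ and $[z, z+\delta]$ and zero elsewhere, so that $0 \leq \phi_{z,\delta} \leq \cf{(0, z+\delta]}$, $\phi_{z,\delta} \leq 1$, and $\phi_{z,\delta} \to \cf{(0,z]}$ pointwise as $\delta \to 0^+$. Fixing $\delta > 0$, for each $s$ the weak-$*$ convergence $xf_s^{\epsilon(n)} \to F_s$ together with $\phi_{z,\delta} \leq \cf{(0,z+\delta]}$ and the positivity of the measures gives
\begin{align*}
\inner{\phi_{z,\delta}}{F_s}^2 \leq \liminf_{n\to\infty} \left(\int_{(0,z+\delta]} xf_s^{\epsilon(n)}(\rmd x)\right)^2,
\end{align*}
and Fatou's lemma in $s$ combined with the first ingredient yields $\int_0^t \inner{\phi_{z,\delta}}{F_s}^2 \rmd s \leq C_{t,f_0}^2 (z+\delta)^{1-\gamma}$.

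It remains to send $\delta \to 0^+$. Since $F_s$ is a bounded Radon measure and $\phi_{z,\delta}$ is uniformly bounded by $1$ with pointwise limit $\cf{(0,z]}$, dominated convergence against $F_s$ gives $\inner{\phi_{z,\delta}}{F_s} \to F_s((0,z])$ for every $s \in [0,T]$. A second application of Fatou in $s$ then yields $\int_0^t F_s((0,z])^2 \rmd s \leq C_{t,f_0}^2 z^{1-\gamma}$, which is the claim. The only delicate point is the ordering of the two limits $n \to \infty$ and $\delta \to 0^+$: reversing them would require a continuity in $z$ of $F_s$ that need not hold at the (at most countably many) atoms of $F_s$, whereas the order chosen here only uses positivity of the integrands through Fatou and so is admissible for every $z > 0$.
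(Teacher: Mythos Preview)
Your proof is correct and follows the same two-stage limit as the paper—first pass to $F$ against a continuous cutoff using the uniform-in-$\epsilon$ $L^2$ bound extracted from inside Lemma \ref{lemma:f_eps_near0}, then remove the cutoff—only replacing the paper's triangle inequality in $L^2([0,t])$ plus dominated convergence by two applications of Fatou. One small slip: as written your $\phi_{z,\delta}$ is nonzero arbitrarily close to $0$ and hence lies in $C_0(\R_*)$ rather than $C_c(\R_*)$; this is harmless because the first moments are uniformly bounded (so the weak-$*$ convergence extends to $C_0$), or you can simply put the left affine piece on $[\delta/2,\delta]$ instead.
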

	\begin{proof}
	
	We establish the claim by a limit argument concerning the sets \(A_z^\delta = [\delta,z]\). We can pick a continuous and compactly supported function \(\phi_z^\delta \in C_c(\R_*)\) that takes the value \(1\) on \(A_z^\delta\). Then, by positivity of the function and the measure, together with the triangle inequality, the following holds for all \(\epsilon >0\)
	\begin{align}
		\left(\int_{0}^t \left(F_s(A^\delta_z)\right)^2 \rmd s\right)^{1/2} &\leq \left(\int_{0}^t \left(\int_{\R_*}\phi^\delta_z(x)F_s(\rmd x)\right)^2 \rmd s\right)^{1/2} \nonumber \\
		&\leq \left(\int_{0}^t \left(\inner{\phi^\delta_z}{xf_s^\epsilon - F_s}\right)^{2} \rmd s \right)^{1/2} +\left(\int_{0}^t \inner{\phi^\delta_z}{xf_s^\epsilon}^2 \rmd s\right)^{1/2}.
	\end{align}
	
	Consequently, we have the following upper bound for the expression on the left hand side of the above chain of inequalities
	\begin{align}
	\left(\int_{0}^t \left(F_s(A^\delta_z)\right)^2 \rmd s\right)^{1/2} &\leq \limsup_{\epsilon \to 0} \left(\int_{0}^t \inner{\phi^\delta_z}{xf_s^\epsilon}^2 \rmd s\right)^{1/2} \leq C_t(f_0)z^{\frac{1-\gamma}{2}}.
	\end{align}
	
	We have \(F_t((0,z]) = \lim_{\delta \to 0} F_t(A^{\delta}_z)\). Therefore, the dominated convergence theorem gives us the upper bound
	\begin{align}
	\left(\int_{0}^t F_s((0,z])^2  \rmd s\right)^{1/2} & \leq C_{t,f_0}z^{\frac{(1-\gamma)}{2}},
	\end{align}
    from which \eqref{eq:near-zero-L2-estimate} follows with a slight modification of \(C_{t,f_0}\).
	\end{proof}
	
	\subsection{The limit point verifies the flux equation}
	
	We have constructed a time-dependent measure \(f \in C([0,T], \posrad{\R_*})\) as a limit point of the solutions to the coagulation equation with the source terms tending to zero. In the previous section, we also established useful bounds for this limit point and the sequence of time-dependent measures. In this section, we will prove that it is a solution to the coagulation equation with flux from zero, in the sense of Definition \ref{def:weak-flux-solution}. 
	
	\begin{proof}[Proof of Theorem \ref{thm:existence-of-weak-flux-solution}]
	The constructed time-dependent measure \(f \in C([0,T], \posrad{\R_*})\) satisfies condition (i) in Definition \ref{def:weak-flux-solution} already by construction. It remains to show that condition (ii) is satisfied.
				
	Our aim is to prove that for any test function \(\psi \in C_c([0,T] \times \R_*)\), the following identity is satisfied.
	\begin{align}
	\label{eq:test-function-formulation-limit-flux}
	\iint_{[0,T] \times \R_*} \psi(t,z) \left(\int_{(0,z]} x f_t(\rmd x)-\int_{(0,z]} x f_0(\rmd x) + \int_{0}^t J_{f_s}(z) \rmd s - t \right) \rmd t\rmd z = 0.
	\end{align}
	
	It is already know that the corresponding result holds for the solution to the coagulation equation with the compactly supported source \(\frac{1}{\epsilon}\delta_\epsilon\), in the sense that for the solution \(f_t^\epsilon\), we have
	\begin{align}
	\label{eq:test-function-formulation-epsilon-flux}
	\iint_{[0,T] \times \R_*} \psi(t,z) \left(\int_{(0,z]} x f^\epsilon_t(\rmd x)-\int_{(0,z]} x f_0\vert_{[\epsilon,+\infty)}(\rmd x) + \int_{0}^t J_{f^\epsilon_s}(z) \rmd s - t\cf{z\geq \epsilon} \right) \rmd t\rmd z = 0.
	\end{align}	
	
	Thus, if the function
	\begin{align}
	\label{funct-eq:flux-mapping}
	(t,z) \mapsto \int_{(0,z]}xf_t(\rmd x) - \int_{(0,z]}xf_0(\rmd x) + \int_{0}^t J_{f_s}(z) \rmd s - t
	\end{align}
	is measurable and \emph{locally integrable} on \([0,T] \times \R_*\), from equation \eqref{eq:test-function-formulation-limit-flux} it follows that the claim of \eqref{evol-eq:weak-flux-solution} holding for a.e. \((t,z) \in [0,T] \times \R_*\) is true.
	
	Let us first check the measurability of the function defined by \eqref{funct-eq:flux-mapping} with respect to the Lebesgue measure on the Borel \(\sigma\)-algebras \(\mathcal{B}([0,T])\) and \(\mathcal{B}(\R_*)\) respectively. We will do this by considering all of the individual functions appearing in \eqref{funct-eq:flux-mapping} separately. 
	
	Consider first the following function, defiend on \([0,T] \times \R_*\):
	\begin{align}
	(t,z) \mapsto \int_{(0,z]} xf_t(\rmd x).
	\end{align}
	
	We will show that this is measurable by expressing it as a limit of measurable functions. To this end, for each fixed \(\delta >0 \), define the functions \(\tilde{\phi}_\delta, \phi_\delta \colon \R_* \times \R_* \to \R_+\) as 
	
	\begin{align}
	\tilde{\phi}_\delta(x,z) = \begin{cases}
	0, \quad &x<\delta \\
	\min(1,\delta^{-1}\abs{x-\delta}, \delta^{-1}\abs{z+\delta-x}), \quad &\delta \leq x <z+\delta \\
	0, \quad & x \geq z+\delta
	\end{cases}
	\end{align}
		and
	\begin{align}
	\phi_\delta(x,z) 
	= \begin{cases}
	0,\quad &z \leq \delta \\
	\tilde{\phi}_\delta(x,z), \quad &z>\delta 
	\end{cases}
	\end{align}

	It follows from the Dominated Convergence Theorem that for all \(\delta > 0\) and all times \(t\), the function
	\begin{align}
	z \mapsto \int_{\R_*} \phi^\delta(x,z) x f_t(\rmd x)
	\end{align}
	is continuous. On the other hand, for every \(\delta > 0\) and every fixed \(z \in \R_*\), the fact that \(xf_t \in C([0,T], \posradb{\R_*})\) implies that \(t \mapsto \int_{\R_*} \phi_\delta(x,z)xf_t(\rmd x)\) is a continuous function. Together, these  imply that the following function, defined on \([0,T] \times \R_*\) by
	\begin{align}
	(t,z) \mapsto \int_{\R_*} \phi_\delta(x,z) x f_t(\rmd x)
	\end{align}
	is jointly measurable on \({[0,T]} \times {\R_*}\). For all points \((t,z) \in [0,T] \times \R_*\), the Dominated Convergence Theorem shows that
	\begin{align}
	\lim_{\delta \to 0} \int_{\R_*} \phi_\delta(x,z) x f_t(\rmd x) = \int_{\R_*} \cf{(0,z]}(x) x f_t(\rmd x) = \int_{(0,z]} x f_t(\rmd x).
	\end{align} 
	Measurability is preserved under pointwise limits, whereby it follows that the function
	\begin{align}
	(t,z) \mapsto \int_{(0,z]} x f_t(\rmd x)
	\end{align}
	is jointly measurable.

	After this, we still need to show that the function
	\begin{align}
	(t,z) \mapsto \int_{0}^t J_{f_s}(z) \rmd s
	\end{align}
	is jointly measurable. We will simultaneously show that for each \(z \in \R_*\), the function \(J_{f_s}(z)\) is measurable in the time variable \(s \in [0,T]\). The argument here is that by the Monotone Convergence Theorem, applied first to the integral over \(\R_*^2\) and then to the integral over \([0,t]\), the following convergence holds:
	\begin{align}
	\int_{0}^t J_{f_s}(z) \rmd s = \lim_{n\to \infty} \int_{0}^t J^2_{f_s}(z;1/n) \rmd s.
	\end{align}
	Therefore, the problem is reduced to showing that the right hand side of the above identity is measurable.
	
	To this end, recall that \(J_{f_s}^2(z;1/n)\) is defined as
	\begin{align}
	J_{f_s}^2(z;1/n) = \iint_{\Omega_z^2(1/n)} x K(x,y)f_s(\rmd x)f_s(\rmd y),
	\end{align}
	where \(\Omega_z^2(1/n) = \Omega_z \cap \Sigma_2(1/n)\).
	
	For each fixed triple \((z,n,r) \in \R_* \times \N_1 \times \R_*\), we define the set \(\mathcal{A}_{z,n,r} \subset \R_*^2\) as
	\begin{align}
	\mathcal{A}_{z,n,r} = \{(x,y) \in \R_*^2 \colon x+y > z, x < z+r\}\cap \Sigma_{2}(1/n).
	\end{align}
	This approximates the set \(\Omega_z^2(1/n)\) in the limit \(r \to 0\). Figure \ref{figure:A_znr} illustrates one particular instance of such a set.
	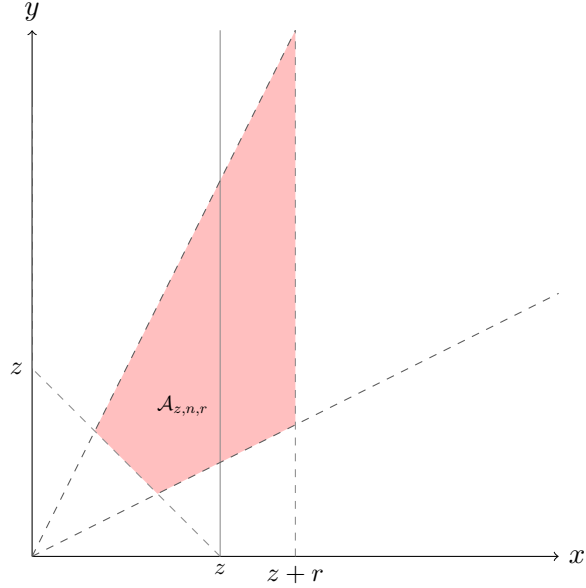
\begin{figure}[ht!]
	\label{figure:A_znr}	
	\centering
	\begin{tikzpicture}[scale=1.0]
	\fill[red!25]  (0.833,1.667) -- (1.667,0.833) -- (3.5, 1.75) -- (3.5, 7) -- (0.833,1.667) |-  cycle;
	
		\draw[->] (0,0) -- (7,0) node[right] {$x$};
		\draw[->] (0,0) -- (0,7) node[above] {$y$};
		
		\draw[]
			node[below, scale=0.8] at (2.5,0) {$z$}
			node[below, scale=0.9] at (3.5,0) {$z+r$}
			node[left, scale=0.9] at (0,2.5) {$z$};
		
		\draw[-, color=black!45] (2.5,0) -- (2.5,7);
		\draw[dashed, color=black!45] (3.5,0) -- (3.5,7);
		\draw[dashed, color=black!45] (2.5,0) -- (0,2.5);
		\draw[dashed, color=black!45] (0,2.5) -- (0,6);
		
		\draw[dashed, color=black!65] (0,0) -- (7,3.5);
		\draw[dashed, color=black!65] (0,0) -- (3.5,7);
		
		\draw[]
			node[scale=0.7] at (2,2) {$\mathcal{A}_{z,n,r}$}
			;		
		
	\end{tikzpicture}
	\caption{Illustration of the set \(\mathcal{A}_{z,n,r} \subset \R_*^2\).}
	\end{figure}
	
	Each \(\mathcal{A}_{z,n,r}\) is an open set and they also satisfy \(\bigcap_{r>0} \mathcal{A}_{z,n,r} = \Omega_z^2(1/n)\).
	We can write \(J_{f_s}^2(z;1/n)\) as the limit
	\begin{align}
	J_{f_s}^2(z;1/n) = \lim_{r\to 0} J_{f_s}^2(z;1/n,r),
	\end{align}
	where the function \(J_{f_s}^2(z;1/n,r)\) is defined as
	\begin{align}
	J_{f_s}^2(z;1/n,r) \coloneqq \iint_{\R_*^2} \cf{(x,y) \in \mathcal{A}_{z,n,r}}xK(x,y)f_s(\rmd x)f_s(\rmd y).
	\end{align}
	
	Let us now construct a sequence of functions \(\phi^{\delta,n,r} \colon \R_*^2 \times \R_* \to \R\) such that for any fized \(z \in \R_*\), the functions \(\phi^{\delta,n,r}(\cdot, z) \colon \R_*^2 \to \R\) are all continuous and compactly supported and such that for all \((x,y) \in \R_*^2\), we have \(\lim_{\delta \to 0} \phi^{\delta,n,r}((x,y),z) \to \cf{(x,y) \in \mathcal{A}_{z,n,r}}\).
	
	Indeed, we can obtain this goal by defining \(\phi^{\delta,n,r} \colon \R_*^2 \times \R_* \to \R\) pointwise as
	\begin{align}
	\phi^{\delta,n,r}((x,y),z) = \min(1,\delta^{-1}d((x,y), \R_*^2 \setminus \mathcal{A}_{z,n,r})).
	\end{align}
	
	At this stage, the parameters \(n,\delta, r\) are all fixed. It follows then from the Dominated Convergence Theorem that for each fixed \(s \in [0,T]\), the function \(\R_* \to \R\) defined pointwise by setting
	\begin{align}
	z \mapsto \iint_{\R_*^2}\phi^{\delta,n,r}((x,y), z) xK(x,y)f_s(\rmd x)f_s(\rmd y),
	\end{align}
	is continuous. On the other hand, for each fixed \(z \in \R_*\), it follows by the fact that the time-dependent measures are in \(C([0,T], \posrad{\R_*})\) that the function
	\begin{align}
	s \mapsto \iint_{\R_*^2} \phi^{\delta,n,r}((x,y),z) xK(x,y)f_s(\rmd x)f_s(\rmd y)
	\end{align}
	is continuous. Hence, the function
	\begin{align}
	(s,z) \mapsto \iint_{\R_*^2} \phi^{\delta,n,r}((x,y),z) xK(x,y)f_s(\rmd x)f_s(\rmd y)
	\end{align}
	is jointly measurable. Recall that measurability is preserved by pointwise limits. On the other hand, the dominated convergence theorem tells us that
	\begin{align}
	J_{f_s}^2(z;1/n,r) = \lim_{\delta \to 0} \iint_{\R_*^2}\phi^{\delta,n,r}((x,y),z) x K(x,y)f_s(\rmd x)f_s(\rmd y).
	\end{align}
	The resulting function is thus jointly measurable. Another application of the dominated convergence theorem gives us
	\begin{align}
	J_{f_s}^2 (z;1/n) = \lim_{r \to 0} J_{f_s}^2(z;1/n,r),
	\end{align}
	and hence the left hand side is jointly measurable. Finally, by the monotone convergence theorem, it follows that
	\begin{align}
	J_{f_s}(z) = \sup_{n\in \N_0}J_{f_s}^2(z; 1/n),
	\end{align}
	and the left hand side is also jointly measurable. Similar arguments show that the function
	\begin{align}
	(t,z) \mapsto \int_{0}^t J_{f_s}(z) \rmd s 
	\end{align}
	is jointly measurable.

	Let us now verify that \eqref{eq:test-function-formulation-limit-flux} is true for any test function \(\psi \in C_c([0,T] \times \R_*)\). The idea is to use \eqref{eq:test-function-formulation-epsilon-flux} and then control the distance between the left hand sides of these equations. To this end, let the cumulative mass of the solutions \(f_t\) and \(f_t^\epsilon\) at time \(t\) be given by
	\begin{align}
	G(t,z) &= \int_{(0,z]} xf_t(\rmd x),\\
	G^\epsilon(z,t) &= \int_{(0,z]} xf^\epsilon_t(\rmd x).
	\end{align}
	
  	For the corresponding terms at time \(t=0\), we use the notation
	\begin{align}
	C(z) &= \int_{(0,z]} x f_0(\rmd x), \\
	C^\epsilon(z) &= \int_{(0,z]} x f_0\vert_{[\epsilon,+\infty)}(\rmd x).
	\end{align}
	
	The source terms are defined by
	\begin{align}
	S(t,z) &= t, \\
	S^\epsilon(t,z) &= t\int_{(0,z]} x \frac{1}{\epsilon}\delta_\epsilon(\rmd) = t\cf{z \geq \epsilon}
	\end{align}
	
	Finally, the flux terms are defined by
	\begin{align}
	H(t,z) &= \int_{0}^t J_{f_s}(z) \rmd s \\
	H^\epsilon(t,z) &= \int_{0}^t J_{f^\epsilon_s}(z) \rmd s.
	\end{align}
		
	Having fixed this notation, let us consider a test function \(\psi \in C_c([0,T] \times \R_*)\). Our aim is to show that
	\begin{align}
	\inner{\psi}{G-C+H-S} = 0.	
	\end{align}
	We do this by using \eqref{eq:test-function-formulation-epsilon-flux}, i.e. the fact that for all \(\epsilon > 0\)
	\begin{align}
	\inner{\psi}{G^\epsilon-C^\epsilon + H^\epsilon - S^\epsilon} = 0.
	\end{align}	
	
	To connect \eqref{eq:test-function-formulation-limit-flux} and \eqref{eq:test-function-formulation-epsilon-flux}, we need to show that the terms
	\begin{align*}
	\abs{\inner{\phi}{G-G^{\epsilon(n)}}}, \abs{\inner{\phi}{C-C^{\epsilon(n)}}}, \abs{\inner{\phi}{H-H^{\epsilon(n)}}}, \abs{\inner{\phi}{S-S^{\epsilon(n)}}} 
	\end{align*}
	can be made arbitrarily small by picking a suitably large \(n\).
	
	So let \(\bar{\epsilon} > 0 \). We can find \(K \in \N\) such that
	\begin{align}
	x \mapsto \int_{0}^T\int_{(x,+\infty)} \psi(t,z) \rmd z \rmd t,
	\end{align}
	when restricted to \([2^{-K}, +\infty)\) is compactly supported and vanishes for \(x \geq 2^{K}\). On the other hand, the compactness of \(\psi\) implies that 
	\begin{align}
	\sup_{t \in [0,T]} \abs{\int_{(x,+\infty)} \psi(t,z) \rmd z} \leq \tilde{C}.
	\end{align} We therefore have the following uniform estimate
	\begin{align}
	\label{ineq:small-K}
	\sup_{\epsilon > 0}  \int_{0}^T\int_{(0,2^{-K}]} \abs{x \left( \int_{(x,+\infty)} \psi(t,z) \rmd z \right)}f_t^{\epsilon}(\rmd x)\rmd t \leq \bar{\epsilon} 
	\end{align}
	and similarly for the limit measure:
	\begin{align}
	\label{ineq:small-K-epsilon}
	\int_{0}^T\int_{(0,2^{-K}]} \abs{x \left( \int_{(x,+\infty)} \psi(t,z) \rmd z \right)}f_t(\rmd x)\rmd t \leq \bar{\epsilon} 
	\end{align}
	
	Therefore, the difference of \(\inner{\psi}{G}\) and \(\inner{\psi}{G^{\epsilon(n)}}\) can be estimated as
	\begin{align}
	\abs{\inner{\psi}{G-G^{\epsilon(n)}}} &\leq 2\bar{\epsilon}
	+\abs{\int_{0}^T \int_{[2^{-K}, 2^K]} \left(\int_{(x,+\infty)}\psi(t,z) \rmd z\right)\left(xf_t(\rmd x)-xf^{\epsilon(n)(\rmd x)}_t\right) \rmd t}
	\end{align}
	
	The convergence of \(xf^{\epsilon(n)}\) to \(xf\) in \(C([0,T], \mathscr{X}_{T,K})\) implies that for each \(\bar{\epsilon}\), there exists \(N_0\) such that
	\begin{align}
	\abs{\inner{\psi}{G-G^{\epsilon(n)}}} \leq 3\bar{\epsilon}
	\end{align}
	for all \(n \geq N_0\). Similarly, there exists a cutoff \(N_0\) such that
	\begin{align}
	\abs{\inner{\psi}{C-C^{\epsilon(n)}}} \leq 3\bar{\epsilon}
	\end{align}
	for all \(n \geq N_0\).
	
	Controlling the source is straightforward, since
	\begin{align}
	&\abs{\int_{0}^T \int_{\R_*} \psi(t,z) t \rmd z \rmd t- \int_{0}^T \int_{\R_*} \psi(t,z) t\int_{(0,z]} x \frac{1}{\epsilon}\delta_{\epsilon}\rmd z \rmd t} \nonumber \\
	&= \int_{0}^T \int_{\R_*} \abs{\psi(t,z) \left(t-t\cf{\epsilon \leq z}\right)} \rmd z \rmd t.
	\end{align}
	As \(t-t\cf{\epsilon(n) \leq z} \to 0\) as \(n \to \infty\) for all \((t,z) \in [0,T] \times \R_*\), it follows from dominated convergence that there exists \(N_0\) such that
	\begin{align}
	\abs{\inner{\psi}{S-S^\epsilon}} \leq \bar{\epsilon}, \quad n \geq N_0.
	\end{align}
	
	Finally, we can split the limit flux term \(H(t,z)\) into three different regions, corresponding to the three sets \(\Omega_z^i(\bar{\delta})\) defined in \eqref{eqdef:flux-term-regions} that partition the set \(\Omega_z\). Therefore,
	\begin{align}
	H(t,z) &= H_1(t,z) + H_2(t,z) + H_3(t,z),
	\end{align}
	with
	\begin{align}
	H_i(z,t) = \int_{0}^t J_{f_s}^i(z;\bar{\delta}) \rmd s, \quad i = 1, 2,3.
	\end{align}
	
	Similarly in the case of the compactly supported solutions, we split the flux term into
	\begin{align}
	H^\epsilon(t,z) &= H^\epsilon_1(t,z) + H^\epsilon_2(t,z) + H^\epsilon_3(t,z),
	\end{align}
	with
	\begin{align}
	H^\epsilon_i(z,t) = \int_{0}^t J_{f^\epsilon_s}^i(z; \bar{\delta}) \rmd s.
	\end{align}
	
	Since \(\psi \in C_c([0,T] \times \R_*)\) is fixed, it follows that there exist \([a,b]\) such that \(\psi(t,z) = 0\) if \(z < a\) or \(z>b\).
	
	Let \(V_x = \{(x,0) \colon x \in \R_*\}\) and \(V_y = \{(0,y) \colon y \in \R_*\}\) be the positive \(x\)- and \(y\)-axes, respectively. Recall that \(\spt(\psi(t,\cdot)) \subset [a,b]\) for all \(t \in [0,T]\). Therefore, for all all \(t \in [0,T]\) and for every point \(z \in \spt(\psi(t,\cdot))\), the distance of points in \(\Omega_z^2(\delta)\) to the \(x\)-axis is bounded from below as follows
	\begin{align}
	\dist(\Omega^2_z(\delta), V_x) \geq \frac{\delta}{1+\delta}a.
	\end{align}
	Similarly, we have the following lower bound for the distance to the \(y\)-axis
	\begin{align}
	\dist(\Omega^2_z(\delta), V_y) \geq \frac{1}{1+\frac{1}{\delta}}a = \frac{\delta}{1+\delta}a
	\end{align}
	
	Let \(\chi_1, \chi_3 \colon \R_*^2 \to \R\) be two continuous, positive functions that satisfy
	\begin{align}
	\cf{\Omega_z^1(\delta)} \leq \chi_1 \leq \cf{\Omega_z^1(\delta/2)}
	\end{align}
	and
	\begin{align}
	\cf{\Omega_z^3(\delta)} \leq \chi_3 \leq \cf{\Omega_z^3(\delta/2)}.
	\end{align}
	
	We may assume that \(\delta < 1\), and therefore \(\chi_1(x,y)\chi_3(x,y) = 0\) for all \((x,y) \in \Omega_z\). Moreover, the function
	\begin{align}
	(x,y) \mapsto \cf{(x,y) \in \Omega_z}(1-\chi_1(x,y))(1-\chi_3(x,y))xK(x,y)
	\end{align} is compactly supported in \(\R_*^2\) for every \(z \in [a,b]\). 
	
	It follows that 
	\begin{align}
	&\abs{\int_{0}^T\int_{\R_*}\psi(t,z) \int_{0}^t \iint_{\Omega_z}xK(x,y)f_s(\rmd x)f_s(\rmd y) \rmd s \rmd z \rmd t} \leq I_1 + I_2 + I_3,
	\end{align}
	with the terms on the right hand side defined as
	\begin{align}
	I_1 = \norm{\psi}_\infty \int_{0}^T \int_{[a,b]} \int_{0}^t \iint_{\Omega_z} \chi_1(x,y)xK(x,y)f_s(\rmd x)f_s(\rmd y) \rmd s \rmd z \rmd t,
	\end{align}
	\begin{align}
	I_2 = \norm{\psi}_\infty \int_{0}^T \int_{[a,b]}\int_{0}^t \iint_{\Omega_z} (1-\chi_1(x,y))(1-\chi_3(x,y))xK(x,y)f_s(\rmd x)f_s(\rmd y) \rmd s \rmd z \rmd t,
	\end{align}	and
	\begin{align}
	I_3 = \norm{\psi}_\infty \int_{0}^T \int_{[a,b]} \int_{0}^t \iint_{\Omega_z} \chi_3(x,y)xK(x,y)f_s(\rmd x)f_s(\rmd y) \rmd s \rmd z \rmd t.
	\end{align}	

	The corresponding integral of the \(\epsilon\)-problem satisfies the upper bound
	\begin{align}
	\abs{\int_{0}^T\int_{\R_*}\psi(t,z) \int_{0}^t \iint_{\Omega_z}xK(x,y)f^{\epsilon}_s(\rmd x)f^\epsilon_s(\rmd y) \rmd s \rmd z \rmd t} &\leq I_1^\epsilon + I_2^\epsilon + I_3^\epsilon,
	\end{align}
	with
	\begin{align}
	I_1^\epsilon \leq \norm{\psi}_\infty \int_{0}^T \int_{[a,b]} \int_{0}^t \iint_{\Omega_z} \chi_1(x,y)xK(x,y)f^\epsilon_s(\rmd x)f^\epsilon_s(\rmd y) \rmd s \rmd z \rmd t ,
	\end{align}
	\begin{align}
	I_2^\epsilon = \norm{\psi}_\infty \int_{0}^T \int_{[a,b]} \int_{0}^t \iint_{\Omega_z} (1-\chi_1(x,y))(1-\chi_3(x,y))xK(x,y)f^\epsilon_s(\rmd x)f^\epsilon_s(\rmd y) \rmd s \rmd z \rmd t,
	\end{align} 
	and
	\begin{align}
	I_3^\epsilon = 
	\norm{\psi}_\infty \int_{0}^T \int_{[a,b]} \int_{0}^t  \iint_{\Omega_z} \chi_3(x,y)xK(x,y)f^\epsilon_s(\rmd x)f^\epsilon_s(\rmd y) \rmd s \rmd z \rmd t.
	\end{align}
	
	Recall that \(\bar{\epsilon} > 0\) is a small number that we have fixed at the start of the computation. By Proposition \ref{prop:J1_J3}, there exists \(\delta_{\bar{\epsilon}}\) so that for all \(\delta \leq \delta_{\bar{\epsilon}}\)
	\begin{align}
	&\sup_{z > 0}\int_{0}^t \rmd s J^1_{f_s}(z,\delta)  \leq \bar{\epsilon} \nonumber \\
	&\sup_{R>0} \int_{[R/2,R]}\int_{0}^t \rmd s J^3_{f_s}(z,\delta) \rmd z \leq \bar{\epsilon}
	\label{eq:smallness-estimates-flux-terms}
	\end{align}
	and
	\begin{align}
	&\sup_{\epsilon>0}\sup_{z > 0}\int_{0}^t \rmd s J^1_{f^\epsilon_s}(z,\delta)  \leq \bar{\epsilon} \nonumber \\
	&\sup_{\epsilon>0}\sup_{R>0} \int_{[R/2,R]}\int_{0}^t \rmd s J^3_{f^\epsilon_s}(z,\delta) \rmd z \leq \bar{\epsilon},
	\label{eq:smallness-estimates-flux-terms-epsilon}
	\end{align}
	Since the kernel is bounded in \(\cup_{z\in [a,b]}\Omega_z^2(\delta)\), we can use the \wkst-convergence together with the Dominated Convergence Theorem to obtain a cutoff point \(N_0 = N_0(\psi,\delta,\bar{\epsilon}) \in \N\) for which the bound
	\begin{align}
	\norm{\psi}_\infty \int_{0}^T \int_{\R_*} \abs{\int_{0}^t \iint_{\Omega_z} (1-\chi_1)(1-\chi_2)xK(x,y)[f_s(\rmd x)f_s(\rmd y)-f^{\epsilon(n)}_s(\rmd x) f^{\epsilon(n)}_s(\rmd y)]} \leq \bar{\epsilon}
	\label{eq:smallness-middle-term}
	\end{align}
	holds for all \(n \geq N_0\).
	
	Altogether the estimates \eqref{eq:smallness-estimates-flux-terms}, \eqref{eq:smallness-estimates-flux-terms-epsilon}, and  \eqref{eq:smallness-middle-term} imply that for all \(n \geq N_0\), there exists \(B>0\) such that 
	\begin{align}
	\abs{\inner{\psi}{H-H^{\epsilon(n)}}} \leq B\bar{\epsilon}.
	\end{align} Consequently, it follows that
	\begin{align}
		\abs{\inner{\psi}{G-C+H-S}} \leq \tilde{C}\bar{\epsilon}.
	\end{align}
	
	This is true for arbitrarily small \(\bar{\epsilon}>0\), whereby \(\inner{\psi}{G-C+H-S} = 0\). Since the function \(G-C+H-S\) is by the same computations a locally integrable function on \([0,T] \times \R_*\), it implies that the function satisfies \(G-C+H-S = 0\) for almost every pair of points \((t,z) \in [0,T] \times \R_*\). 
	\end{proof}

\section{Properties of the solutions}

In this Section we study the properties of the weak flux solutions as defined in \ref{def:weak-flux-solution}.

\begin{assumption}\label{assumption3}
    Assume that the kernel $K$ and the real parameters $\gamma$ and $\lambda$ satisfy Assumptions \ref{assumption:kernel-regularity} and \ref{assumption:kernel-exponents}. Let  \(f_0 \in \posrad{\R_*}\)  satisfy \(xf_0 \in \posradb{\R_*}\) and
     \(f_t \in C([0,T], \posradb{\R_*})\) be a weak flux solution to \eqref{eq:coagulation-equation} in the sense of Definition \ref{def:weak-flux-solution} with initial data $f_0$.
\end{assumption}
	\begin{proposition}
	\label{prop:time-average-flux-bound}
 Assume that Assumption \ref{assumption3} holds. Then there exists a positive constant \(C_T\) that depends on $\gamma$, $\lambda$, $T$, and the initial data, such that the following time-integrated average moment bound holds for all \(t \in [0,T]\)
	\begin{align}
	\label{ineq:time-average-weighted-R-integral}
	\int_{0}^t \frac{1}{R}\int_{[R/2,R]} R^{\frac{\gamma+3}{2}} f_s(\rmd x) \rmd s \leq C_T.
	\end{align}	
\end{proposition}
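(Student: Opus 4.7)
The plan is to mimic the argument used in Lemma \ref{lemma:power_estimate_f_eps} for the approximating sequence \(f^\epsilon\), but to apply it directly to the weak flux solution \(f\) using only the information supplied by Definition \ref{def:weak-flux-solution}. The mechanism is simple: the weak flux identity \eqref{evol-eq:weak-flux-solution} combined with the positivity of \(f\) gives an \emph{a priori} upper bound on the time-integrated flux \(\int_0^t J_{f_s}(R)\,\rmd s\), while the kernel lower bound in \eqref{ineq:kernel-bounds} gives a pointwise lower bound on the flux integrand when restricted to the square \([R/2,R]^2\subset \Omega_R\). Interpolating these two estimates via the Cauchy--Schwarz inequality yields the desired dyadic moment bound.

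First, I would fix \(R>0\) in the full-measure set of \(z\)'s for which \eqref{evol-eq:weak-flux-solution} is valid for almost every \(t\in [0,T]\). Since \(xf_t\) is a positive Radon measure, dropping the term \(\int_{(0,R]} x f_t(\rmd x)\geq 0\) from \eqref{evol-eq:weak-flux-solution} yields
\begin{align}
\int_0^t J_{f_s}(R)\,\rmd s \leq t + \int_{(0,R]} x f_0(\rmd x) \leq T + M_1(f_0),
\end{align}
for almost every \(t\in [0,T]\), and hence for all \(t\in [0,T]\) by the monotonicity of \(t\mapsto \int_0^t J_{f_s}(R)\,\rmd s\) (the map is nondecreasing in \(t\) by positivity of the integrand).

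Second, I would exploit the containment \([R/2,R]^2\subset \Omega_R\), which is immediate from the definition of \(\Omega_z\). For \((x,y)\in [R/2,R]^2\) the lower bound of Assumption \ref{assumption:kernel-regularity} gives
\begin{align}
xK(x,y) \geq c_1\, x\bigl(x^{\gamma+\lambda}y^{-\lambda}+x^{-\lambda}y^{\gamma+\lambda}\bigr) \geq c'_{\gamma,\lambda}\, R^{\gamma+1},
\end{align}
where the constant \(c'_{\gamma,\lambda}>0\) absorbs a finite power of \(1/2\) depending on the signs of \(\gamma+\lambda+1\) and \(-\lambda\). Consequently
\begin{align}
J_{f_s}(R) \geq c'_{\gamma,\lambda}\, R^{\gamma+1}\, f_s([R/2,R])^2.
\end{align}
Combining this with the upper bound above, I obtain
\begin{align}
\int_0^t R^{\gamma+1}\, f_s([R/2,R])^2\,\rmd s \leq \frac{T+M_1(f_0)}{c'_{\gamma,\lambda}}.
\end{align}

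Third, I would apply Cauchy--Schwarz in time to pass from the \(L^2\) estimate to the \(L^1\) estimate:
\begin{align}
\int_0^t R^{\frac{\gamma+1}{2}}\, f_s([R/2,R])\,\rmd s \leq t^{1/2}\biggl(\int_0^t R^{\gamma+1}f_s([R/2,R])^2\,\rmd s\biggr)^{1/2} \leq C_T,
\end{align}
with \(C_T = T^{1/2}\bigl((T+M_1(f_0))/c'_{\gamma,\lambda}\bigr)^{1/2}\). Finally, rewriting \(R^{\frac{\gamma+1}{2}}\, f_s([R/2,R]) = \frac{1}{R}\int_{[R/2,R]} R^{\frac{\gamma+3}{2}}\, f_s(\rmd x)\) produces exactly \eqref{ineq:time-average-weighted-R-integral}. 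The bound was proved for \(R\) in a full-measure set, but both sides are lower/upper semicontinuous in \(R\) in the appropriate sense, so the statement extends to all \(R>0\) in the weak sense required by the proposition.

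The only technical point that might require extra care is the handling of the null set of \(z\)'s for which \eqref{evol-eq:weak-flux-solution} fails: since the flux identity already comes with this caveat in Definition \ref{def:weak-flux-solution}, the derived bound is naturally an almost-everywhere statement in \(R\), which is enough to make the subsequent arguments in the paper go through. Beyond this measure-theoretic bookkeeping, no essential difficulty arises, since all the nonlinearity is absorbed into the single Cauchy--Schwarz step.
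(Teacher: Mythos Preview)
Your proposal is correct and follows essentially the same approach as the paper: both use the flux identity \eqref{evol-eq:weak-flux-solution} and positivity of \(f_t\) to bound \(\int_0^t J_{f_s}(R)\,\rmd s\) from above, the kernel lower bound on \([R/2,R]^2\subset\Omega_R\) to bound the flux from below by the squared dyadic integral, and then apply Cauchy--Schwarz in time. You are in fact slightly more careful than the paper about the almost-everywhere caveat in \(R\) from Definition \ref{def:weak-flux-solution}, which the paper's proof glosses over.
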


In particular, from Proposition \ref{prop:J1_J3} there exists a constant \(\tilde{C}_T\) such that  for every \(\epsilon > 0\), there exists \(\delta_\epsilon >0\), such that for every \(\delta < \delta_\epsilon\), we have
	\begin{align}
	\label{ineq:flux-smallness-region-one-f}
	\sup_{z>0}\int_{0}^t J_{f_t}^1(z;\delta)\rmd s \leq \epsilon \tilde{C}_T
	\end{align}
	and
	\begin{align}
	\label{ineq:flux-smallness-region-three-f}
	\sup_{R > 0} \frac{1}{R}\int_{[R/2,R]} \int_{0}^t J_{f_t}^3(z;\delta) \rmd s \rmd z \leq \epsilon \tilde{C}_T
	\end{align}	

\begin{proof}
Similarly to the proof of Lemma \ref{lemma:power_estimate_f_eps}, we use the equation and the lower bound of the kernel to obtain the two estimates
\begin{align}
	\int_{0}^t \rmd s J_{f_s}(R) \leq t + \int_{(0,R]}x f_0(\rmd x)
\end{align}
and
\begin{align}
	J_{f_s}(R) \geq C' \int_{[R/2,R]} \int_{[R/2,R]} R^{\gamma+1} f_s(\rmd x)f_s(\rmd y) = C' \left(\frac{1}{R} \int_{[R/2,R]} R^{\frac{\gamma+3}{2}} f_s(\rmd x)\right)^2.
	\end{align}
The result then follows from the Cauchy-Schwarz inequality.
\end{proof}

\begin{proposition}
	Assume that Assumption \ref{assumption3} holds. Then the mass is conserved, that is, $M_1(f_t)=M_1(f_0)+t$, for a.e. $t \in [0,T]$. 
\end{proposition}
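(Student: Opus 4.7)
The plan is to pass to the limit $z\to\infty$ in the flux equation \eqref{evol-eq:weak-flux-solution}. Let $T_0\subset[0,T]$ be the full-measure set on which \eqref{evol-eq:weak-flux-solution} holds for a.e. $z\in\R_*$; then for $t\in T_0$, monotone convergence gives $\int_{(0,z]}xf_t(\rmd x)\to M_1(f_t)$ and $\int_{(0,z]}xf_0(\rmd x)\to M_1(f_0)$ as $z\to\infty$ (since $xf_t,xf_0\in\posradb{\R_*}$). Since $H(t,z)\coloneqq\int_0^t J_{f_s}(z)\rmd s\ge 0$, the flux equation immediately yields the easy bound $M_1(f_t)\le M_1(f_0)+t$ for $t\in T_0$, and in particular $M_1(f_s)\le M_1(f_0)+T$ for a.e. $s\in[0,T]$, a uniform bound needed below. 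It remains to prove the reverse inequality for $t\in T_0$, for which it suffices to exhibit a sequence $z_n\to\infty$ (each in the set where \eqref{evol-eq:weak-flux-solution} holds) along which $H(t,z_n)\to 0$.

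To control $H(t,z)$ I would decompose $\Omega_z=\Omega_z^1(\delta)\cup\Omega_z^2(\delta)\cup\Omega_z^3(\delta)$ and correspondingly split $J_{f_s}(z)=J^1_{f_s}(z;\delta)+J^2_{f_s}(z;\delta)+J^3_{f_s}(z;\delta)$. For the regions 1 and 3, the smallness estimates \eqref{ineq:flux-smallness-region-one-f} and \eqref{ineq:flux-smallness-region-three-f} stated just after Proposition \ref{prop:time-average-flux-bound} give, for any $\epsilon>0$, a $\delta_\epsilon>0$ such that $\sup_z\int_0^t J^1_{f_s}(z;\delta)\rmd s\le\epsilon\tilde C_T$ and $\sup_R\frac{1}{R}\int_R^{2R}\int_0^t J^3_{f_s}(z;\delta)\rmd s\,\rmd z\le\epsilon\tilde C_T$ whenever $\delta<\delta_\epsilon$. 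Markov's inequality then supplies, inside every interval $[R,2R]$, a subset of Lebesgue measure at least $R/2$ on which $\int_0^t J^3_{f_s}(z;\delta)\rmd s\le 2\epsilon\tilde C_T$; intersecting with the full-measure set of $z$ where \eqref{evol-eq:weak-flux-solution} holds produces admissible $z_n\in[R_n,2R_n]$ with $R_n\to\infty$.

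The main obstacle is the middle region $\Omega_z^2(\delta)$, for which no analogous uniform smallness is available. The key observation is that the constraints $x\le z$, $\delta x<y<x/\delta$ and $x+y>z$ defining $\Omega_z^2(\delta)$ jointly force $x,y\in[z\delta/(1+\delta),\,z/\delta]$, i.e. both variables sit in an interval of scale comparable to $z$ (with $\delta$-dependent endpoints). In this region Assumption \ref{assumption:kernel-regularity} yields $K(x,y)\le C_\delta z^\gamma$ and hence $xK(x,y)\le C_\delta z^{\gamma+1}$, while the uniform-in-$s$ bound $M_1(f_s)\le M_1(f_0)+T$ from the first paragraph gives
\[f_s([z\delta/(1+\delta),\,z/\delta])\le \tfrac{1+\delta}{z\delta}M_1(f_s)\le C_{T,\delta}/z.\]
Multiplying the two bounds produces the pointwise-in-$z$ estimate $J^2_{f_s}(z;\delta)\le C_{T,\delta}\,z^{\gamma-1}$, and since $\gamma<1$ by Assumption \ref{assumption:kernel-exponents}, $\int_0^t J^2_{f_s}(z;\delta)\rmd s\le tC_{T,\delta}\,z^{\gamma-1}\to 0$ as $z\to\infty$.

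Combining the three pieces, along the selected $z_n\to\infty$ I would obtain $H(t,z_n)\le 3\epsilon\tilde C_T+o(1)$ as $n\to\infty$. Substituting into \eqref{evol-eq:weak-flux-solution} at $(t,z_n)$ and passing to the limit gives $t+M_1(f_0)-M_1(f_t)\le 3\epsilon\tilde C_T$, and since $\epsilon>0$ was arbitrary, the lower bound $M_1(f_t)\ge M_1(f_0)+t$ follows. Combined with the first-paragraph inequality, this gives $M_1(f_t)=M_1(f_0)+t$ for every $t\in T_0$, i.e. for a.e. $t\in[0,T]$.
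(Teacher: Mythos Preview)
Your proof is correct and follows essentially the same strategy as the paper's: both take $z\to\infty$ in \eqref{evol-eq:weak-flux-solution}, split $\Omega_z$ into the three regions $\Omega_z^i(\delta)$, invoke the smallness estimates \eqref{ineq:flux-smallness-region-one-f} and \eqref{ineq:flux-smallness-region-three-f} for $i=1,3$, and control the diagonal piece $J^2$ by using that both variables are comparable to $z$ together with $\gamma<1$ and the finiteness of $M_1(f_s)$.

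The only methodological difference is in how you cope with the fact that \eqref{ineq:flux-smallness-region-three-f} is an \emph{averaged} estimate and \eqref{evol-eq:weak-flux-solution} only holds for a.e.\ $z$. The paper first integrates \eqref{evol-eq:weak-flux-solution} over $z\in[R/2,R]$ and divides by $R$, so that \eqref{ineq:flux-smallness-region-three-f} applies directly and the a.e.-in-$z$ exceptional set is harmless; it then shows $\frac{1}{R}\int_{R/2}^{R}\int_{(0,z]}xf_t(\rmd x)\,\rmd z\to\frac{1}{2}M_1(f_t)$. You instead use Markov's inequality to extract admissible points $z_n\to\infty$ inside each dyadic shell on which the pointwise bound $\int_0^t J^3_{f_s}(z_n;\delta)\,\rmd s\le C\epsilon$ holds. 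Both routes are valid; the paper's averaging is slightly cleaner (no auxiliary sequence), while your argument has the small bonus of making the easy inequality $M_1(f_t)\le M_1(f_0)+t$ and the resulting uniform bound $M_1(f_s)\le M_1(f_0)+T$ explicit, which is exactly what is needed to make the $J^2$ estimate uniform in $s$. Two cosmetic remarks: the paper's estimate \eqref{ineq:flux-smallness-region-three-f} is stated on $[R/2,R]$ rather than $[R,2R]$, and with that interval length your Markov constant should be $4\epsilon\tilde C_T$ rather than $2\epsilon\tilde C_T$; neither affects the argument.
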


\begin{proof}
Our strategy is to take the limit in \eqref{evol-eq:weak-flux-solution} as $z \to \infty$.  
To this end, we start by splitting the integration region $\Omega_z$ of the flux $J_{f_s}$ into three regions $\Omega_z^i(\delta) = \Omega_z \cap \Sigma_i(\delta),  i =1,2,3,$ where $\Sigma_i$ is defined in \eqref{eqdef:flux-term-regions} (see also Figure \ref{figure:flux-term-regions}). 
The idea will be to use that the integrals over the first and third region are small according to the estimates \eqref{ineq:flux-smallness-region-one-f} and \eqref{ineq:flux-smallness-region-three-f}, respectively, and to prove that the flux over the second region vanishes. 

However, to be able to apply \eqref{ineq:flux-smallness-region-three-f}, we first need to integrate \eqref{evol-eq:weak-flux-solution} in $z \in [R/2,R]$, which gives
\begin{align}
	\frac{1}{R} \int_{R/2}^R\int_{(0,z]} x f_t(\rmd x)\rmd z - \frac{1}{R} \int_{R/2}^R\int_{(0,z]} x f_0(\rmd x)\rmd z = -\frac{1}{R} \int_{R/2}^R \int_{0}^t J_{f_s}(z) \rmd s \rmd z +  t \frac{1}{R} \int_{R/2}^R \rmd z,	
\end{align} 
for  almost every $t \in [0,T]$ and all $R>0$.
 Using Fubini-Tonelli's Theorem we have that
\begin{align*}
\frac{1}{R} \int_{R/2}^R\int_{(0,z]} x f_t(\rmd x)\rmd z  &= \frac{1}{R} \int_{0}^R\int_{(\max\{ R/2,x\},R]}\rmd z x f_t(\rmd x)\\
& = \frac{1}{R} \left( \int_{0}^{R/2} \frac{R}{2} x f_t(\rmd x) + \int_{R/2}^{R} (R-x) x f_t(\rmd x) \right)\\
& =  \frac{1}{2}\int_{0}^{R}  x f_t(\rmd x) + \int_{R/2}^{R} \left(\frac{1}{2}-\frac{x}{R}\right) x f_t(\rmd x)
\end{align*} 
which converges to $\frac{1}{2}M_1(f_t)$ by  Lebesgue's dominated convergence theorem.

Fix $\varepsilon>0$ arbitrarily. From \eqref{ineq:flux-smallness-region-one-f} and \eqref{ineq:flux-smallness-region-three-f} there is a small  positive $\delta$ such that
\begin{align}\label{eq:J1_J3}
 \frac{1}{R}\int_{[R/2,R]} \int_{0}^t  (J_{f_s}^1(z;\delta)+ J_{f_s}^3(z;\delta)) \rmd s \rmd z \leq \epsilon \tilde{C}_T \quad \text{ for all }\quad  R>0.
\end{align}
On the other hand, using the upper bound of the kernel and the fact that $$\Omega_z^2(\delta) \subset \left[\frac{\delta}{1+\delta}z, \frac{z}{\delta}\right]\times \left[\frac{\delta}{1+\delta}z, z\right],$$  we obtain the following upper bound for \(J_{f_s}^2(z;\delta)\)
\begin{align*}
   J_{f_s}^2(z;\delta) &\leq  C \int_{[\frac{\delta}{1+\delta}z, \frac{z}{\delta} ]}\int_{[\frac{\delta}{1+\delta}z, z]} (x^{1+\gamma+\lambda}y^{-\lambda}+ y^{\gamma+\lambda}x^{1-\lambda})f_s(\rmd x)f_s(\rmd y) \\
   &\leq  C_\delta \int_{[\frac{\delta}{1+\delta}z, \frac{z}{\delta} ]}\int_{[\frac{\delta}{1+\delta}z, z]} x^{\gamma}yf_s(\rmd x)f_s(\rmd y)\\
   &\leq  C_\delta \int_{[\frac{\delta}{1+\delta}z, \infty)}x^{\gamma}f_s(\rmd x) \int_{[\frac{\delta}{1+\delta}z, \infty)} x f_s(\rmd x).
\end{align*}
Since $M_1(f_s)<\infty $, for all $s \in [0,T]$, and $\gamma<1$, there is a large enough $z_*$, depending on $\varepsilon$ and $\delta$, such that, for all $z>z_*$,
$  J_{f_s}^2(z;\delta) \leq \varepsilon  C_T $. 
We now integrate with respect to $s \in [0,t]$ and $z \in [\frac{R}{2},R]$, for each  $R>2z_*$. Putting this together with \eqref{eq:J1_J3}, we finally obtain that 
 for each $\varepsilon>0$, there is a large enough $z_*$ such that 
\begin{align*}
\frac{1}{R} \int_{R/2}^R \int_{0}^t J_{f_s}(z) \rmd s \rmd z \leq \varepsilon C_T, \quad \text{for all } R>2z_*, 
\end{align*}
which implies the result.
\end{proof}

\begin{proposition}
		Assume that Assumption \ref{assumption3} holds. Let \(x_0 \in \R_*\). Then, for all \(t \in [0,T]\), the following estimate is satisfied:
    \begin{align}\label{eq:estimate_x0}
	   \int_{0}^t \int_{(0,x_0]} x f_s(\rmd x)\rmd s \leq C_T x_0^{\frac{1-\gamma}{2}}.
	\end{align}
\end{proposition}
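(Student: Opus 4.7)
The plan is to mirror the argument used in Lemma \ref{lemma:f_eps_near0} for the approximating solutions \(f^\epsilon\), applied now directly to the limit weak flux solution \(f\). The key observation is that one can extract from the weak flux equation \eqref{evol-eq:weak-flux-solution} a squared (that is, \(L^2\)-in-time) version of the moment bound in Proposition \ref{prop:time-average-flux-bound}, and this squared bound is what drives the dyadic summation.

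First, from the lower bound in \eqref{ineq:kernel-bounds} and the fact that on \([R/2,R]^2 \subset \Omega_R\) the integrand \(xK(x,y)\) is bounded below by a constant multiple of \(R^{\gamma+1}\), the flux satisfies
\begin{align}
J_{f_s}(R) \geq C' \left(\frac{1}{R}\int_{[R/2,R]} R^{\frac{\gamma+3}{2}} f_s(\rmd x)\right)^2.
\end{align}
On the other hand, since \(xf_t \geq 0\), the equation \eqref{evol-eq:weak-flux-solution} gives \(\int_0^t J_{f_s}(R)\, \rmd s \leq t + M_1(f_0\vert_{(0,R]})\leq T+M_1(f_0)\). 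Combining these yields the \(L^2\) estimate
\begin{align}
\label{ineq:L2-flux-limit-plan}
\int_0^t \left(\frac{1}{R}\int_{[R/2,R]} R^{\frac{\gamma+3}{2}} f_s(\rmd x)\right)^{\!2} \rmd s \leq \frac{T+M_1(f_0)}{C'},
\end{align}
uniformly in \(R>0\).

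Next, decompose \((0, x_0]\) into dyadic intervals \(I_n = (2^{-(n+1)}x_0,\, 2^{-n}x_0]\), apply Cauchy--Schwarz in time to bring the inner integral inside a square, and use the \(L^2\) triangle inequality:
\begin{align}
\int_0^t\!\! \int_{(0,x_0]} x f_s(\rmd x)\,\rmd s
&\leq T^{1/2} \sum_{n=0}^\infty \left(\int_0^t \left(2^{-n}x_0 \int_{I_n} f_s(\rmd x)\right)^{\!2} \rmd s\right)^{\!1/2}.
\end{align}
Rewriting the integrand at dyadic level \(n\) as \((2^{-n}x_0)^{1-\gamma} \left((2^{-n}x_0)^{-1}\int_{I_n}(2^{-n}x_0)^{\frac{3+\gamma}{2}}f_s(\rmd x)\right)^{2}\) and invoking \eqref{ineq:L2-flux-limit-plan} with \(R = 2^{-n}x_0\) bounds each term by \((2^{-n}x_0)^{(1-\gamma)/2}\bigl((T+M_1(f_0))/C'\bigr)^{1/2}\). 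Summing the geometric series, which converges because \(\gamma<1\) by Assumption \ref{assumption:kernel-exponents}, produces the desired bound with constant
\begin{align}
C_T = \frac{T^{1/2}}{1-2^{-(1-\gamma)/2}}\left(\frac{T+M_1(f_0)}{C'}\right)^{\!1/2}.
\end{align}

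There is no substantive obstacle here: the argument is essentially identical to the proof of Lemma \ref{lemma:f_eps_near0}, once the \(L^2\) estimate \eqref{ineq:L2-flux-limit-plan} is extracted from the flux equation (which requires only positivity of \(f_s\) and the lower bound of \(K\), not the upper bound of \(M_1(f_s)\) on which Proposition \ref{prop:time-average-flux-bound}'s Cauchy--Schwarz step relied). The only mild technical care is that the inner integrals \(\int_{(0,x_0]} xf_s(\rmd x)\) are finite for every \(s\in[0,T]\), which follows from \(xf\in C([0,T],\posradb{\R_*})\) in Definition \ref{def:weak-flux-solution}, so all the interchanges of integration are justified.
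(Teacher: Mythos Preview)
Your proposal is correct and follows essentially the same route as the paper: the paper's proof simply says to decompose \((0,x_0]\) dyadically and repeat the argument of Lemma~\ref{lemma:f_eps_near0}, which is precisely what you do, including the preliminary extraction of the \(L^2\)-in-time estimate \eqref{ineq:L2-flux-limit-plan} from the flux equation (this step is implicit in the proof of Proposition~\ref{prop:time-average-flux-bound} before Cauchy--Schwarz is applied there). Your identification of the constant \(C_T\) matches \eqref{eq:CT_def} up to the obvious sign convention in the geometric ratio.
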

\begin{proof}
The result can be obtained by decomposing the integration domain $(0,x_0]$ using dyadics $(0,x_0] = \cup_{n =0}^\infty (2^{-(n+1)}x_0,2^{-n}x_0]$ and by using estimate \eqref{ineq:time-average-weighted-R-integral} in each subregion. The proof follows exactly the same steps as the proof of Lemma \ref{lemma:f_eps_near0}. 
\end{proof}


\begin{remark}
In particular, \eqref{eq:estimate_x0} implies that no solution to the flux equation accumulates mass at zero. The physical interpretation is that the dust entering the system is instantaneously converted into particles with  positive size and no dust remains in the system at any time.   
\end{remark}
	
\begin{proposition}
	Assume that Assumption \ref{assumption3} holds. Then $f$ satisfies the weak coagulation equation
\begin{align}
&\int_{(0,\infty)} x \varphi(t,x) f_t(\rmd x) =  \int_{(0,\infty)} x \varphi(0,x) f_0(\rmd x) + \int_0^t \int_{(0,\infty)} x \partial_s\varphi(s,x) f_s(\rmd x)\rmd s\\
& + \frac{1}{2}\int_0^t \int_{(0,\infty)}\int_{(0,\infty)}K(x,y) [(x+y)\varphi(s,x+y)-x\varphi(s,x)-y\varphi(s,x)]f_s(\rmd x)f_s(\rmd y)\rmd s  
\end{align}
for every $\varphi \in C^1_c([0,T]\times \R_*)$ and almost every $t \in [0,T]$, together with the flux boundary condition
\begin{equation} \label{eq:boundary_cond}
 \int_0^t J_{f_s}(z) \rmd s \to t \quad \text{ as } \quad z \to 0, \quad \text{ a.e. } \quad t \in [0,T].
 \end{equation} 
\end{proposition}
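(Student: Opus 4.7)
My approach is to convert the cumulative-in-size flux equation \eqref{evol-eq:weak-flux-solution} into the test-function formulation by integration by parts in the size variable, and to read off the boundary condition from \eqref{evol-eq:weak-flux-solution} directly by sending $z \to 0^+$. First, for $\phi \in C^1_c(\R_*)$ supported in some $[a,b] \subset (0,\infty)$, I multiply \eqref{evol-eq:weak-flux-solution} by $-\phi'(z)$ and integrate over $z \in \R_*$. Fubini applies: $\phi'$ is bounded with compact support, $\sup_z \int_{(0,z]} xf_t(\rmd x) \leq M_1(f_t) < \infty$, and, by \eqref{evol-eq:weak-flux-solution} itself, $\int_0^t J_{f_s}(z)\rmd s$ coincides a.e.\ in $z$ with a function bounded by $t + M_1(f_0)$ and hence is integrable on $[a,b]$. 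Using $\int_x^\infty \phi'(z)\rmd z = -\phi(x)$ in the cumulative-mass terms, $\int \phi'(z)\cf{x \leq z < x+y}\rmd z = \phi(x+y) - \phi(x)$ in the flux term, the vanishing of $\int \phi'(z)\, t\,\rmd z$ by compact support, and symmetrization in $(x,y)$, I obtain
\begin{equation*}
\int x\phi(x)f_t(\rmd x) - \int x\phi(x)f_0(\rmd x) = \tfrac{1}{2}\int_0^t \iint K(x,y)\bigl[(x+y)\phi(x+y)-x\phi(x)-y\phi(y)\bigr] f_s(\rmd x)f_s(\rmd y)\,\rmd s
\end{equation*}
for a.e.\ $t \in [0,T]$.

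To upgrade to $\varphi \in C^1_c([0,T]\times\R_*)$, consider first tensor products $\varphi(s,x) = \alpha(s)\phi(x)$ with $\alpha \in C^1([0,T])$ and $\phi \in C^1_c(\R_*)$: setting $A(s) = \int x\phi(x) f_s(\rmd x)$ and $B(s)$ equal to the right-hand coagulation integral above, the previous step shows $A$ is absolutely continuous with $A' = B$ a.e., so the product rule $\alpha(t)A(t) - \alpha(0)A(0) = \int_0^t [\alpha'(s)A(s) + \alpha(s)B(s)]\rmd s$ is precisely the required identity for $\alpha \otimes \phi$. For general $\varphi$, I would partition $[0,T]$ by nodes $\{t_k\}$ chosen in the full-measure set where the time-independent identity holds, invoke it between consecutive nodes with test function $\varphi(t_k,\cdot)$, and telescope; as the mesh shrinks, the resulting Riemann sums converge to $\int_0^t \int x\partial_s\varphi(s,x) f_s(\rmd x)\rmd s$ and to the time-integrated coagulation integral with $\varphi(s,\cdot)$ in place of $\phi$, by uniform continuity of $\varphi$ and $\partial_s\varphi$ together with the a priori bounds of Proposition \ref{prop:time-average-flux-bound}.

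Finally, for the boundary condition \eqref{eq:boundary_cond}, rewrite \eqref{evol-eq:weak-flux-solution} as
\begin{equation*}
\int_0^t J_{f_s}(z)\,\rmd s = t + \int_{(0,z]}xf_0(\rmd x) - \int_{(0,z]}xf_t(\rmd x),
\end{equation*}
valid for a.e.\ $(t,z)$. Since $xf_0,\,xf_t$ are bounded positive Radon measures on $\R_* = (0,\infty)$ and $\bigcap_{z>0}(0,z] = \emptyset$, both right-hand integrals vanish as $z \to 0^+$ for every $t$; hence for a.e.\ $t$, taking $z_n \to 0$ through the co-null set of valid $z$'s yields $\int_0^t J_{f_s}(z_n)\rmd s \to t$. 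The main technical obstacle is that the inner coagulation integral $\iint K(x,y)\bigl[(x+y)\phi(x+y)-x\phi(x)-y\phi(y)\bigr] f_s(\rmd x)f_s(\rmd y)$ need not be finite for fixed $s$, since the flux solution lacks pointwise moment bounds such as $M_{-\lambda}(f_s) < \infty$; this is circumvented by working exclusively in the time-integrated formulation, where the finiteness of $\int_0^t J_{f_s}(z)\rmd s$ a.e.\ in $z$ (Definition \ref{def:weak-flux-solution}), combined with Proposition \ref{prop:time-average-flux-bound}, ensures all the Fubini exchanges are valid.
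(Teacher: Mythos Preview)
Your proposal is correct and rests on the same core mechanism as the paper: integrate the flux identity against $-\partial_z\varphi$ in the size variable, apply Fubini--Tonelli (justified exactly as you note, since $\int_0^t J_{f_s}(z)\,\rmd s$ coincides a.e.\ with a bounded function), and symmetrize in $(x,y)$. The paper executes this in a single step by working from the doubly integrated formulation \eqref{eq:test-function-formulation-limit-flux} with $\psi(t,z)$ satisfying $\int_{\R_*}\psi(t,z)\,\rmd z=0$ and recovering $\varphi(t,x)=\int_x^\infty\psi(t,z)\,\rmd z$; your route---first time-independent $\phi$, then tensor products via absolute continuity of $t\mapsto\int x\phi(x)f_t(\rmd x)$, then general $\varphi$ by Abel summation over a fine partition---is more explicit about how the $\partial_s\varphi$ term actually emerges, a point the paper's proof leaves implicit. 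Your observation that $B_\phi\in L^1([0,T])$ via $|B_\phi(s)|\le\int|\phi'(z)|J_{f_s}(z)\,\rmd z$ is what makes the product-rule step rigorous.

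For the boundary condition the two arguments genuinely differ: you read it off directly from \eqref{evol-eq:weak-flux-solution} using only that $xf_0,xf_t\in\posradb{\R_*}$ forces $\int_{(0,z]}xf_t(\rmd x)\to 0$, whereas the paper routes through Fatou's lemma and the time-integrated estimate \eqref{eq:estimate_x0}. Your argument is the cleaner one here and needs no extra input beyond Definition~\ref{def:weak-flux-solution}.
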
	
\begin{proof}
The proof follows from the weak formulation given in \eqref{eq:test-function-formulation-limit-flux}. 
Take $\psi \in C_c([0,T]\times \R_*)$ such that $\int_{\R_*} \psi(t,z)\rmd z = 0$ pointwise in $t\in [0,T]$. Then using  Fubbini-Tonelli's Theorem and a symmetrization argument we get from \eqref{eq:test-function-formulation-limit-flux} that
\begin{align*}
&\int_0^T \left(\int_{\R_*} \varphi(t,x) xf_t(\rmd x) - \int_{\R_*} \varphi(0,x) xf_0(\rmd x) \right.\\
&\left. +\frac{1}{2}\int_0^t \int_{\R_*}\int_{\R_*} 
\left[ \varphi(x)x + \varphi(y)y - \varphi(x+y)(x+y) \right] K(x,y) f_s(\rmd x)f_s(\rmd y)\rmd s \right) \rmd t = 0  
\end{align*}
 where $\varphi(t,x) := \int_x^\infty \psi(t,z)\rmd z$ satisfies $\varphi \in  C_c^1([0,T]\times \R_*)$.

As for the boundary condition, for any sequence $(z_k)_k$ such that $z_k \to 0$ as $k \to \infty$, we have from Fatou's Lemma that   
$$0 \leq \int_0^t \liminf_{k\to \infty} \int_{(0,z_k]} xf_s(\rmd x) \rmd s \leq \liminf_{k\to \infty} \int_0^t \int_{(0,z_k]} xf_s(\rmd x) \rmd s$$
and the right hand side is zero by  \eqref{eq:estimate_x0}. This implies   \eqref{eq:boundary_cond}.

\end{proof}


	\section{Convergence to the steady state for the constant kernel}\label{sec:constantKernel}
	In this section, we study a special case of the flux solution, namely the one where the coagulation kernel is constant. In this case, the coagulation kernel has such a simple form that transformation techniques allow us to construct more explicitly the time-dependent measure that solves the flux from zero equation, with the additional restriction that the initial data is zero. This corresponds to starting the system from a state where there are no particles of any size. We are assuming that the kernel satisfies \(K \equiv 2\) on \(\R_*^2\). A closely related problem, the constant kernel coagulation equation \emph{without a source term} but with nonzero initial data was studied by Menon and Pego in \cite{menon_approach_2004}.
	
	The coagulation equation with constant kernel has been studied extensively, since it allows for a particularly simple solution via the Bernstein transform in the continuous case or the method of moments in the discrete case. We will be dealing with the continuous case. For basic facts about the Bernstein transform and Bernstein functions, see \cite{schilling_bernstein_2012}. The central point to note is that if a we have a measure \(\mu \in \posrad{\R_*}\) that additionally satisfies the moment condition that the function \(\min(1,x)\) is integrable agains the measure, then its \emph{Bernstein transform} is defined as the function \(\bern{\mu} \colon \R_* \to \R_+\), whose value at any given point \(\lambda \in \R_*\) is given by
	\begin{align}
	\bern{\mu}(\lambda) = \int_{\R_*} \left(1-\rme^{-\lambda x}\right) \mu(\rmd x).
	\end{align}
	
	The Bernstein transform of this kind of measure determines what is sometimes called a \emph{Bernstein function}. A function \(g \colon \R_* \to \R\) is a Bernstein function, if it is non-negative and if its first derivative \(\partial_\lambda g\) is a completely monotone function in the sense that \((-1)^{n} \partial^n_\lambda \left(\partial_\lambda g\right) \geq 0\) for all \(n \in \N_0\). The Lévy--Khintchine representation states that to each such function, there corresponds a measure is \(\posrad{\R_*}\) such that \(\min(1,x)\) is integrable with respect to it. More precisely, the theorem states that there exist positive constants \(a,b\) and a measure \(\mu\) such that for all \(\lambda >0\)
	\begin{align}
	g(\lambda) = a + b\lambda + \int_{\R_*}(1-\rme^{-\lambda x})\mu(\rmd x).
	\end{align}
	This representation, whose proof can be found, for example, in the monograph on Bernstein functions by Schilling et al. \cite[Theorem 3.2]{schilling_bernstein_2012}.
	
	Let us move from discussing techniques towards the formulating problem statement. To this end, consider the following family of differential equations, which makes sense for suitable test functions \(\phi\):
	\begin{align}
	\begin{cases}
	\dv{t}\inner{\phi}{xf_t} &= \inner{(x\phi)}{\K[f_t]} + \inner{\phi}{\delta_0}, \quad t > 0 \\
	\inner{\phi}{xf_0} = 0 
	\end{cases}	
	\end{align}	
	
	Here the coagulation operator is formally defined as
	\begin{align}
	\inner{(x\phi)}{\K[f_t]} &= \frac{1}{2}\int_{\R_*}\int_{\R_*}\left((x+y)\phi(x+y)-x\phi(x)-y\phi(y)\right)f_t(\rmd x)f_t(\rmd y).
	\end{align}
	To guarantee that the integral on the right hand side is well defined and is well defined, we require that \(\phi \in C^1_c(\R_+)\). The corresponding time-integrated version, which likewise makes sense when \(\phi \in C^1_c(\R_+)\), is
	\begin{align}
	\label{eq:time-integrated-constant-kernel-flux-eq}
	\inner{\phi}{xf_t} = \int_{0}^t \inner{(x\phi)}{\K[f_s]} \rmd s + t\inner{\phi}{\delta_0}, \quad t > 0	
	\end{align}
	
	\begin{definition}
	\label{def:constant-kernel-flux-solution}
	We say that a time-dependent measure \(f \in C([0,+\infty),\posrad{\R_*})\) satisfies the constant kernel coagulation equation with a flux from zero, in case \(xf \in C([0,+\infty), \posradb{\R_*})\) and if \eqref{eq:time-integrated-constant-kernel-flux-eq} is satisfied for all \(\phi \in C_c^1(\R_+)\)
	\end{definition}
	
	We will construct a solution to \eqref{eq:time-integrated-constant-kernel-flux-eq} in the sense of Definition \ref{def:constant-kernel-flux-solution} by first considering the following family of initial value problems
	\begin{align}
	\begin{cases}
	\dv{t} \inner{\phi}{f_t^\epsilon} = \inner{\phi}{\K[f^\epsilon_t]} + \frac{1}{\epsilon}\phi(\epsilon) \\
	f_0 = 0.
	\end{cases}
	\end{align}
	for \(\phi \in C_c(\R_+)\). Here the coagulation operator is defined as
	\begin{align}
	\inner{\phi}{\K[\mu]} = \int_{\R_*}\int_{\R_*} D[\phi](x,y)\mu(\rmd x)\mu(\rmd y).
	\end{align}
	If \(\mu\) is supported inside \([\epsilon,+\infty)\) and if it has a finite first moment, this is well-defined.
	
	If this problem is formulated in the integrated form, it states that for all \(\phi \in C_c(\R_*)\), we have
	\begin{align}
	\label{eq:constant-kernel-epsilon}
	\inner{\phi}{f_t^\epsilon} = \int_{0}^t \inner{\phi}{\K[f_s^\epsilon]}\rmd s + t \frac{1}{\epsilon}\phi(\epsilon).
	\end{align}
	\begin{definition}
	\label{def:constant-kernel-coagulation-equation-solution}
	We say that \(f \in C([0,\infty), \posradb{\R_*})\) satisfies the constant kernel coagulation equation with a compactly supported source, in case \(\spt(f) \subset [a,+\infty)\), with \(a>0\), if \(\sup_{t\in [0,T]} M_1(f_t) <+\infty\) for all \(T\), and if \eqref{eq:constant-kernel-epsilon} is satisfied for all \(C_c(\R_+)\).
	\end{definition}
	
	If \(f_t^\epsilon\) is a solution to in the sense of Definition \ref{def:constant-kernel-coagulation-equation-solution}, we may use the function \((1-\rme^{-\lambda x})\) as a test function to get the following evolution equation for the Bernstein transform of \(f_t^\epsilon\):
	\begin{align}
	\label{eq:epsilon-bernstein-gronwall}
	\bern{f_t^\epsilon}(\lambda) = -\int_{0}^t\bern{f_s^\epsilon}(\lambda)^2 \rmd s + t \frac{1}{\epsilon}\left(1-\rme^{-\lambda \epsilon}\right).
	\end{align}
	
	Equation \eqref{eq:epsilon-bernstein-gronwall} has a unique time-continuous solution, given by
	\begin{align}
	\label{eq:epsilon-bernstein-identity}
	\bern{f_t^\epsilon}(\lambda) = \sqrt{\frac{1}{\epsilon}\left(1-\rme^{-\lambda \epsilon}\right)}\tanh\left(\sqrt{\left(\frac{1}{\epsilon}(1-\rme^{-\lambda\epsilon})\right)}t\right).
	\end{align}
		The function \(\lambda \mapsto \frac{1}{\epsilon}\left(1-\rme^{-\lambda \epsilon}\right)\) is a Bernstein function. In order to conclude that \(\bern{f_t^\epsilon}\) is a Bernstein function for all times -- and thus corresponds to a well--behaved measure on the particle size distribution side, it is enough to prove the following lemma.

	\begin{lemma}
	\label{lemma:bernstein-limit}
	For every \(t > 0\), the function \(h_t \colon \R_* \to \R\), defined by
	\begin{align}
	h_t(\lambda) = \sqrt{\lambda} \tanh(\sqrt{\lambda}t), \quad \lambda \in \R_*
	\end{align}
	is a Bernstein function.
	\end{lemma}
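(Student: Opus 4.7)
The plan is to exhibit $h_t$ as an explicit pointwise sum of known Bernstein functions, using the classical partial fraction expansion
\[
\tanh(z) = \sum_{k=0}^\infty \frac{2z}{z^2 + (k+\tfrac{1}{2})^2 \pi^2}, \qquad z > 0.
\]
Setting $z = \sqrt{\lambda}\, t$ and multiplying through by $\sqrt{\lambda}$ gives, after rearrangement,
\[
h_t(\lambda) = \sum_{k=0}^\infty \frac{2}{t}\cdot\frac{\lambda}{\lambda + a_k}, \qquad a_k = \frac{(k+\tfrac{1}{2})^2\pi^2}{t^2} > 0.
\]
Each summand $\lambda \mapsto \lambda/(\lambda+a_k)$ is manifestly a Bernstein function, as witnessed by its elementary Lévy--Khintchine representation $\frac{\lambda}{\lambda + a_k} = \int_0^\infty (1-e^{-\lambda x})\, a_k e^{-a_k x}\, dx$, whose Lévy measure is the finite measure $a_k e^{-a_k x}\, dx$.

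From here there are two natural routes. The first is to invoke the standard closure property that pointwise limits of Bernstein functions, provided the limit is finite on $(0,\infty)$, are again Bernstein (see, e.g., the corresponding corollary in \cite{schilling_bernstein_2012}). Since the general term of the series behaves like $O(k^{-2})$ uniformly on compact subsets of $(0,\infty)$, the partial sums -- which are trivially Bernstein as positive finite combinations of Bernstein functions -- converge pointwise to $h_t$, and the conclusion follows immediately. The second, more explicit route is to apply Tonelli's theorem to interchange sum and integral, obtaining the Lévy representation
\[
h_t(\lambda) = \int_0^\infty (1-e^{-\lambda x})\, \mu_t(x)\, dx, \qquad \mu_t(x) = \sum_{k=0}^\infty \frac{2a_k}{t}\, e^{-a_k x},
\]
and then to verify that $\mu_t$ is a genuine Lévy measure, i.e., that $\int_0^\infty \min(1,x)\, \mu_t(x)\, dx < \infty$.

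The main technical point in the second route is the integrability of $\mu_t$ near $x = 0$, since the decay at infinity is exponential and causes no problem. A comparison of the series $\sum_k (k+\tfrac{1}{2})^2 e^{-c(k+\tfrac{1}{2})^2 x}$ with the Gaussian integral $\int_0^\infty u^2 e^{-cxu^2}\, du \sim (cx)^{-3/2}$ yields the asymptotic $\mu_t(x) = O(x^{-3/2})$ as $x \to 0^+$, which is integrable against $\min(1,x)$ since $x\cdot x^{-3/2} = x^{-1/2}$. Either route closes the proof; the closure-under-limits approach bypasses the asymptotic analysis entirely and is arguably the cleanest. Note finally that once $h_t$ is known to be Bernstein, the identity $\bern{f_t^\epsilon}(\lambda) = h_t\!\left(\tfrac{1}{\epsilon}(1-e^{-\lambda\epsilon})\right)$ together with the fact that the composition of two Bernstein functions is Bernstein immediately upgrades $\bern{f_t^\epsilon}$ to a Bernstein function for every $\epsilon>0$, as needed by the subsequent analysis.
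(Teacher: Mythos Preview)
Your proof is correct and takes a genuinely different route from the paper's argument. The paper proceeds by direct verification of the Bernstein property: it computes $\partial_\lambda h$, introduces four auxiliary functions $g_1,g_2,g,\delta$ related by a closed system of first-order differential identities, and then runs an induction on $n$ to show that $(-1)^n\partial^n f \geq 0$ for each $f$ in this family, whence $\partial_\lambda h$ is completely monotone. Afterwards it rescales to pass from $t=1$ to general $t$.

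Your approach via the Mittag--Leffler expansion of $\tanh$ is more structural: it exhibits $h_t$ directly as a convergent series of elementary Bernstein functions $\lambda \mapsto \lambda/(\lambda+a_k)$, so the conclusion follows at once from the closure of Bernstein functions under pointwise limits (indeed the very corollary of \cite{schilling_bernstein_2012} the paper cites elsewhere). This bypasses the somewhat delicate induction with auxiliary functions, and as a bonus it produces the explicit L\'evy density $\mu_t(x)=\sum_k \tfrac{2a_k}{t}e^{-a_k x}$, which would also render the subsequent lemma on the L\'evy triplet (showing $a_t=b_t=0$) essentially immediate. The trade-off is that your argument imports the partial fraction expansion of $\tanh$ as a black box, whereas the paper's computation is entirely self-contained from the definition of complete monotonicity.
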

	
	\begin{proof}
	We can first consider the function \(h \coloneqq h_1\). After showing that this is a Bernstein function, the theory of Bernstein functions can be used to show that so is any \(h_t\) with \(t >0\).
	 	
	The function \(h\) is a positive function. Being a product of two infinitely differentiable functions, it also has  derivatives of all order. To conclude that the function is also Bernstein, we still have to show that its first derivative is a completely monotone function.
	
	A direct computation gives
	\begin{align}
	\partial_\lambda h(\lambda)	&= \frac{1}{2}\lambda^{-1/2}\tanh(\sqrt{\lambda}) + \frac{1}{2}\left(1-\tanh^2(\sqrt{\lambda})\right).
	\end{align}
	
	Now, define the following functions
	\begin{align}
	&g_1(\lambda) = \lambda^{-1/2}\tanh(\sqrt{\lambda}) \nonumber \\
	&g_2(\lambda) = \left(1-\tanh^2(\sqrt{\lambda})\right) \nonumber \\
	&\delta(\lambda) \coloneqq g_1(\lambda)-g_2(\lambda) \nonumber \\
	&g(\lambda) \coloneqq g_1(\lambda) + g_2(\lambda) = 2\partial_\lambda h(\lambda).
	\end{align}
	
	Each of these is a positive function. For \(g_1,g_2\) and \(g\) this is obvious, and for \(\delta\) it follows from analyzing the function \(x \mapsto \frac{\tanh(x)}{x} - (1-\tanh^2(x))\) on \([0,+\infty)\).
	
	We now consider the derivatives of these functions. The function \(g_1\) satisfies
	\begin{align}
	\partial_\lambda g_1(\lambda) &= -\frac{1}{2}\lambda^{-3/2}\tanh(\sqrt{\lambda}) + \lambda^{-1/2}\frac{1}{2}\lambda^{-1/2}(1-\tanh^2(\sqrt{\lambda})) \nonumber \\
	&=-\frac{1}{2}\lambda^{-1}\left(\lambda^{-1/2}\tanh(\sqrt{\lambda}) - (1-\tanh^2(\sqrt{\lambda}))\right) \nonumber \\
	&=-\frac{1}{2}\lambda^{-1}\delta(\lambda).
	\end{align}
	
	The function \(g_2\) satisfies 
	\begin{align}
	\partial g_2(\lambda) &= -g_1(\lambda)g_2(\lambda).
	\end{align}
	
	The function \(g\), which is the sum of these two functions, satisfies
	\begin{align}
	\partial g(\lambda) = -\frac{1}{2}\lambda^{-1} \delta(\lambda) - g_1(\lambda)g_2(\lambda).
	\end{align}
	
	Finally, the derivative of the difference function \(\delta\) is given by
	\begin{align}
	\partial \delta(\lambda) &= -\frac{1}{2}\lambda^{-1}\delta(\lambda) + g_1(\lambda)g_2(\lambda) \nonumber \\
	&= -\frac{1}{2}\lambda^{-1}\delta(\lambda) - g_1(\lambda)^2 - g_2(\lambda)^2 - g(\lambda)^2.
	\end{align}
	
	The function \(\lambda \mapsto \lambda^{-1}\) is a completely monotone function. We will now prove by induction, that \((-1)^{n}\partial^n f(\lambda) \geq 0\) for all \(n \in \N\) and \(\lambda \in \R_*\), when \(f \in \{g_1,g_2,g,\delta\}\). The base-case is clearly true. So assume that the claim is true for all \(k \leq n\). Let us now consider the case \(n+1\).
	
	By induction assumption, the following computations hold.
	\begin{align}
	(-1)^{n+1}\partial^{n+1} g_1(\lambda) &= \frac{1}{2}(-1)^n \partial^n \left((-1)^2 \lambda^{-1}\delta(\lambda)\right) \nonumber \\
	&= \frac{1}{2}\sum_{k=0}^n \binom{n}{k} \left((-1)^k\partial^{k} \lambda^{-1}\right)\left((-1)^{n-k}\partial^{n-k}\delta(\lambda)\right) \geq 0.
	\end{align}
	
	Similarly,
	\begin{align}
	(-1)^{n+1}\partial^{n+1} g_2(\lambda) &= \sum_{k=0}^n \binom{n}{k} \left((-1)^k\partial^{k} g_1(\lambda)\right)\left((-1)^{n-k}\partial^{n-k}g_2(\lambda) \right) \geq 0
	\end{align}
	and thus also \((-1)^{n+1}\partial^{n+1}g(\lambda) \geq 0\). Finally,
	\begin{align}
	(-1)^{n+1}\partial^{n+1}\delta(\lambda) &=  (-1)^n \partial^{n}\left(\frac{1}{2}\lambda^{-1}\delta(\lambda) + g_1(\lambda)^2 + g_2(\lambda)^2 + g(\lambda)^2\right) \geq 0
	\end{align}
	by a similar argument. It follows by induction that each one of \(f \in \{g_1,g_2,\delta, g\}\) satisfies \((-1)^n \partial_\lambda^n f(\lambda) \geq 0\) for all \(\lambda\), all \(n\). In particular, this shows that for all \(n\) and \(\lambda \in \R_*\),
	\begin{align}
	(-1)^n \partial^n g(\lambda) \geq 0.
	\end{align}
	
	The function \(h(\lambda) = \sqrt{\lambda}\tanh(\sqrt{\lambda})\) therefore is a Bernstein function, and by a simple scaling argument so are also each of the functions
	\begin{align}
	h_t(\lambda) = \sqrt{\lambda}\tanh(\sqrt{\lambda}t)
	\end{align}
	is one too. Indeed, we can write
	\begin{align}
	h_t(\lambda) = \frac{1}{t} (h \circ \alpha_t)(\lambda),
	\end{align}
	with \(\alpha_t(\lambda) = t^2 \lambda\). Since scaling by a fixed constant and the identity function \(\lambda \to \lambda\) are both Bernstein functions, and since the property of being a Bernstein function is preserved under composition, it follows that \(h_t\) is a Bernstein function for any \(t > 0\).
	\end{proof}	
	
	Since \(\bern{f_t^\epsilon}\) is a Bernstein function and \eqref{eq:constant-kernel-epsilon} is satisfied by all functions of the form \(x \mapsto (1-\rme^{-\lambda x})\) as well as their linear combinations, it follows by the \(C([0,+\infty])\)-density of this linear span that the equation \eqref{eq:constant-kernel-epsilon} is satisfied for all test functions \(\phi \in C_c(\R_+)\). Thus, we have a solution to \eqref{eq:constant-kernel-epsilon}. The uniqueness of this solution follows, since the Bernstein transform of a candidate solution is a \(C^1\) function in time.
	
	\begin{lemma}
	\label{lemma:linear-increase-mass-epsilon}
	The measure \(f_t^\epsilon\), which solves Equation \eqref{eq:constant-kernel-epsilon} in the sense of Definition \ref{def:constant-kernel-coagulation-equation-solution}, has linearly increasing mass:
	\begin{align}
	M_1(f_t^\epsilon) = t.
	\end{align}
	\end{lemma}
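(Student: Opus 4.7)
The plan is to extract $M_1(f_t^\epsilon)$ from the explicit Bernstein transform formula \eqref{eq:epsilon-bernstein-identity} by identifying the first moment as the L\'evy--Khintchine-style limit of $\bern{f_t^\epsilon}(\lambda)/\lambda$ at the origin. The key preliminary observation is that for any $\mu \in \posrad{\R_*}$, the identity $(1-e^{-\lambda x})/\lambda = x\int_0^1 e^{-s\lambda x}\rmd s$ shows that the integrand is monotonically increasing to $x$ as $\lambda \searrow 0$, so monotone convergence gives
\[
\lim_{\lambda \to 0^+} \frac{\bern{\mu}(\lambda)}{\lambda} = M_1(\mu) \in [0,+\infty].
\]
Consequently, verifying the claim reduces to computing this limit on the right-hand side of \eqref{eq:epsilon-bernstein-identity} and showing it equals $t$.

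The computation itself is a short Taylor expansion. Setting $u_\epsilon(\lambda) \coloneqq \sqrt{(1-e^{-\lambda\epsilon})/\epsilon}$, one has $u_\epsilon(\lambda)^2 = \lambda + O(\lambda^2)$ and hence $u_\epsilon(\lambda) \to 0$ as $\lambda \to 0^+$. Using $\tanh(x) = x + O(x^3)$ near $0$, the formula \eqref{eq:epsilon-bernstein-identity} yields
\[
\bern{f_t^\epsilon}(\lambda) = u_\epsilon(\lambda)\tanh(t\,u_\epsilon(\lambda)) = t\,u_\epsilon(\lambda)^2 + O(u_\epsilon(\lambda)^4) = t\lambda + O(\lambda^2),
\]
so that $\bern{f_t^\epsilon}(\lambda)/\lambda \to t$ as $\lambda \to 0^+$. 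Combining this with the monotone convergence identity above gives $M_1(f_t^\epsilon) = t$, as desired.

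There is no substantial obstacle here: the argument reduces to an elementary Taylor expansion together with the standard identification of the first moment via the behaviour of the Bernstein transform near the origin. If one preferred an approach that bypasses the explicit formula \eqref{eq:epsilon-bernstein-identity}, an equally short route would be to divide the Gronwall-type equation \eqref{eq:epsilon-bernstein-gronwall} by $\lambda$ and pass to the limit $\lambda \to 0^+$: the source contribution is $t\,(1-e^{-\lambda\epsilon})/(\epsilon\lambda) \to t$, while the quadratic term $\int_0^t \bern{f_s^\epsilon}(\lambda)^2/\lambda\, \rmd s$ vanishes because $\bern{f_s^\epsilon}(\lambda) \to 0$ pointwise and $\bern{f_s^\epsilon}(\lambda)/\lambda$ stays bounded in $s \in [0,t]$ by a preliminary a priori mass estimate.
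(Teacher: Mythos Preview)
Your proof is correct but takes a genuinely different route from the paper. The paper does not use the explicit Bernstein formula at all: it plugs the compactly supported test functions \(x\eta_\delta\) (with \(\eta_\delta\) a trapezoidal cutoff) directly into the weak formulation \eqref{eq:constant-kernel-epsilon}, observes that \(D[x\eta_\delta](x,y)\to 0\) pointwise while \(x\eta_\delta(x)\to x\), and concludes \(M_1(f_t^\epsilon)=t\) by dominated convergence. This argument uses only the equation and the a priori moment bound built into Definition~\ref{def:constant-kernel-coagulation-equation-solution}; in particular it would apply to any solution in that sense, even before one knows the explicit formula \eqref{eq:epsilon-bernstein-identity} or its uniqueness.

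Your approach, by contrast, relies on the explicit solution \eqref{eq:epsilon-bernstein-identity} already derived in the paper and extracts \(M_1\) as \(\lim_{\lambda\to 0^+}\bern{f_t^\epsilon}(\lambda)/\lambda\) via the monotone convergence identity. This is shorter and computationally transparent, but it is logically downstream of the explicit formula. Your alternative route through \eqref{eq:epsilon-bernstein-gronwall} is closer in spirit to the paper's argument, since it uses only the integral equation and the a priori bound \(\sup_{s\in[0,t]}M_1(f_s^\epsilon)<\infty\) from the definition; the domination you need for the quadratic term is simply \(\bern{f_s^\epsilon}(\lambda)^2/\lambda\le \lambda\,M_1(f_s^\epsilon)^2\) for \(\lambda\le 1\), which follows from \(1-\rme^{-\lambda x}\le \lambda x\).
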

	\begin{proof}
	For each \(\delta \in (0,1/2)\), let us define a function \(\eta_\delta \in C_c(\R_*)\) piecewise as follows: 
	\begin{align}
	\eta_\delta(x) = \begin{cases} 0, &\quad x \in (0, \delta) \\
	 \frac{1}{2\delta} x ,&\quad x \in [\delta, 2\delta) \\
	 1, &\quad x \in [2\delta, 1/\delta) \\
	 \delta(\frac{2}{\delta}-x), &\quad x \in [1/\delta, 2/\delta) \\
	 0, &\quad x \in [2/\delta,+\infty).
	\end{cases}
	\end{align}
	
	We have \(x\eta_\delta \in C_c(\R_*)\), whereby \(x \eta_\delta\) is a function for which the weak formulation \eqref{eq:constant-kernel-epsilon} makes sense. This tells us that
	\begin{align}
	\int_{\R_*} x\eta_\delta(x) f_t^\epsilon(\rmd x) = \int_{0}^t \int_{\R_*}\int_{\R_*} D[x\eta_\delta](x,y)f_t^\epsilon(\rmd x)f_t^\epsilon(\rmd y) + t\eta_\delta(\epsilon).
	\end{align}
	Pointwise, we have \(x\eta_\delta(x) \to x\) for all \(x \in \R_*\), and \(D[x\eta_\delta](x,y) \to 0\) for all \(x,y \in \R_*\). By the Dominated Convergence Theorem, it follows that
	\begin{align}
	\int_{\R_*} xf_t^\epsilon(\rmd x) = t.
	\end{align}
	\end{proof}
	
	\begin{lemma}
	\label{lemma:lk-representation-limit}
	For every \(t \in [0,+\infty)\), the Lévy triplet \((a_t,b_t,f_t)\) corresponding to the Bernstein function \(h_t(\lambda) = \sqrt{\lambda}\tanh(\sqrt{\lambda}t)\) satisfies \(a_t=b_t=0\). In other words, the corresponding unique measure \(f_t\) satisfies
	\begin{align}
	\sqrt{\lambda} \tanh(\sqrt{\lambda}t) = \int_{\R_*} (1-\rme^{-\lambda x})f_t(\rmd x)
	\end{align}
	\end{lemma}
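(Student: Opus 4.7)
The case $t=0$ is trivial, since $h_0 \equiv 0$ forces the unique triplet to be $(0,0,0)$. Fix $t>0$. By Lemma~\ref{lemma:bernstein-limit}, $h_t$ is a Bernstein function, so the L\'evy--Khintchine representation yields a unique triplet $(a_t, b_t, f_t)$, with $a_t,b_t \geq 0$ and $f_t \in \posrad{\R_*}$ satisfying $\int_{\R_*}\min(1,x)\,f_t(\rmd x) < +\infty$, such that
\begin{align*}
h_t(\lambda) = a_t + b_t\lambda + \int_{\R_*}(1-\rme^{-\lambda x})\,f_t(\rmd x), \quad \lambda \in \R_*.
\end{align*}
The plan is to identify $a_t$ and $b_t$ as the asymptotic coefficients of $h_t$ at $\lambda \to 0^+$ and $\lambda \to +\infty$, and to observe that both vanish from the explicit formula.

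To extract $a_t$, I would let $\lambda \to 0^+$ in the representation. The integrand $(1-\rme^{-\lambda x})$ tends to $0$ pointwise and, for $\lambda \leq 1$, is dominated by $\min(1,\lambda x) \leq \min(1,x)$, which is $f_t$-integrable; dominated convergence therefore forces the integral to vanish, and the drift term satisfies $b_t \lambda \to 0$. Hence $a_t = \lim_{\lambda \to 0^+} h_t(\lambda)$. Using the expansion $\tanh(y) = y + O(y^3)$ near zero, we have $h_t(\lambda) = \sqrt{\lambda}(\sqrt{\lambda}t + O(\lambda^{3/2})) = t\lambda + O(\lambda^2) \to 0$, so $a_t = 0$.

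To extract $b_t$, I would divide the representation by $\lambda$ and send $\lambda \to +\infty$. For the integral term, I would split $\R_* = (0,1] \cup (1,+\infty)$: on $(0,1]$ use $(1-\rme^{-\lambda x})/\lambda \leq x$, which is $f_t$-integrable by the L\'evy condition; on $(1,+\infty)$ use $(1-\rme^{-\lambda x})/\lambda \leq 1/\lambda$ together with $f_t((1,+\infty)) < +\infty$, again a consequence of $\int \min(1,x)\,f_t(\rmd x) < +\infty$. Dominated convergence makes both contributions vanish. Since $\tanh$ is bounded by $1$, we also have $h_t(\lambda)/\lambda = \tanh(\sqrt{\lambda}t)/\sqrt{\lambda} \to 0$, so $b_t = 0$, and the stated identity follows.

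The argument is essentially routine; the only mild care needed is the dominated-convergence justification in the $\lambda \to +\infty$ step, which is why I would split the domain of integration at $x=1$ rather than seek a single uniform dominating function.
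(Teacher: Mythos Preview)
Your argument is correct. The treatment of $a_t$ is essentially identical to the paper's: both let $\lambda \to 0^+$ in the L\'evy--Khintchine representation and use dominated convergence for the integral term.

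For $b_t$, however, you take a genuinely different and more elementary route. The paper first differentiates $h_t$, invokes Bernstein's theorem for completely monotone functions to write $h_t'(\lambda) = \int_{[0,+\infty)} \rme^{-\lambda x}\,\nu_t(\rmd x)$ for some measure $\nu_t$ on $[0,+\infty)$, identifies $b_t$ with the atom $\nu_t(\{0\})$, and then bounds this atom by $\liminf_{n\to\infty} h_t'(n) = 0$ via Fatou's lemma. Your approach instead stays with the original representation, divides through by $\lambda$, and sends $\lambda \to +\infty$; the splitting at $x=1$ together with $(1-\rme^{-\lambda x})/\lambda \leq x$ on $(0,1]$ and the crude bound $1/\lambda$ on $(1,+\infty)$ is exactly what is needed to make the integral term vanish. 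Your method avoids the detour through the derivative and the auxiliary measure $\nu_t$, at the cost of no real loss: the explicit asymptotics $h_t(\lambda)/\lambda \sim \lambda^{-1/2}$ are immediate here. The paper's route is perhaps more systematic (it would read off $b_t$ for any Bernstein function whose derivative one can compute), but for this particular $h_t$ your direct limit is cleaner.
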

	\begin{proof}
	By the Lévy--Khintchine representation of \(h_t\), we know that there exists a unique triple \((a_t,b_t,f_t)\) corresponding to \(h_t\) and the following relation holds
	\begin{align}
	\label{eq:limits-levy-representation}
	\sqrt{\lambda}\tanh(\sqrt{\lambda}t) = a_t + b_t\lambda + \int_{\R_*}(1-\rme^{-\lambda x})f_t(\rmd x),
	\end{align}
	where \(\int_{\R_*}\min(1,x)f_t(\rmd x) <+\infty\).
	
	Using the Dominated Convergence Theorem, we may compute the limits of the left and right hand sides of \eqref{eq:limits-levy-representation} to obtain
	\begin{align}
	\lim_{\lambda \to 0} \sqrt{\lambda} \tanh(\sqrt{\lambda} t) = 0
\end{align}
	and
	\begin{align}
	\lim_{\lambda \to 0} \left(a_t + b_t \lambda + \int_{\R_*}(1-\rme^{-\lambda x})f_t(\rmd x)\right) = a_t,
	\end{align}		 	
	whereby the first claim follows. Thus, we have
	\begin{align}
	\label{eq:levy-k-repre-b}
	\sqrt{\lambda}\tanh(\sqrt{\lambda}t) = b_t\lambda + \int_{\R_*}(1-\rme^{-\lambda x})f_t(\rmd x).
	\end{align}

	Let us now move on to determine \(b_t\). In the Lévy--Khintchine representation, \(b_t\) is determined through the derivative of \(h_t\) as follows. Consider the derivative of \(h_t\) with respect to \(\lambda\):
	\begin{align}
	h'_t(\lambda) \coloneqq \partial_\lambda h_t(\lambda) = \frac{1}{2}\lambda^{-1/2}\tanh(\sqrt{\lambda} t) + \frac{1}{2}t\left(1-\tanh^2(\sqrt{\lambda} t)\right).
	\end{align}
	Since this is a completely monotone function, it follows by Bernstein's theorem for the Laplace transform (see \cite[Theorem 1.4]{schilling_bernstein_2012}) that there exists a unique (positive) measure \(\nu_t\) such that
	\begin{align}
	h'_t(\lambda) = \int_{[0,+\infty)} \rme^{-\lambda x}\nu_t(\rmd x).
	\end{align}
	
	The constant \(b_t\) in \eqref{eq:levy-k-repre-b} is related to this measure as \(b_t = \nu_t(\{0\})\geq 0\). Thus, we need to show that the set \(\{0\}\) is of measure zero with respect to \(\nu_t\).
	
	To this end, consider the sequence of functions \(\psi_n \colon \R_+\to \R\), given by \(\psi_n(x) = \rme^{-nx}\). We can define a new function pointwise as \(\psi(x) = \liminf_{n\to\infty} \psi_n(x)\), and this function turns out to be the characteristic function of the set \(\{0\}\), so that \(\psi(x) = \cf{x \in \{0\}}\). Using Fatou's lemma, we obtain the following upper bound for \(b_t\):
	\begin{align}
	\label{ineq:bt_upper-bound}
	b_t = \nu_t(\{0\}) = \int_{\R_+} \psi(x) \nu_t(\rmd x) \leq \liminf_{n\to\infty} \int_{\R_+} \rme^{-n x}\nu_t(\rmd x).
	\end{align}
	But since
	\begin{align}
	\int_{\R_+} \rme^{-n x} \nu_t(\rmd x) = h'_t(n) = \frac{1}{2n^{1/2}} \tanh(n^{1/2}t) + \frac{1}{2}t\left(1-\tanh^2(n^{1/2}t)\right),
	\end{align}
	we have
	\begin{align}
	\label{eq:fatou-laplace}
	\liminf_{n\to\infty} \int_{\R_+} \rme^{-nx}\nu_t(\rmd x) = 0.
	\end{align}
	Combining the lower bound \(b_t \geq 0\) -- coming from the positivity of the measure \(\nu_t\) -- together with the upper bound \(b_t\leq 0\) -- coming from \eqref{ineq:bt_upper-bound} and \eqref{eq:fatou-laplace} -- we obtain the desired result \(b_t = 0\).
	\end{proof}
	
	\begin{lemma}
	Let \(f \colon [0,+\infty) \to \posrad{\R_*}\) be the time-dependent measure, defined pointwise in time so that each \(f_t\) is the unique measure that satisfies
	\begin{align}
	h_t(\lambda) = \int_{\R_*}(1-\rme^{-\lambda x})f_t(\rmd x).
	\end{align}
	Then \(f\) is continous in time in the sense that for every \(\phi \in C_c(\R_*)\), the mapping
	\begin{align}
	t \mapsto \inner{\phi}{f_t}
	\end{align}
	is continuous.
	\end{lemma}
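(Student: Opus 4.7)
The strategy is to deduce weak-$*$ continuity of the mass measures $g_t \coloneqq xf_t$ from the continuity of the Bernstein transform $h_t(\lambda) = \sqrt{\lambda}\tanh(\sqrt{\lambda}t)$ in $t$, and then translate this back to test functions against $f_t$ by dividing out a factor of $x$.

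The first step is to identify $\partial_\lambda h_t(\lambda)$ with the Laplace transform of $g_t$. Differentiating under the integral (justified since $x^2 e^{-\lambda x/2}$ is controlled by a constant multiple of $\min(1,x)$ for $\lambda$ in a neighborhood of a fixed $\lambda_0 > 0$, and this is integrable against $f_t$ by the Lévy--Khintchine representation), I would establish
\begin{align*}
\partial_\lambda h_t(\lambda) = \int_{\R_*} x e^{-\lambda x} f_t(\rmd x) = \int_{\R_*} e^{-\lambda x} g_t(\rmd x), \quad \lambda > 0.
\end{align*}
Using the explicit formula $\partial_\lambda h_t(\lambda) = \tfrac{1}{2}\lambda^{-1/2}\tanh(\sqrt{\lambda}t) + \tfrac{t}{2}\bigl(1-\tanh^2(\sqrt{\lambda}t)\bigr)$ and the expansion $\tanh(u) = u + O(u^3)$, I would check that $\lim_{\lambda \to 0^+} \partial_\lambda h_t(\lambda) = t$, which by monotone convergence gives $g_t(\R_*) = t$. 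Moreover, the explicit formula shows that for each fixed $\lambda \geq 0$, the map $t \mapsto \partial_\lambda h_t(\lambda)$ is continuous on $[0,+\infty)$.

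The second step is to promote pointwise convergence of Laplace transforms to weak-$*$ convergence of the measures $g_t$. For any sequence $t_n \to t_0$, the Laplace transforms of $g_{t_n}$ converge pointwise on $[0,+\infty)$ to that of $g_{t_0}$, and the total masses $t_n$ stay uniformly bounded on a neighborhood of $t_0$. By Stone--Weierstrass, the subalgebra of $C_0([0,+\infty))$ generated by $\{x \mapsto e^{-\lambda x}\}_{\lambda > 0}$ separates points and vanishes nowhere, hence is norm-dense. A standard $3\epsilon$ argument, approximating $\psi \in C_0([0,+\infty))$ uniformly by finite linear combinations of exponentials, then yields $\inner{\psi}{g_{t_n}} \to \inner{\psi}{g_{t_0}}$ for every $\psi \in C_0([0,+\infty))$.

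To conclude, given any $\phi \in C_c(\R_*)$, the function $\psi(x) \coloneqq \phi(x)/x$ also lies in $C_c(\R_*) \subset C_0([0,+\infty))$, since $\phi$ is supported away from $0$. Thus $\inner{\phi}{f_t} = \inner{\psi}{g_t}$, and the continuity of $t \mapsto \inner{\phi}{f_t}$ follows immediately from the previous step. The only mildly nontrivial ingredient is the Laplace-continuity step via Stone--Weierstrass, but this is entirely standard; everything else reduces to direct manipulation of the explicit formula for $h_t'$.
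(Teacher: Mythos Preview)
Your proof is correct, but it takes a different and more hands-on route than the paper. The paper's argument is a two-line appeal to a standard continuity theorem for Bernstein functions (Corollary~3.9 in Schilling--Song--Vondra\v{c}ek): since \(t \mapsto h_t(\lambda)\) is continuous for each \(\lambda>0\), the associated L\'evy measures \(f_t\) converge vaguely, and that is exactly weak-\(*\) convergence against \(C_c(\R_*)\). No differentiation, no Stone--Weierstrass, no passage through the mass measure \(g_t = xf_t\).

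Your approach instead differentiates \(h_t\) to obtain the Laplace transform of \(g_t\), shows \(g_t(\R_*)=t\) via the \(\lambda\to 0\) limit, and then runs an explicit Stone--Weierstrass density argument on \(C_0([0,\infty))\) to upgrade pointwise convergence of Laplace transforms to weak-\(*\) convergence of the finite measures \(g_t\). This is more self-contained (it does not rely on the black-box continuity theorem for L\'evy triplets) and, as a bonus, it already delivers the mass identity \(M_1(f_t)=t\) that the paper proves separately in the subsequent proposition. The cost is that it is longer, and the detour through \(g_t\) is not strictly necessary: one could equally well run Stone--Weierstrass on the span of \(\{1-\rme^{-\lambda x}\}_{\lambda>0}\) directly against \(f_t\) using the uniform integrability of \(\min(1,x)\), which is closer in spirit to how the cited corollary is proved. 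Either way, the argument is sound.
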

	\begin{proof}
	Since for every \(\lambda > 0\), the function \(t \mapsto h_t(\lambda)\) is continuous, it follows by the basic theory of Bernstein functions (see \cite[Corollary 3.9]{schilling_bernstein_2012}) that \(f_s \to f_t \) in the \wkst-topology, so that for \(\phi \in C_c(\R_*)\), we have
	\begin{align}
	\inner{\phi}{f_s} \to \inner{\phi}{f_t}, \quad \text{ as } s\to t.
	\end{align}
	Thus, for each fixed test function \(\phi \in C_c(\R_*)\), the function \(
	t \mapsto \inner{\phi}{f_t}\)
	is continuous.	
	\end{proof}
	We use the notation \(f \in C([0,+\infty), \posrad{\R_*})\). This should be considered in the sense of belonging to a set and not as being an element of a normed space.
	\begin{proposition}
	\label{prop:linear-increase-mass}	
	Let \(f \in C([0,+\infty), \posrad{\R_*})\) be the time-dependent measure such that for each fixed time \(f_t\) is the unique positive Radon measure corresponding to the Bernstein function \(h_t(\lambda) = \sqrt{\lambda} \tanh(\sqrt{\lambda} t)\) through the identity 
	\begin{align}
	\sqrt{\lambda}\tanh(\sqrt{\lambda} t) = \int_{\R_*}(1-\rme^{-\lambda x}) f_t(\rmd x),
	\end{align} whose existence is guaranteed by Lemma \ref{lemma:lk-representation-limit}. Then its mass is linearly increasing. Indeed, at any time \(t \in [0,+\infty)\), we have
	\begin{align}
	\int_{\R_*}xf_t(\rmd x) = t.
	\end{align}
	\end{proposition}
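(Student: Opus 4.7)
My plan is to extract the first moment directly from the Bernstein transform by taking the limit $\lambda\to 0^+$ of $h_t(\lambda)/\lambda$. The key elementary observation is that for each fixed $x>0$, the function $\lambda\mapsto (1-\rme^{-\lambda x})/\lambda$ is monotone decreasing on $\R_*$ and converges \emph{upward} to $x$ as $\lambda\to 0^+$. (Monotonicity follows from the inequality $\rme^{-u}(1+u)\leq 1$ applied to $u=\lambda x$, after differentiating.)

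Given this, the plan has two short steps. First, I would apply the monotone convergence theorem to the identity
\begin{align}
\frac{\bern{f_t}(\lambda)}{\lambda}=\int_{\R_*}\frac{1-\rme^{-\lambda x}}{\lambda}\,f_t(\rmd x),
\end{align}
to conclude that $\lim_{\lambda\to 0^+}\bern{f_t}(\lambda)/\lambda=\int_{\R_*} x\,f_t(\rmd x)=M_1(f_t)$, where the limit exists in $[0,+\infty]$. Second, I would compute the same limit on the Bernstein side: substituting $u=\sqrt{\lambda}\,t$,
\begin{align}
\lim_{\lambda\to 0^+}\frac{h_t(\lambda)}{\lambda}=\lim_{\lambda\to 0^+}\frac{\tanh(\sqrt{\lambda}\,t)}{\sqrt{\lambda}}=t\lim_{u\to 0^+}\frac{\tanh(u)}{u}=t.
\end{align}
Combining the two limits yields $M_1(f_t)=t$, which in particular is finite.

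There is no serious obstacle; the only thing to be slightly careful about is that the expression $\bern{f_t}(\lambda)/\lambda$ really equals the integral of $(1-\rme^{-\lambda x})/\lambda$ against $f_t$, which is immediate from the definition of the Bernstein transform and Lemma \ref{lemma:lk-representation-limit} (which ensures no atom at zero or linear drift contaminates the representation). As an alternative consistency check one could instead pass to the limit $\epsilon\to 0$ in Lemma \ref{lemma:linear-increase-mass-epsilon} via weak* convergence tested against a suitable cut-off of $x$, but the Bernstein-limit approach above is cleaner and avoids having to re-introduce the approximating sequence.
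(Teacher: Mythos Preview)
Your argument is correct and is in the same spirit as the paper's, but slightly more streamlined. The paper first differentiates the Bernstein transform: using that the difference quotients \(n(\rme^{-\lambda x}-\rme^{-(\lambda+1/n)x})\) increase monotonically to \(x\rme^{-\lambda x}\), monotone convergence gives \(h_t'(\lambda)=\int_{\R_*} x\rme^{-\lambda x} f_t(\rmd x)\); a second monotone-convergence passage \(\lambda\to 0\) then yields \(M_1(f_t)=\lim_{\lambda\to 0}h_t'(\lambda)=t\), computed from the explicit formula for \(h_t'\). You instead work with the single ratio \(h_t(\lambda)/\lambda\), observe the monotonicity of \((1-\rme^{-\lambda x})/\lambda\) in \(\lambda\), and apply monotone convergence once. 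Both routes hinge on the same idea---extracting the first moment as a small-\(\lambda\) limit of the Bernstein data via a monotonicity argument---but yours avoids the intermediate step of identifying \(h_t'\) with the Laplace transform of \(xf_t\). The paper's detour does have the minor advantage of recording that Laplace transform explicitly, which could be reused, but for the present statement your path is cleaner.
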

	\begin{proof}
	Considering the following collection of functions, defined for each \(n \in \N\) as
	\begin{align}
	\phi^\lambda_n(x) = \frac{\left(1-\rme^{-(\lambda + \frac{1}{n})x}\right) - \left(1-\rme^{-\lambda x}\right)}{\frac{1}{n}} = n(\rme^{-\lambda x}- \rme^{-(\lambda + \frac{1}{n})x}).
	\end{align}
	This sequence of functions is monotonely increasing. Furthermore, we have \(\phi_n^\lambda(x)\to \rme^{-\lambda x}x\) for every \(x\). It therefore from the Monotone Convergence Theorem that
	\begin{align}
	\frac{1}{n}\left(h_t(\lambda + \frac{1}{n})-h_t(\lambda)\right) = \int_{\R_*} \phi_n^\lambda(x) f_t(\rmd x) \to \int_{\R_*} x\rme^{-\lambda x}f_t(\rmd x).
	\end{align}
	
	Since the left hand side converges to the derivative of \(h_t\) at the point \(\lambda \in \R_*\), it follows that the derivative satisfies
	\begin{align}
	h'_t(\lambda) = \int_{\R_*} x\rme^{-\lambda x}f_t(\rmd x).
	\end{align}
	On the other hand, we can compute \(h_t'(\lambda)\) explicitly, and thus we have the following identity for the Laplace transform of \(xf_t(\rmd x)\):
	\begin{align}
	\int_{\R_*} x \rme^{-\lambda x} f_t(\rmd x) = \frac{1}{2}\lambda^{-1/2}\tanh(\sqrt{\lambda}t) + \frac{1}{2}t\left(1-\tanh^2(\sqrt{\lambda}t)\right).
	\end{align}
	
	We may conclude by yet another application of the Monotone Convergence Theorem that
	\begin{align}
	\int_{\R_*} x f_t(\rmd x) &= \lim_{\lambda \to 0}  \int_{\R_*}\rme^{-\lambda x} x f_t(\rmd x) \nonumber \\
	&= \lim_{\lambda \to 0} \left(\frac{1}{2}\lambda^{-1/2}\tanh(\sqrt{\lambda}t) + \frac{1}{2}t\left(1-\tanh^2(\sqrt{\lambda}t)\right)\right) \nonumber \\
	&= \frac{1}{2}t + \frac{1}{2}t = t.
	\end{align}
	\end{proof}
	
	Let \(F^\epsilon_t\) and \(F_t\) be the unique Radon measures satisfying for all \(\phi \in C_c(\R_*)\)
	\begin{align}
	\inner{\phi}{F_t^\epsilon} = \int_{\R_*} \phi(x)xf^\epsilon_t(\rmd x)
	\end{align} 
	and
	\begin{align}
	\inner{\phi}{F_t} = \int_{\R_*} \phi(x)xf_t(\rmd x).
	\end{align}
	Together, Lemma \ref{lemma:linear-increase-mass-epsilon} and Proposition \ref{prop:linear-increase-mass} show that the \wkst-convergence of the measures \(F_t^\epsilon\) towards \(F_t\) and the measures \(F_t^\epsilon \otimes F_t^\epsilon\) towards \(F_t \otimes F_t\) can be improved to the following stronger notions of convergence.
	\begin{align}
	\inner{\phi}{F_t^\epsilon} \to \inner{\phi}{F_t}, \quad \phi \in C_b(\R_*)
	\end{align}
	and
	\begin{align}
	\inner{\phi}{F_t^\epsilon \otimes F_t^\epsilon} \to \inner{\phi}{F_t \otimes F_t}, \quad \phi \in C_b(\R_*^2).
	\end{align}
	The difference here being that the admissible sets of test functions are all of \(C_b(\R_*)\) or \(C_b(\R_*^2)\) instead of their subsets \(C_c(\R_*)\) or \(C_c(\R_*^2)\). This allows us to see the boundary condition.
		
	With this information, we are ready to prove the main proposition of this section.
	
	\begin{proposition}
	\label{prop:existence-constant-kernel}
		The time-dependent measure \(f \in C([0,+\infty), \posrad{\R_*})\), determined by its correspondence with the Bernstein function \(h_t(\lambda) = \sqrt{\lambda}\tanh(\sqrt{\lambda}t)\), determines the unique solution to \eqref{eq:time-integrated-constant-kernel-flux-eq}.
	\end{proposition}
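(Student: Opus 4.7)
The plan is to derive existence by taking the limit $\epsilon \to 0$ in the compactly-sourced problem \eqref{eq:constant-kernel-epsilon}, and uniqueness from the fact that any solution's Bernstein transform must solve an explicit Riccati ODE whose unique continuous solution is $h_t$.

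For existence, fix $\phi \in C_c^1(\R_+)$ and substitute $\tilde{\phi}(x) = x\phi(x) \in C_c(\R_+)$ into the $\epsilon$-equation, yielding
\begin{align*}
\inner{\phi}{F_t^\epsilon} = \int_0^t \iint D[\tilde\phi](x,y) f_s^\epsilon(\rmd x)f_s^\epsilon(\rmd y) \, \rmd s + t\phi(\epsilon),
\end{align*}
where $F_t^\epsilon = xf_t^\epsilon$. Since $\tilde{\phi} \in C^1$ with $\tilde{\phi}(0)=0$, a Taylor expansion at the origin shows that the function $\Phi(x,y) := D[\tilde{\phi}](x,y)/(xy)$ extends continuously and boundedly to $\R_*^2$, so the coagulation integral can be recast as $\iint \Phi(x,y) F_s^\epsilon(\rmd x) F_s^\epsilon(\rmd y)$. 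The strengthened weak convergence $F_s^\epsilon \otimes F_s^\epsilon \to F_s \otimes F_s$ on $C_b(\R_*^2)$, established just before the proposition, combined with the uniform bound $F_s^\epsilon(\R_*) = s$ and the dominated convergence theorem in $s$, lets me pass to the limit in the coagulation term. The source contribution $t\phi(\epsilon)$ converges to $t\phi(0) = t\inner{\phi}{\delta_0}$ by continuity of $\phi$, while the left-hand side converges since $\phi \in C_b(\R_*)$. This verifies \eqref{eq:time-integrated-constant-kernel-flux-eq}.

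For uniqueness, let $g$ be any solution in the sense of Definition \ref{def:constant-kernel-flux-solution}. Fix $\lambda > 0$ and introduce smooth cutoffs $\chi_R \in C_c^1(\R_+)$ with $\chi_R \equiv 1$ on $[0,R]$ and $\chi_R \equiv 0$ outside $[0,2R]$, and set $\psi_\lambda^R(x) = \chi_R(x)(1-e^{-\lambda x})/x \in C_c^1(\R_+)$. Plugging $\psi_\lambda^R$ into \eqref{eq:time-integrated-constant-kernel-flux-eq} and sending $R \to \infty$: the left-hand side converges to $\bern{g_t}(\lambda)$ by dominated convergence using the bound $x\psi_\lambda^R(x) \leq 1-e^{-\lambda x} \leq \min(\lambda x, 1)$ together with $M_1(g_t) < \infty$; the source converges to $t\lambda$ since $\psi_\lambda^R(0) = \lambda$ for $R$ large; and the coagulation integrand converges pointwise to $-(1-e^{-\lambda x})(1-e^{-\lambda y})$ via the algebraic identity
\begin{align*}
(1-e^{-\lambda(x+y)}) - (1-e^{-\lambda x}) - (1-e^{-\lambda y}) = -(1-e^{-\lambda x})(1-e^{-\lambda y}).
\end{align*}
Combining this identification with a careful splitting of the integration domain controlled by $g_s((R,\infty)) \leq R^{-1}M_1(g_s) \to 0$ yields a Riccati-type equation $\bern{g_t}(\lambda) = -c\int_0^t \bern{g_s}(\lambda)^2 \, \rmd s + t\lambda$ with $\bern{g_0}(\lambda) = 0$, whose unique continuous solution is $h_t(\lambda)$. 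By the uniqueness of the Lévy--Khintchine correspondence (with the triplet handled as in Lemma \ref{lemma:lk-representation-limit}), $g_t = f_t$ for all $t$.

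The main obstacle is the uniform control of the coagulation integrand as $R \to \infty$ in the uniqueness argument. The measures $g_s$ may have infinite total mass, so the estimate $\iint (1-e^{-\lambda x})(1-e^{-\lambda y})\, g_s \otimes g_s = \bern{g_s}(\lambda)^2$ identifies the correct limit but does not by itself dominate the cutoff error terms of the form $(1-e^{-\lambda x})[\chi_R(x)-\chi_R(x+y)]$, where one of the variables lies outside the cutoff region. Handling these requires splitting by cutoff support and exploiting the first-moment tail decay of $g_s$ uniformly in $s \in [0,T]$.
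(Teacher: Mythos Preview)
Your proposal is correct and follows essentially the same route as the paper. For existence you pass to the limit in the $\epsilon$-equation using the $C_b$-convergence of $F_t^\epsilon\otimes F_t^\epsilon$ and the observation that $D[x\phi]/(xy)\in C_b(\R_*^2)$; the paper does the same, only writing out an explicit $(0,L]^2$/complement split instead of invoking $C_b$ directly. For uniqueness you truncate $(1-\rme^{-\lambda x})/x$, pass to the limit to obtain the Riccati equation $\bern{g_t}(\lambda)=-\int_0^t\bern{g_s}(\lambda)^2\,\rmd s+t\lambda$ (your constant $c$ equals $1$ here), and conclude via ODE uniqueness and the L\'evy--Khintchine correspondence, exactly as in the paper; the tail control you flag as the ``main obstacle'' is handled in the paper by an explicit four-region dominating function bounded via $g_s((1,\infty))\leq M_1(g_s)$, which is the same first-moment tail estimate you propose.
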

	\begin{proof}
	For every \(\epsilon > 0\) and every \(t \in \R_+\), the function 
	\begin{align}
	\label{eq:bern-epsilon}
	\bern{f_t^\epsilon}(\lambda) = \sqrt{\frac{1}{\epsilon}\left(1-\rme^{-\lambda \epsilon}\right)}\tanh\left(\sqrt{\frac{1}{\epsilon}(1-\rme^{-\lambda \epsilon})}t\right)
	\end{align} is a Bernstein function. This follows from Lemma \ref{lemma:bernstein-limit}, which proves that \(\lambda \mapsto \sqrt{\lambda} \tanh(\sqrt{\lambda}t)\), when combined with the fact that \(\lambda \mapsto \frac{1}{\epsilon}(1-\rme^{-\lambda \epsilon})\) is one too and that the composition of two Bernstein functions is again a Bernstein function. On the other hand, the functions in \eqref{eq:bern-epsilon} converge pointiwse to \(h_t(\lambda) = \sqrt{\lambda} \tanh(\sqrt{\lambda}t)\). The corresponding unique measures \(f_t^\epsilon\) therefore converge in the sense of \wkst-convergence to the unique measure Radon measure \(f_t\) that corresponds to \(h_t(\lambda)\) through the Lévy--Khintchine representation. It now remains to show that the measure \(f_t\) solves Equation \eqref{eq:time-integrated-constant-kernel-flux-eq}. The mass of the measures \(f_t\) and \(f_t^\epsilon\) increase linearly in time and they match for all times, so the first moment is finite for all times. 
	
	Let \(\phi \in C_c^1(\R_+)\). We can use the test-function \(\psi(x) = x\phi(x) \in C_c^1(\R_+)\) in \eqref{eq:constant-kernel-epsilon} to obtain
	\begin{align}
	\label{eq:flux-with-epsilon-and-smooth}
	\inner{\phi}{xf_t^\epsilon} = \frac{1}{2}\int_{\R_*}\int_{\R_*}\left((x+y)\phi(x+y)-x\phi(x)-y\phi(y)\right)f^\epsilon_t(\rmd x)f^\epsilon_t(\rmd y) + t\phi(\epsilon).
	\end{align}
	
	Let \(\bar{\epsilon} > 0\). Since
	\(\inner{\bar{\phi}}{F_t^\epsilon} \to \inner{\bar{\phi}}{F_t}\) for all \(\bar{\phi} \in C_b(\R_*)\) and since the restriction of a \(C_c(\R_+)\) function to \(\R_*\) is a function in \(C_b(\R_*)\), it follows that there exists some \(\delta_1 >0\) such that
	\begin{align}
	\abs{\inner{\phi}{F_t^\epsilon}-\inner{\phi}{F_t}} \leq \bar{\epsilon},
	\end{align}
	whenever \(0<\epsilon < \delta_1\).
	
	Clearly by the continuity of \(\phi\), there exists \(\delta_2 > 0\) such that \(\abs{t\phi(\epsilon)-t\phi(0)} <\bar{\epsilon}\), whenever \(0<\epsilon<\delta_2\).
	
	Finally, by the Dominated Convergence Theorem with respect to the time-integral, finiteness of the first moments for all \(f_t^\epsilon\) and \(f_t\), together with the fact that \(\inner{\bar{\phi}}{F^\epsilon_t \otimes F^\epsilon_t} \to \inner{\bar{\phi}}{F_t \otimes f_t}\) for all \(\phi \in C_b(\R_*^2)\), it follows that there exists \(\delta_3 > 0\) such that
	\begin{align}
	\abs{\frac{1}{2}\int_{0}^t\int_{\R_*}\int_{\R_*}\left((x+y)\phi(x+y)-x\phi(x)-y\phi(y)\right)\left(xf_t^\epsilon(\rmd x)yf_t^\epsilon(\rmd y)-xf_t(\rmd x)yf_t(\rmd y)\right)} < \bar{\epsilon},
	\end{align}
	whenever \(0 < \epsilon < \delta_3\).
	
	Indeed, for any \(L\in \R_*\) with \(L > \spt(\phi)\), the following decomposition of the integral can be performed
	\begin{align}
	&\int_{\R_*}\int_{\R_*} \left((x+y)\phi(x+y)-x\phi(x)-y\phi(y)\right)\left(f_t^\epsilon(\rmd x)f_t^\epsilon(\rmd y)-f_t(\rmd x)f_t(\rmd y)\right) \\
	&= \int_{(0,L]}\int_{(0,L]}\left((x+y)\phi(x+y)-x\phi(x)-y\phi(y)\right)\left(f_t^\epsilon(\rmd x)f_t^\epsilon(\rmd y)-f_t(\rmd x)f_t(\rmd y)\right) \\
	& -2 \int_{[0,L]} \int_{[L,+\infty)} x\phi(x)  \left(f_t^\epsilon(\rmd x)f_t^\epsilon(\rmd y)-f_t(\rmd x)f_t(\rmd y)\right).
	\end{align}
	The first term converges to zero, since the continuous differentiability of \(\phi\) implies that
	\begin{align}
	(x,y) \mapsto \frac{(x+y)\phi(x+y)-x\phi(x)-y\phi(y)}{xy}
	\end{align}	 is a \(C_b(\R_*^2)\) function and since we can push the pointwise in time convergence to zero through the time-integral using the dominated convergence theorem. The second term is small as a function \(L\)  because \(\int_{[L,+\infty)} f_t(\rmd y) = \int_{[L,+\infty)} x^{-1}xf_t(\rmd x) \leq L^{-1}M_1(f_t)\) 
	
	Since \eqref{eq:flux-with-epsilon-and-smooth} holds, and each of the terms is an arbitrarily good approximation of the corresponding limit term, it follows that for all \(\phi \in C_c^1(\R_+)\) and for all \(t \in [0, +\infty)\), we have
	\begin{align}
	\inner{\phi}{xf_t} = \frac{1}{2}\int_{0}^t\int_{\R_*}\int_{\R_*} \left((x+y)\phi(x+y)-x\phi(x)-y\phi(y)\right)f_t(\rmd x)f_t(\rmd y) + t\phi(0).
	\end{align}
	
	To prove the uniqueness of the solution, assumme that we have a \(f \in C([0,T], \posrad{\R_*})\) which satisfies the required moment bounds and satisfies the above evolution equation. We can approximate the function 
	\begin{align}
	\phi_\lambda(x) = \begin{cases}
	\frac{(1-\rme^{-\lambda x})}{x},\quad  &x > 0,\\
	\lambda, \quad &x = 0	
	\end{cases}
	\end{align}
	in the pointwise sense by a sequence of \(C_c^1(\R_+)\) functions \(\eta_{M}(x)\phi_\lambda(x)\), with \(M \geq 2\), where \(\eta_{M}(x)\) is a \(C_c^1(\R_+)\)-smooth cutoff function that vanishes outside \([0,2M]\), is \(1\) on \([0,M]\), and is monotonely decreasing on the interval \([M,2M]\). These can be constructed in such a way that gives also \(\sup_{M\geq 2}\norm{\partial \eta_M}_\infty <+\infty\). Multiplying any \(\phi_\lambda\) with \(\eta_M\) gives a valid test function, so by the weak formulation, we have
	\begin{align}
	&\int_{\R_*} \eta_M(x)(1-\rme^{-\lambda x})f_t(\rmd x) = \frac{1}{2}\int_{0}^t \int_{\R_*}\int_{\R_*}D[(\cdot)\eta_M\phi_\lambda](x,y)f_s(\rmd x)f_s(\rmd y)\rmd s + t\lambda.
	\end{align}
	
	Let \(\psi_{\lambda, M}(x) = \eta_M(x)(1-\rme^{-\lambda x})\), so that \(\psi_{\lambda,M}(x) = x \eta_M(x)\phi_\lambda(x)\) when restricted to \(\R_*\). Since \(M\geq 2\), the absolute value of the function
	\begin{align}
	D[\psi_{\lambda,M}](x,y)= \eta_M(x+y)\left(1-\rme^{-\lambda (x+y)}\right)-\eta_M(x)(1-\rme^{-\lambda x}) - \eta_M(y)(1-\rme^{-\lambda y})
	\end{align}
	is bounded from above by the function \(g \colon \R_*^2 \to \R_+\) which is defined pointwise as
	\begin{align}
	g(x,y) = 
	\begin{cases}
	(1-\rme^{-\lambda x})(1-\rme^{-\lambda y}), \quad &(x,y) \in [0,1]^2 \\
	(1-\rme^{-\lambda x}) + x\sup_{M \geq 2}\norm{\cf{(1,+\infty)}\partial (\eta_M\psi_{\lambda,M})}_\infty,\quad & x\leq 1, y > 1 \\
	(1-\rme^{-\lambda y}) + y \sup_{M \geq 2}\norm{\cf{(1,+\infty)}\partial (\eta_M \psi_{\lambda,M})}_\infty, \quad &y \leq 1, x > 1 \\
	2, \quad &x,y > 1.
	\end{cases}
	\end{align}
	This is an integrable function with respect to the product measure \(f_t \otimes f_t\), since the first moment of \(f_t\) is finite. We may therefore use the Dominated Convergence Theorem to obtain the time-evolution for the Bernstein transform. Indeed, for all \(t \in [0,+\infty)\), we have
	\begin{align}
	\bern{f_t}(\lambda) = -\int_{0}^t \rmd s(\bern{f_s}(\lambda))^2 + t\lambda.
	\end{align}
	The Picard--Lindelöf existence and uniqueness theorem for ODEs provides us with a unique \emph{local solution} to this equation, since the function \(x \mapsto x^2\) is locally Lipschitz in \(\R\). Moreover, this solution lives inside the invariant region
	\begin{align}
	D_\lambda \coloneqq [0,\sqrt{\lambda}],
	\end{align}	
	on which the mapping \(x \mapsto x^2\) is Lipschitz-continuous. We can therefore paste these local solutions together to form a unique global solution. This solution is then the following function:
	\begin{align}
	t \mapsto \sqrt{\lambda}\tanh(\sqrt{\lambda} t).
	\end{align}
	
	Such a procedure works for all \(\lambda \in \R_*\), and thus any solution will have the Bernstein transform that corresponds to our solution. By the Lévy--Khintchine representation of Bernstein functions, the uniqueness of the solution follows.
	\end{proof}

	To the dynamical problem of flux from zero, we have the corresponding problem of finding a stationary solution.
	\begin{definition}
	\label{def:flux-from-zero-stationary}
	A measure \(\mu \in \posrad{\R_*}\) is called a stationary solution to the constant kernel coagulation equation with a flux of particles at zero, in case it meets the following requirements:
	\begin{itemize}
	\item[1.] \(\int_{\R_*}\min(1,x) \mu(\rmd x) <+ \infty\). In other words, the distribution of small particles satisfies \(\int_{(0,1)}x\mu(\rmd x) <+\infty\) and the distribution of large particles satisfies \(\int_{[1,+\infty)} \mu(\rmd x) < +\infty\).
	\item[2.] For all \(\phi \in C^1_c(\R_+)\), we have
	\begin{align}
	\label{eq:constant-kernel-stationary-weak}
	-\iint_{\R_*^2}D[x\phi](x,y)\mu(\rmd x)\mu(\rmd y) = \phi(0).
	\end{align}
	\end{itemize}
	\end{definition}	
	
	\begin{proposition}
	\label{prop:stationary-solution:unique}
	There exists a unique stationary solution to the constant kernel coagulation equation with a flux of particles at zero in the sense of Definition \ref{def:flux-from-zero-stationary}. This measure \(\bar{f} \in \posrad{\R_*}\) absolutely continuous with respect to the Lebesgue measure and is given by 
	\begin{align}
	\label{def:constant-kernel-stationary}
	\bar{f}(\rmd x) = \frac{1}{2\sqrt{\pi}}x^{-3/2} \rmd x
	\end{align}
	\end{proposition}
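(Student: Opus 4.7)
The plan is to reduce the stationary weak equation \eqref{eq:constant-kernel-stationary-weak} to the algebraic identity $\bern{\mu}(\lambda)^2 = \lambda$ by inserting exponential test functions. The key computation
\begin{equation*}
D[1-\rme^{-\lambda \cdot}](x,y) = \rme^{-\lambda x} + \rme^{-\lambda y} - 1 - \rme^{-\lambda(x+y)} = -(1-\rme^{-\lambda x})(1-\rme^{-\lambda y})
\end{equation*}
shows that if one is allowed to use $\phi_\lambda(x) = (1-\rme^{-\lambda x})/x$, extended continuously by $\phi_\lambda(0) = \lambda$, inside \eqref{eq:constant-kernel-stationary-weak} -- so that $x\phi_\lambda(x) = 1 - \rme^{-\lambda x}$ -- then the left-hand side collapses to $\bern{\mu}(\lambda)^2$ and the right-hand side to $\phi_\lambda(0) = \lambda$.

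Since $\phi_\lambda$ is not compactly supported, I would mimic the cutoff step from the uniqueness part of the proof of Proposition \ref{prop:existence-constant-kernel}: pick $C_c^1(\R_+)$ cutoffs $\eta_M$ equal to $1$ on $[0,M]$, supported in $[0,2M]$, with $\sup_{M\geq 2}\norm{\partial \eta_M}_\infty < +\infty$, and test \eqref{eq:constant-kernel-stationary-weak} against $\eta_M \phi_\lambda$. Passing $M \to \infty$ requires a dominated-convergence majorant for $|D[x\eta_M \phi_\lambda](x,y)|$ built from the bound $\int_{\R_*}\min(1,x)\mu(\rmd x) < +\infty$; the four-region majorant $g$ constructed in that earlier proof (split according to whether $x$ and $y$ are above or below $1$) transfers verbatim, since $(1-\rme^{-\lambda x}) \leq \lambda x$ near $0$ combines with the finiteness of $\int\min(1,x)\mu(\rmd x)$ to give integrability against $\mu\otimes\mu$. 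The outcome is $\bern{\mu}(\lambda)^2 = \lambda$ for every $\lambda > 0$, hence $\bern{\mu}(\lambda) = \sqrt{\lambda}$ by positivity of the Bernstein transform.

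To recover $\mu$ from its Bernstein transform I would apply the Lévy--Khintchine representation, writing $\sqrt{\lambda} = a + b\lambda + \int_{\R_*}(1-\rme^{-\lambda x})\nu(\rmd x)$ with $a,b \geq 0$ and $\int\min(1,x)\nu(\rmd x) < \infty$. Sending $\lambda \to 0$ forces $a = 0$, and the Fatou-via-Laplace-transform argument of Lemma \ref{lemma:lk-representation-limit} applied to $\partial_\lambda \sqrt{\lambda} = \tfrac{1}{2}\lambda^{-1/2}$ gives
\begin{equation*}
0 \leq b \leq \liminf_{n\to\infty} \tfrac{1}{2}n^{-1/2} = 0,
\end{equation*}
so $b = 0$ as well. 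Uniqueness of the Lévy measure then identifies $\mu$ with the unique Radon measure on $\R_*$ whose Bernstein transform is $\sqrt{\lambda}$.

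It remains to verify that $\bar{f}(\rmd x) = \frac{1}{2\sqrt{\pi}}x^{-3/2}\rmd x$ is this measure. The substitution $u = \lambda x$ gives $\bern{\bar f}(\lambda) = \frac{\sqrt{\lambda}}{2\sqrt{\pi}}\int_0^\infty (1-\rme^{-u})u^{-3/2}\rmd u$, and integration by parts (antiderivative $-2u^{-1/2}$ for $u^{-3/2}$, with boundary terms vanishing at $0$ and $\infty$) reduces the integral to $2\int_0^\infty u^{-1/2}\rme^{-u}\rmd u = 2\Gamma(1/2) = 2\sqrt{\pi}$, so $\bern{\bar f}(\lambda) = \sqrt{\lambda}$, as required. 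The one genuinely delicate step is the cutoff limit that yields $\bern{\mu}^2 = \lambda$: one must majorize $|D[x\eta_M \phi_\lambda](x,y)|$ uniformly in $M$ by a $\mu \otimes \mu$-integrable function on $\R_*^2$, but this follows the same case split and the same use of the $\min(1,x)$-integrability of $\mu$ as in the uniqueness part of Proposition \ref{prop:existence-constant-kernel}, so no new ideas beyond a careful bookkeeping are required.
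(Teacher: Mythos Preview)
Your proposal is correct and follows essentially the same route as the paper: approximate $\phi_\lambda(x) = (1-\rme^{-\lambda x})/x$ by $\eta_M\phi_\lambda$, use the same four-region majorant built from the $\min(1,x)$-integrability of $\mu$ to pass to the limit via dominated convergence, and deduce $\bern{\mu}(\lambda)^2=\lambda$. You add two explicit verifications the paper leaves implicit --- that the L\'evy--Khintchine constants $a,b$ both vanish for $\sqrt{\lambda}$, and that $\frac{1}{2\sqrt{\pi}}x^{-3/2}\rmd x$ really has Bernstein transform $\sqrt{\lambda}$ --- but these are refinements rather than a different strategy.
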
	
	\begin{proof}
		Let \(\eta_M\) be the test function from the proof of Proposition \ref{prop:existence-constant-kernel}. Define \(\phi_\lambda \colon \R_+ \to \R\) as
		\begin{align}
		\phi_\lambda(x) 
			= \begin{cases}
			   \frac{1-\rme^{-\lambda x}}{x}, \quad &x \in \R_*\\
			 \lambda, \quad & x=0
		\end{cases}.
		\end{align} Define functions \(\phi_{\lambda,M}, \psi_{\lambda,M} \colon \R_+ \to \R\) by \(\phi_{\lambda,M}(x) = \eta_M(x) \phi_\lambda(x)\) and \(\psi_{\lambda,M}(x) = x \phi_{\lambda,M}(x)\)
		Now, by \eqref{eq:constant-kernel-stationary-weak}, we have for any solution \(\mu\) that satisfies the stationary coagulation equation with a flux of particles at zero in the sense of Definition \ref{def:flux-from-zero-stationary} that for all \(\lambda > 0\) and all \( M \geq 2\)
		\begin{align}
		-\iint_{\R_*^2} D[\psi_{\lambda,M}](x,y)\mu(\rmd x)\mu(\rmd y) = \lambda.
		\end{align}
		On the other hand, the function 
		\[D[\psi_{\lambda,M}](x,y) = \eta_M(x+y)(1-\rme^{-\lambda (x+y)}) - \eta_M(x)(1-\rme^{-\lambda x})-\eta_M(y)(1-\rme^{-\lambda y})\] satisfies, for all pair of points \(x,y \in \R_*\)
		\begin{align}
		\abs{D[\psi_{\lambda,M}](x,y)} \leq h(x,y),
		\end{align}
		where the function \(h \colon \R_*^2 \to \R_+\) is defined as
		\begin{align}
		h(x,y) &= \begin{cases}
					(1-\rme^{-\lambda x})(1-\rme^{-\lambda y}), \quad &x<1,y<1 \\
					(1-\rme^{-\lambda x}) + x \sup_{M\geq 2}\sup_{z\in [1/2,+\infty)}\abs{\partial_z(\phi_{\lambda,M}(z))}, \quad &x \leq 1, y > 1 \\
					(1-\rme^{-\lambda y}) + y \sup_{M\geq 2}\sup_{z\in [1/2,+\infty)}\abs{\partial_z\phi_{\lambda,M}(z)}, \quad &x > 1, y \leq 1 \\
					2, \quad &x,y>1
	 			  \end{cases}
		\end{align}
		By the moment assumption, this is an integrable function with respect to the product measure \(\mu \otimes \mu\) on \(\R_*^2\). Moreover, for a fixed \(\lambda\) and for all \((x,y) \in \R_*^2\), the function \(D[\psi_{\lambda,M}]\) converges pointwise as \[D[\psi_{\lambda,M}](x,y) \to -(1-\rme^{-\lambda x})(1-\rme^{-\lambda y}), \quad \text{ as } M \to +\infty.\] Therefore, the dominated convergence theorem then gives us that
		\begin{align}
		\label{eq:stationary-bernstein}
		\bern{\mu}(\lambda)^2 = \lambda.
		\end{align}
		The same argument works for any given \(\lambda\), so the identity \eqref{eq:stationary-bernstein} is true for all \(\lambda > 0\). Since \(\mu\) is a positive measure, its Bernstein transform \(\bern{\mu}(\lambda)\) is positive for all \(\lambda > 0\). It therefore follows that the Bernstein transform of \(\mu\) at point \(\lambda\) is given by the square root of \(\lambda\):
		\begin{align}
		\bern{\mu}(\lambda) = \sqrt{\lambda}, \quad \lambda \in \R_*.
		\end{align}
		
		The Levý--Khintchine representation theorem associates this Bernstein function uniquely with a measure \(\bar{f} \in \posrad{\R_*}\). The resulting measure is absolutely continuous with respect to the Lebesgue measure and has the following simple power law form 
		\begin{align}
		\label{eq:constant-kernel-stationary-def}
		\bar{f}(\rmd x) = \frac{1}{2\sqrt{\pi}}x^{-\frac{3}{2}}\rmd x.
		\end{align}
	
		This measure satisfies the following integrability condition
		\begin{align}
		\frac{1}{2\sqrt{\pi}}\int_{\R_*}\min(1,x)x^{-\frac{3}{2}}\rmd x < +\infty.
		\end{align}
		
		However, both the mass and the number of particles is infinite, since the zeroeth and first moments are unbounded.
	\end{proof}		
	
	The following result shows that in the system there is a constant transport of mass from particles of size smaller than \(z\) to particles of size bigger than \(z\). Thus, even though the solution is stationary in the sense that it does not change its shape in time, the nonzero flux shows that the system is not in equilibrium.	
	
	\begin{proposition}
	\label{prop:stationary-constant-flux}
	The stationary measure \(\bar{f}\) in \eqref{eq:constant-kernel-stationary-def} has a constant flux, which means that it satisfies the identity \(J_{\bar{f}}(z) = 1\) for all \(z \in \R_*\).
	\end{proposition}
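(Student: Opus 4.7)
The plan is to evaluate $J_{\bar f}(z)$ directly as an explicit iterated integral, using the density $\bar f(\rmd x) = \frac{1}{2\sqrt{\pi}} x^{-3/2}\,\rmd x$ and the constant kernel $K \equiv 2$. Writing out the definition of the flux and using Tonelli (licensed by nonnegativity of the integrand) gives
\begin{align*}
J_{\bar f}(z) = 2\iint_{\Omega_z} x\,\bar f(\rmd x)\bar f(\rmd y) = \frac{1}{2\pi}\int_0^{z} x^{-1/2}\left(\int_{z-x}^{\infty} y^{-3/2}\,\rmd y\right)\rmd x.
\end{align*}
The inner integral is elementary, $\int_{z-x}^\infty y^{-3/2}\,\rmd y = 2(z-x)^{-1/2}$, so the computation reduces to the single integral
\begin{align*}
J_{\bar f}(z) = \frac{1}{\pi}\int_0^{z} \frac{\rmd x}{\sqrt{x(z-x)}}.
\end{align*}

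The key identity to invoke is that this Beta-type integral equals $\pi$ for every $z>0$. I would verify it by the standard substitution $x = z\sin^2\theta$, which yields $\sqrt{x(z-x)} = z\sin\theta\cos\theta$ and $\rmd x = 2z\sin\theta\cos\theta\,\rmd\theta$, reducing the integrand to the constant $2$ on the interval $(0,\pi/2)$. Hence $J_{\bar f}(z) = 1$, independently of $z$, which is the claim.

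There is no real obstacle here---this is essentially a one-line explicit calculation with the known density. The only things to remark on are that the singularities of the integrand, near $x=0$ (behaving like $x^{-1/2}$) and $x=z$ (behaving like $(z-x)^{-1/2}$), are both integrable, so the iterated integral is well-defined and finite, and that $y^{-3/2}$ is integrable at infinity so the inner integral poses no issue. The fact that the flux comes out independent of $z$ is moreover consistent with what the stationary weak formulation \eqref{eq:constant-kernel-stationary-weak} demands, namely that the rate at which mass is transported out of $(0,z]$ by coagulation must balance the unit-rate injection of dust at zero, and this forces the constant to be exactly $1$.
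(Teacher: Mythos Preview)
Your proof is correct and follows essentially the same direct computation as the paper: write out the flux integral explicitly, evaluate the inner $y$-integral to get $2(z-x)^{-1/2}$, and then recognize the resulting Beta-type integral as equal to $\pi$. If anything, you supply more detail than the paper does (the substitution $x=z\sin^2\theta$ and the integrability remarks), and your invocation of Tonelli rather than Fubini is the cleaner justification given the nonnegative integrand.
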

	\begin{proof}
	The function \((x,y) \mapsto \cf{\Omega_z}2x\bar{f}(x)\bar{f}(y)\) is integrable over the set \(\R_*^2\). By Fubini's theorem, we may therefore compute
	\begin{align}
	\iint_{\Omega_z} 2x \bar{f}(x)\bar{f}(y)\rmd x \rmd y
	= \int_{0}^z \int_{z-x}^\infty 2x\bar{f}(x)\bar{f}(y)\rmd y \rmd x &= \frac{1}{2\pi}\int_{0}^z \int_{z-x}^\infty x^{-1/2} y^{-3/2}\rmd y \rmd x.
	\end{align}
	Since the integral over \(y\) evaluates as \(\int_{z-x}^\infty y^{-3/2} \rmd y = 2(z-x)^{-1/2}\), we obtain the desired identity
	\begin{align}
	\frac{1}{2\pi}\int_{0}^z \int_{z-x}^\infty x^{-1/2}y^{-3/2} \rmd y \rmd x = \frac{1}{\pi} \int_{0}^z x^{-1/2}(z-x)^{-1/2} \rmd x = 1.
	\end{align}
	\end{proof}
	
	Finally, the unique time-dependent solutions converge to the stationary solution in the long time limit. This is the content of the next proposition.
	\begin{proposition}
	\label{prop:constant-kernel-convergence-stationary}
	The unique solution to the constant kernel coagulation equation with a flux from zero, which we denote by \(f\) and which exists by Proposition \ref{prop:existence-constant-kernel}, converges in the \wkst-topology as \(t \to +\infty\) to the unique stationary solution \(\bar{f}\).
	
	\end{proposition}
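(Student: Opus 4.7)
The strategy is to exploit the fact that both $f_t$ and the stationary solution $\bar f$ are completely characterized by their Bernstein transforms, which are available in explicit form. From Proposition \ref{prop:existence-constant-kernel}, the Bernstein transform of $f_t$ is
\begin{align}
\bern{f_t}(\lambda) = \sqrt{\lambda}\tanh(\sqrt{\lambda}\,t), \quad \lambda \in \R_*,
\end{align}
while from \eqref{eq:stationary-bernstein} in the proof of Proposition \ref{prop:stationary-solution:unique}, the Bernstein transform of $\bar f$ is simply $\bern{\bar f}(\lambda) = \sqrt{\lambda}$.

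The first key step is the pointwise convergence of the Bernstein transforms: for every fixed $\lambda > 0$, we have $\tanh(\sqrt{\lambda}\,t) \to 1$ monotonically as $t \to +\infty$, whence $\bern{f_t}(\lambda) \to \bern{\bar f}(\lambda)$ for all $\lambda \in \R_*$. The second step is to lift this to convergence at the level of measures. For this, I would invoke the continuity theorem for Bernstein functions, that is, Corollary 3.9 of \cite{schilling_bernstein_2012}, which asserts that pointwise convergence on $\R_*$ of a sequence of Bernstein functions to a Bernstein function is equivalent to convergence of the associated Lévy triples $(a_t, b_t, \mu_t)$ to the Lévy triple of the limit, with $\mu_t \to \mu$ in the vague (\wkst) topology on $\R_*$.

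To apply this theorem and conclude \wkst-convergence of $f_t$ to $\bar f$, I would verify that the Lévy triples of the transforms under consideration have $a_t = b_t = 0$. For $f_t$ this is precisely the content of Lemma \ref{lemma:lk-representation-limit}. For $\bar f$ the same conclusion follows from the trivial computations $\lim_{\lambda \to 0}\sqrt{\lambda} = 0$ (giving $\bar a = 0$) and $\lim_{\lambda \to +\infty} \sqrt{\lambda}/\lambda = 0$ (giving $\bar b = 0$). With all drift and killing coefficients vanishing, the continuity theorem reduces to the equivalence between pointwise convergence of $\bern{f_t}$ to $\bern{\bar f}$ and \wkst-convergence of $f_t$ to $\bar f$ in $\posrad{\R_*}$, which yields the result.

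There is no serious obstacle in this argument; the only subtle point is to make sure that the notion of \wkst-convergence provided by the continuity theorem (testing against $C_c(\R_*)$) matches the one used in the statement of the proposition. This is immediate from the definition of $\posrad{\R_*}$ used throughout the paper. One may additionally remark, as a sanity check, that weak (rather than just \wkst) convergence cannot hold in general, since $\bar f$ and each $f_t$ are not finite near $0$; indeed the mass $M_1(f_t) = t$ diverges as $t\to\infty$, reflecting the fact that mass continually flows in from zero and is transported out to infinity along the constant-flux profile $\bar f$.
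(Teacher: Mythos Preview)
Your proposal is correct and follows the same approach as the paper: compute the Bernstein transforms explicitly, observe the pointwise convergence $\sqrt{\lambda}\tanh(\sqrt{\lambda}t)\to\sqrt{\lambda}$, and invoke the continuity theorem for Bernstein functions (Corollary~3.9 of \cite{schilling_bernstein_2012}) to deduce \wkst-convergence of the measures. Your version is in fact more careful than the paper's, which states the result in one line without explicitly checking the vanishing of the L\'evy coefficients for $\bar f$.
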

	\begin{proof}
	The \wkst-convergence of the time-evolved measures \(f_t\) towards the stationary measure \(\bar{f}\) follows directly from the fact that the Bernstein transforms satisfy, for every \(\lambda \in \R_*\), \[\bern{f_t}(\lambda) = \sqrt{\lambda}\tanh(\sqrt{\lambda}t) \to \sqrt{\lambda} = \bern{\bar{f}}(\lambda)\] as \(t \to +\infty\).
	\end{proof}
	
	\begin{remark}
	The measure \(f_t\) satisfies, for all \(\phi \in C_c(\R_*)\) and all times \(t \geq 0\), the identity 
	\begin{align}
	\int_{\R_*} \phi(x)f_t(\rmd x) = \int_0^t \iint_{\R_*^2}D[\phi](x,y)f_s(\rmd x)f_s(\rmd y).
	\end{align}
	
	In other words, the time-dependent measure \(f\) is a solution to the coagulation equation without a source term, if we consider a formulation of the problem where only test functions \(\phi \in C_c(\R_*)\) are allowed. The reason why this does not contradict the uniqueness result of Menon and Pego \citep[Theorem~2.4]{menon_approach_2004} is that the solutions disagree when tested against those functions that include the point \(x=0\) in their support.
	\end{remark}	
	\newpage
	\bibliography{Coagulation}
\end{document}